\documentclass[article]{aiaa-pretty}    

\usepackage{amssymb}
\usepackage{amsmath}
\usepackage{booktabs}
 \usepackage{varioref}
 \usepackage{wrapfig}
 \usepackage{threeparttable}
 \usepackage{dcolumn}
  \newcolumntype{d}{D{.}{.}{-1}}
 \usepackage{nomencl}
  \makeglossary
 \usepackage{subfigure}
 \usepackage{graphicx}
 \usepackage{subfigmat}
 \usepackage{fancyvrb}
  \fvset{fontsize=\footnotesize,xleftmargin=2em}
 \usepackage{lettrine}
\usepackage{cite}
\usepackage{enumitem}

  \usepackage{letltxmacro}
\LetLtxMacro{\originaleqref}{\eqref}
\renewcommand{\eqref}{Eq.~\originaleqref}

\newtheorem{theorem}{Theorem}

\newtheorem{remark}{Remark}
\newtheorem{lemma}{Lemma}
\newtheorem{definition}{Definition}

\newenvironment{proof}{{\it Proof. }}{\hfill $\Box$}
\newtheorem{corollary}{Corollary}
\newtheorem{example}{Example}

\newcommand{\Real}{\mathbb R}
\newcommand{\set}[1]{\left\{#1\right\}}
\newcommand{\real}[1]{{\mathbb R}^{#1}}
\newcommand{\bd}{{\boldsymbol d}}
\newcommand{\be}{{\boldsymbol e}}
\newcommand{\bff}{{\boldsymbol f}}
\newcommand{\bg}{{\boldsymbol g}}

\newcommand{\bk}{{\boldsymbol k}}

\newcommand{\bp}{{\boldsymbol p}}

\newcommand{\bu}{{\boldsymbol u}}

\newcommand{\bx}{\boldsymbol x}
\newcommand{\by}{{\boldsymbol y}}

\newcommand{\bxf}{{\bx(\cdot)}}  

\newcommand{\buf}{{\bu(\cdot)}}  

\newcommand{\bP}{{\boldsymbol P}}

\newcommand{\bzero}{{\bf 0}}

\newcommand{\bnu}{{\mbox{\boldmath $\nu$}}}

\newcommand{\bpsi}{{\mbox{\boldmath $\psi$}}}
\newcommand{\blam}{{\mbox{\boldmath $\lambda$}}}

\newcommand{\bxi}{{\mbox{\boldmath $\xi$}}}

\newcommand{\bchi}{\mbox{\boldmath$\chi$}}

\newcommand{\bzeta}{\mbox{\boldmath$\zeta$}}
\newcommand{\etab}{\mbox{\boldmath$\eta$}}
\newcommand{\bsigma}{\mbox{\boldmath$\sigma$}}
\author{ %
I. M. Ross\thanks{Distinguished Professor, Department of Mechanical and Aerospace Engineering. 
}
\\
\textit{Naval Postgraduate School, Monterey, CA 93943}
}
\title{Transversality Conditions for Boundary Constraints Defined by Differential Equations}

\abstract{
What are the transversality conditions for an optimal control problem when the boundary conditions are defined by differential equations?  This seemingly bizarre question is motivated by trajectory optimization problems in the $N$-body system. The question, however, is more fundamental and goes beyond problems in astrodynamics to nonintegrable dynamical systems in general. The main contribution of this paper is the development of generic initial- and final-time transversality conditions for optimal control problems whose boundary conditions are defined in terms of differential equations with side conditions. The mathematical definition of differential boundary conditions are part of the foundations developed in this paper.  To support the new fundamentals, the concept of coordinated/uncoordinated clock times and weak adjoint covectors are introduced. In the case of uncoordinated clock times, the new transversality conditions reveal that there exists a special situation where a weak adjoint covector is orthogonal to the vector field of the boundary differential equation.  This condition is sharply different from the classical statement of orthogonality with respect to the endpoint manifold. The theorems developed in this paper are generic.  An application of the theorems to several cases in the three-body problem are described in separate papers. 
}

\begin{document}
\maketitle


\section{A Motivating Problem}
\label{sec:Intro}
We begin with a suite of practical astrodynamics problems for the sole purpose of providing a  context  for an unconventional mathematical formulation of boundary conditions. In terms of the specific objectives of this paper, it is not necessary for the reader to have any knowledge of astronautics.  With this perspective in mind, consider the problem of optimally transferring a spacecraft from an initial orbit to a final orbit in the restricted three-body problem\cite{Farq-1970,szebehely_theory_1967}.  Candidate initial and final orbits are depicted in Fig.~\ref{fig:boundaryOrbits}.
%
%
\begin{figure}[h!]
      \centering
      {\parbox{0.75\columnwidth}{
      \centering
      {\includegraphics[width = 0.75\columnwidth]{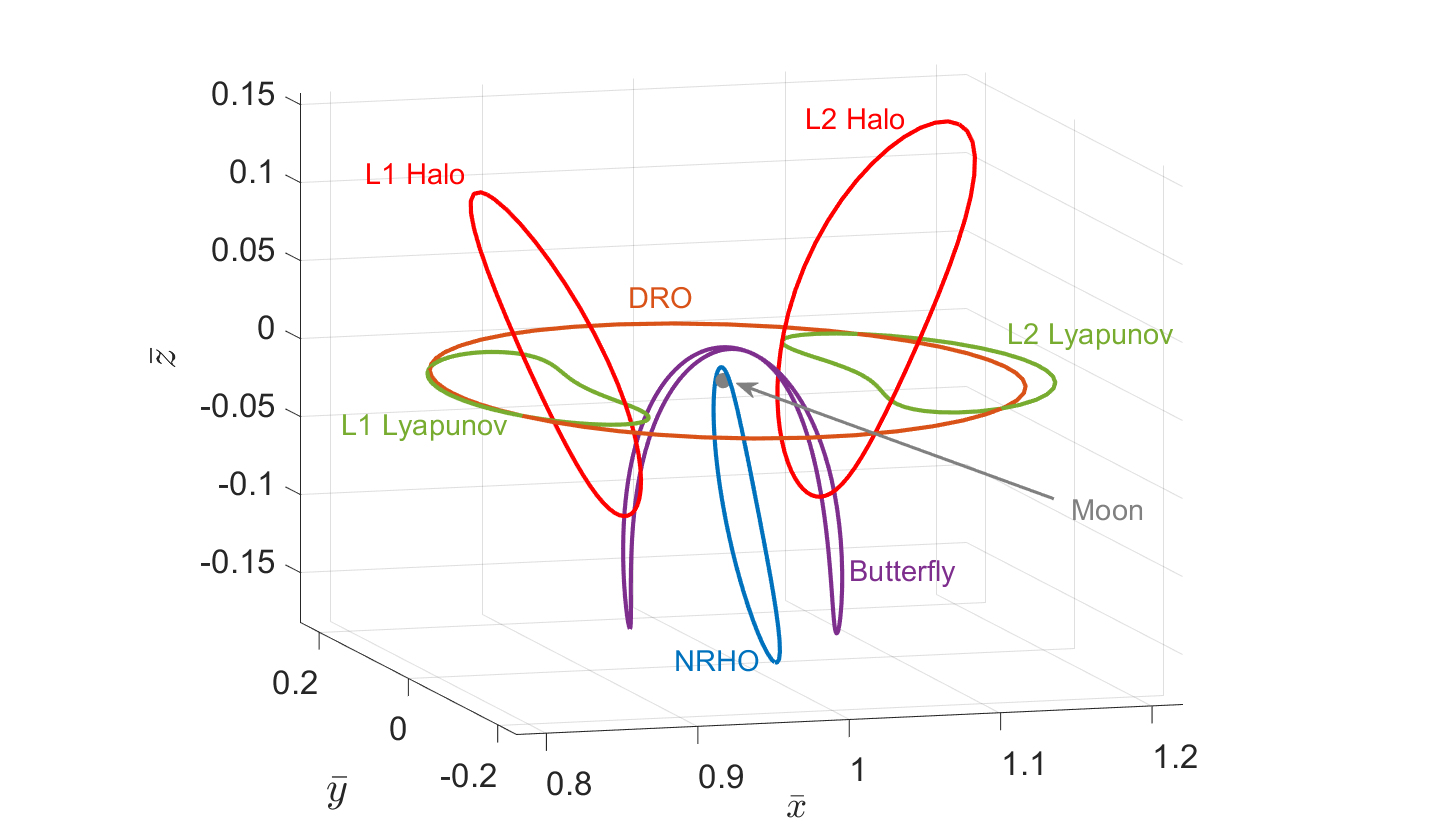}}
      \caption{{Practical examples of orbits that cannot be naturally defined in terms of algebraic equations (adapted from \cite{dixon-diss-2025}, with permission). }}\label{fig:boundaryOrbits}
      }
      }
\end{figure}
%
Obviously, this problem definition is superficially similar to a text-book two-body optimal control problem\cite{longuski,ross-book,brysonHo,vinter}. In the classic two-body problem, the initial- and final orbits are conics: circles, ellipses, hyperbolas etc.  Such geometric shapes can be described in terms of algebraic equations or so-called element sets\cite{curtis}. The element sets are integrals of motion. In the three-body problem, the orbits, such as the ones shown in Fig.~\ref{fig:boundaryOrbits}, are not describable in terms of similar integrals of motion.  In fact, the prior sentence is essentially a statement of the famous Bruns-Poincar\'{e} theorem of impossibility\cite{ams-1913,phd-1993,moser-1971}. Hence, it follows that a major difference between a three-body and a two-body trajectory optimization problem is that the initial and final orbits in the former case cannot be naturally described in terms of algebraic (including transcendental\cite{ams-1913}) equations. Nonetheless, the orbits are indeed computable as evident in Fig.~\ref{fig:boundaryOrbits}.  Ignoring the details of this computational process, the theoretical concept for defining an inital/final orbit for a trajectory optimization problem in this setting is equivalent to imposing certain special conditions on the evolution of a (three-body) dynamical system given generically by,
\begin{equation}\label{eq:ode-g}
\dot\bx = \bg(\bx, t)
\end{equation}
where, $\bg: \real{6} \times \Real \to \real{6}$ is a time-varying vector field defined by the equations of motion\cite{szebehely_theory_1967}.  Thus, for example, the L1 Lyapunov orbit shown in Fig.~\ref{fig:boundaryOrbits} was obtained by a numerical propagation of the initial conditions in an elliptic, restricted, three-body problem given approximately (see \cite{dixon-diss-2025} for more accurate numbers) by,
\begin{equation}\label{eq:intro-init-0}
\bx(t^\sharp_0) = (0.803, 0.000, 0.000, 0.317, 0.000, 0.000, 0.000)
\end{equation}
where $t^\sharp_0$ corresponds to time at perilune.  The main point to note here is that if any of the orbits shown in Fig.~\ref{fig:boundaryOrbits} are designated as initial orbits for a trajectory optimization problem, then none of them are given by some algebraic equation of the form\cite{ross-book},
\begin{equation}\label{eq:e0=0}
\be_0(\bx_0, t_0) = \bzero
\end{equation}
where $\be_0: \real{6} \times \Real \to \real{N_{e_0}}, \ N_{e_0} \ge 1 $ is an explicitly given algebraic function.  Typical boundary conditions in optimal control are customarily assumed to be given according to \eqref{eq:e0=0}\cite{brysonHo,longuski,ross-book}.  See also Section~\ref{sec:reviewTVC} for further details.  It is extremely important to note that the pair $(\bx_0, t_0) \in \real{6} \times \Real$ in the context of an initial L1 Lyapunov orbit shown in Fig.~\ref{fig:boundaryOrbits}, is not  the specific point given by \eqref{eq:intro-init-0}; rather, it is any point on that orbit. Likewise, the near rectilinear halo orbit (NRHO) shown in Fig.~\ref{fig:boundaryOrbits} was obtained by a numerical propagation of the initial condition given approximately by\cite{dixon-diss-2025},
\begin{equation}\label{eq:intro-init-f}
\bx(t^\natural_f) = (1.026, 0.000, -0.194, 0.000, -0.108, 0.000, 0.000)
\end{equation}
Needless to say, if the target manifold were to be the NRHO shown in Fig.~\ref{fig:boundaryOrbits}, it is not given in terms of a final-time algebraic equation,
\begin{equation}\label{eq:ef=0}
\be_f(\bx_f, t_f) = \bzero
\end{equation}
where, $\be_f$ is defined similarly to $\be_0$\cite{ross-book}. In the context of Fig.~\ref{fig:boundaryOrbits}, the pair $(\bx_f, t_f) \in \real{6} \times \Real$ is any point on the final NRHO and not necessarily the specific point given by \eqref{eq:intro-init-f}. Thus, there is a gap, or at least the appearance of a gap, in the existing theory of optimal control in both the mathematical formulation of boundary conditions as well as the ensuing transversality conditions.  The mathematical details of this gap are reviewed in Section~\ref{sec:reviewTVC}.

Before proceeding further, it is useful to observe the following points in the suite of three-body orbit transfer problems implied in Fig.~\ref{fig:boundaryOrbits}:
\begin{enumerate}
\item The rudimentary problem posed in the previous paragraphs is not to transfer a spacecraft from a given initial point, $\bx(t_0^\sharp)$, to a final point, $\bx(t_f^\sharp)$.  The transversality conditions for given/known point conditions are already well-established\cite{brysonHo,longuski,ross-book}.
\item For the purposes of the theory developed in this paper, we are not interested in solving the computational problem by curve-fitting the initial and final orbits/manifolds as done in Ref.~\cite{dixon-er3bp-1}, for example. Curve-fitting will solve the computational problem (in principle) but will not expose the more fundamental mathematics underlying the case of boundary conditions defined by differential equations.
\item The motivating application problem posed herein is to find the necessary conditions for the optimal departure point, $\bx_0$, from the initial orbit/manifold as well as the necessary conditions for the optimal arrival point, $\bx_f$, at the target orbit/manifold.  In the elliptic, restricted, three-body problem and in the general $N$-body problem, these departure and arrival points are also dependent on the clock times, $t_0$ and $t_f$.
\item Although we are motivated by the problems of the type shown in Fig.~\ref{fig:boundaryOrbits}, our objective is neither limited to the three-body problem nor specific to spacecraft trajectory optimization problems. Our objective is primarily to derive transversality conditions for any problem, space-related or otherwise, that may have similar stipulations on boundary conditions.
 
\end{enumerate}
By answering the fundamental question posed in this paper, we arrive at several computable transversality conditions.  The details of the numerics for a particular three-body application problem selected from the possible suite of boundary conditions depicted in Fig.~\ref{fig:boundaryOrbits} are described in \cite{dixon-diss-2025}.

\section{A Brief Review of Existing Transversality Conditions}
\label{sec:reviewTVC}

It is possible to state the transversality conditions for an optimal control problem in terms of a simple statement\cite{vinter,clarke-2013book} that is more general than Pontryagin's original theorem\cite{PBGM-1962}.  Although a precise statement of these generalized transversality conditions requires the tools of nonsmooth calculus\cite{vinter,clarke-classic-1990}, we will only use them in a smooth setting because the latter requires substantially less mathematical machinery than the former.  To this end, we pose the following optimal control problem:  Find the tuple $\big( \bx(\cdot), \bu(\cdot), t_0, t_f, \bp \big)$ that solves Problem~$(\bP)$ defined by:
%
\begin{eqnarray}
&\bx(t) \in \real{N_x}, \quad \bu(t) \in \real{N_u}, \quad t \in \Real, \quad \bp \in \real{N_p}   & \nonumber\\
& (\textsf{$\bP$}) \left\{
\begin{array}{crl}
\displaystyle\mathop{\emph{Minimize }}_{[\bxf, \buf, t_0, t_f, \bp]} & J[\bx(\cdot), \bu(\cdot), t_0, t_f, \bp] =& E(\bx(t_0), t_0, \bx(t_f), t_f, \bp)\\[.5em]
& & \quad \displaystyle + \int_{t_0}^{t_f} F(\bx(t), \bu(t), t, \bp)\, dt \\[1em]
\emph{Subject to}& \dot\bx(t) =& \bff(\bx(t), \bu(t), t, \bp)  \\
& \bu(t) \in & U(t) \\
&  \big(\bx(t_0), t_0, \bx(t_f), t_f, \bp\big) \in &  S \\
\end{array} \right. & \label{eq:probPbold}
\end{eqnarray}
%
The notation used in \eqref{eq:probPbold} is as follows\cite{ross-book}:
\begin{enumerate}
\item $N_{(\cdot)} \in \mathbb{N}$ represents the number of variables or functions for the symbols denoted by the subscript;
\item $\bx \in \real{N_x}$ is the state variable, $\bu \in \real{N_u}$ is the control variable, $t \in \Real$ is the clock time, $t_0 \in \Real$ is an initial clock time, $t_f > t_0$ is a final clock time, and $\bp \in \real{N_p}$ is an optimization parameter;
\item $(\bxf, \buf)$ is the state-control function pair (i.e., an element in some Sobolev space\cite{ross-book,vinter,clarke-2013book});
\item $J: \big(\bx(\cdot), \bu(\cdot), t_0, t_f, \bp\big) \mapsto \Real $ is the standard (``Bolza'') cost functional, $E: \real{N_x} \times \Real \times \real{N_x} \times \Real \times \real{N_p} \to \Real $ is the endpoint (``Mayer'') cost and $F: \real{N_x} \times \real{N_u} \times \Real \times \real{N_p} \to \Real$ is the running (``Lagrange'') cost\cite{longuski,brysonHo,ross-book};
\item $\bff$ is a non-autonomous (i.e., time-dependent) dynamics function, $\bff:\real{N_x} \times \real{N_u} \times \Real \times \real{N_p} \to \real{N_x} $;
\item $U(t)$ is a given compact set (which may vary with time) that defines the (instantaneous) allowable values of $\bu(t)$;
\item $S \subset \real{N_x} \times \Real \times \real{N_x} \times \Real \times \real{N_p}$ is a given closed set, whose parametrization is part of the main topic of this paper. See Fig.~\ref{fig:Examples4S} for illustrative sets. 
\end{enumerate}
%
\begin{figure}[h!]
      \centering
      {\parbox{0.9\columnwidth}{
      \centering
      {\includegraphics[width = 0.6\columnwidth]{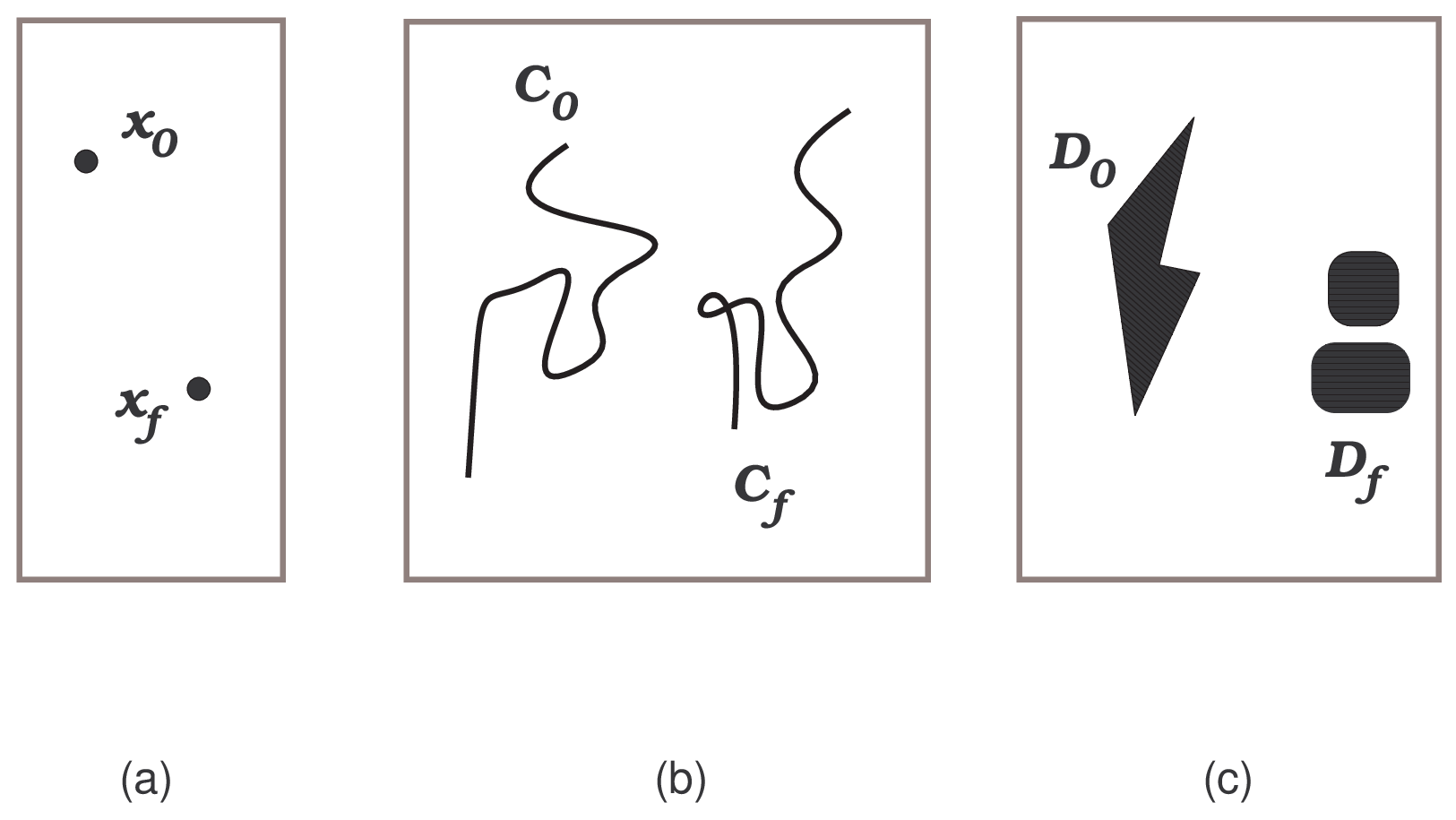}}
      \caption{{Illustrative examples of the set $S$ used in the definition of Problem~$(\bP)$ given by \eqref{eq:probPbold}.}}\label{fig:Examples4S}
      }
      }
\end{figure}
%
The set $S$ in Problem~$(\bP)$ is quite arbitrary.  In the simplest case of fixed points and fixed times (with the optimization parameter $\bp$ fixed) $S$ is a singleton.  Frequently, $S$ is a Cartesian product of two sets comprising an initial and final set. These aspects are illustrated in Fig.~\ref{fig:Examples4S}.  In panel (a), the set $S$ is just (the Cartesian product of) two points.  In panel (b), $S$ comprises curves denoted by $C_0$ and $C_f$ while panel (c) illustrates two nonconvex sets $D_0$ and $D_f$.   
%
\begin{remark}
Path constraints and functional constraints\cite{ross-book} may be incorporated in the definition of Problem~$(\bP)$. Because the main topic of this paper is transversality conditions, these additional constraints do not alter the results to follow.  Hence, there is no loss in generality in omitting these constraints in the statement of Problem~$(\bP)$.
\end{remark}
We assume all data functions in Problem~$(\bP)$ to be continuously differentiable with respect to all of their respective arguments.  Hence, a statement of the necessary conditions for  Problem~$(\bP)$ involves the structure and properties of the set $S$.  This topic constitutes the main discussion point of this section.

\subsection{Mathematical Preliminaries}
Because the endpoint condition in Problem~$(\bP)$ is defined in terms of an abstract set, it should not be a surprise that the transversality conditions are stated in terms of some property of $S$.  This property is called the normal cone\cite{vinter,clarke-2013book} and is defined in terms of a vector that ``leaves $S$  orthogonally\cite{clarke-2013book}.''  Although it is possible to define a normal cone independent of a tangent cone\cite{vinter,clarke-2013book}, we take the simpler path of defining a tangent cone first.
Fig.~\ref{fig:pacman} provides a useful visual in understanding the definitions to follow.
%
\begin{definition}[Tangent Cone\cite{clarke-2013book,NW:NumOptBook,Jahn-2007}]\label{definition:TC}
A vector $\bd_\chi$ is said to be a tangent vector to $S$ at the point $\bchi$ if there is a sequence of feasible vectors $\set{\bchi^k}$ and a sequence of positive scalars $\set{s^k}$ with $s^k \to 0$ such that %
$$\lim_{k \to \infty} \frac{\bchi^k - \bchi}{s^k} = \bd_\chi $$
The set of all tangent vectors at $\bchi$, denoted by $T_S(\bchi)$, is called the tangent cone (to $S$ at $\bchi$).
\end{definition}
\begin{definition}[Normal Cone\cite{vinter,clarke-2013book,NW:NumOptBook}]\label{definition:NC}
The normal cone to $S$ at a point $\bchi \in S$, denoted by $N_S(\bchi)$, is defined as,
$$N_S(\bchi):= \mathop{\lim\sup}_{k \to \infty} T_S^*(\bchi_k)$$
where $T_S^*(\bchi_k)$, called the polar cone to $T_S(\bchi_k)$, is defined by,
\begin{equation}\label{eq:polarconedef}
T_S^*(\bchi_k):= \set{\bzeta_\chi:\ \bzeta_\chi^T\bd_\chi \le 0 \quad \forall\ \bd_\chi \in T_S(\bchi_k) }
\end{equation}
A vector $\bzeta_\chi \in N_S(\bchi)$ is called a normal vector to $S$ at $\bchi$.
\end{definition}
\begin{figure}[h!]
      \centering
      {\parbox{0.9\columnwidth}{
      \centering
      {\includegraphics[width = 0.6\columnwidth]{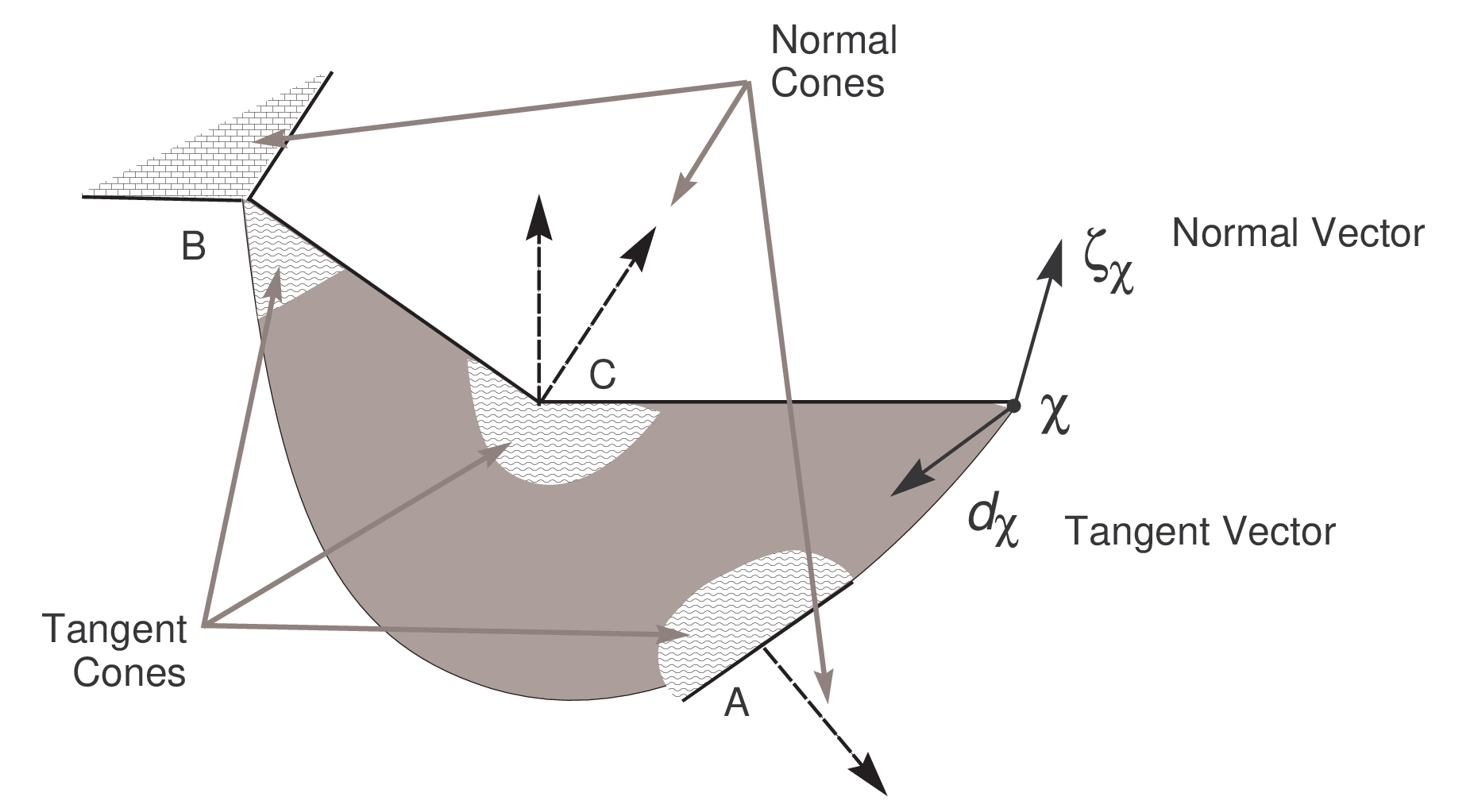}}
      \caption{{Tangent and normal vectors and cones at key points of a nonconvex set $S$.}}\label{fig:pacman}
      }
      }
\end{figure}
%
Fig.~\ref{fig:pacman} illustrates all these cones at key points labeled $A$, $B$ and $C$:
\begin{enumerate}
\item Consider the point $A$ that lies on a smooth boundary of a set. Then the normal cone, $N_S(A)$, is just the outward perpendicular to the tangent hyperplane.  The tangent cone is the half-space indicated at the point $A$ in Fig.~\ref{fig:pacman}.
\item Consider the point $B$ that lies on a nonsmooth boundary point.  This point is locally convex. In this case, the (convex) set of all outward normals is the normal cone $N_S(B)$;
\item Consider the point $C$ on a nonsmooth boundary point that is not locally convex.  In this case, the limiting set of all outward normals is the (nonconvex) normal cone $N_S(C)$ that comprises just the two vectors shown in Fig.~\ref{fig:pacman}.
\end{enumerate}
In the literature on nonsmooth calculus, a large number of tangent and normal cones are defined\cite{vinter,clarke-2013book,sussmann-cones-2005,benko-cones-2019}.  The tangent cone of Definition~\ref{definition:TC} is called the Bouligand or contingent tangent cone\cite{benko-cones-2019,clarke-2013book,Jahn-2007} and the normal cone defined in Definition~\ref{definition:NC} is called the Mordukhovich or limiting normal cone\cite{sussmann-cones-2005,clarke-2013book,vinter}.  In this paper, we will not use these qualifiers and rely exclusively on Definitions~\ref{definition:TC} and \ref{definition:NC} because these cones are the most relevant to the formulation of transversality conditions.
%
\begin{remark}\label{rem:ortho}
In general, the tangent and normal cones are nonconvex; see the cones illustrated at the point $C$ in Fig.~\ref{fig:pacman}.  Furthermore, note the following:
\begin{enumerate}
\item From their definitions, it is clear that zero is an element of both the tangent and normal cones.  They are sketched in Fig.~\ref{fig:pacman} as emanating from the point $\bchi$ as a floating vector\cite{rockwets} instead of the origin\cite{Jahn-2007} as in Fig.~\ref{fig:TandNcones}.
%
\begin{figure}[h!]
      \centering
      {\parbox{0.9\columnwidth}{
      \centering
      {\includegraphics[width = 0.55\columnwidth]{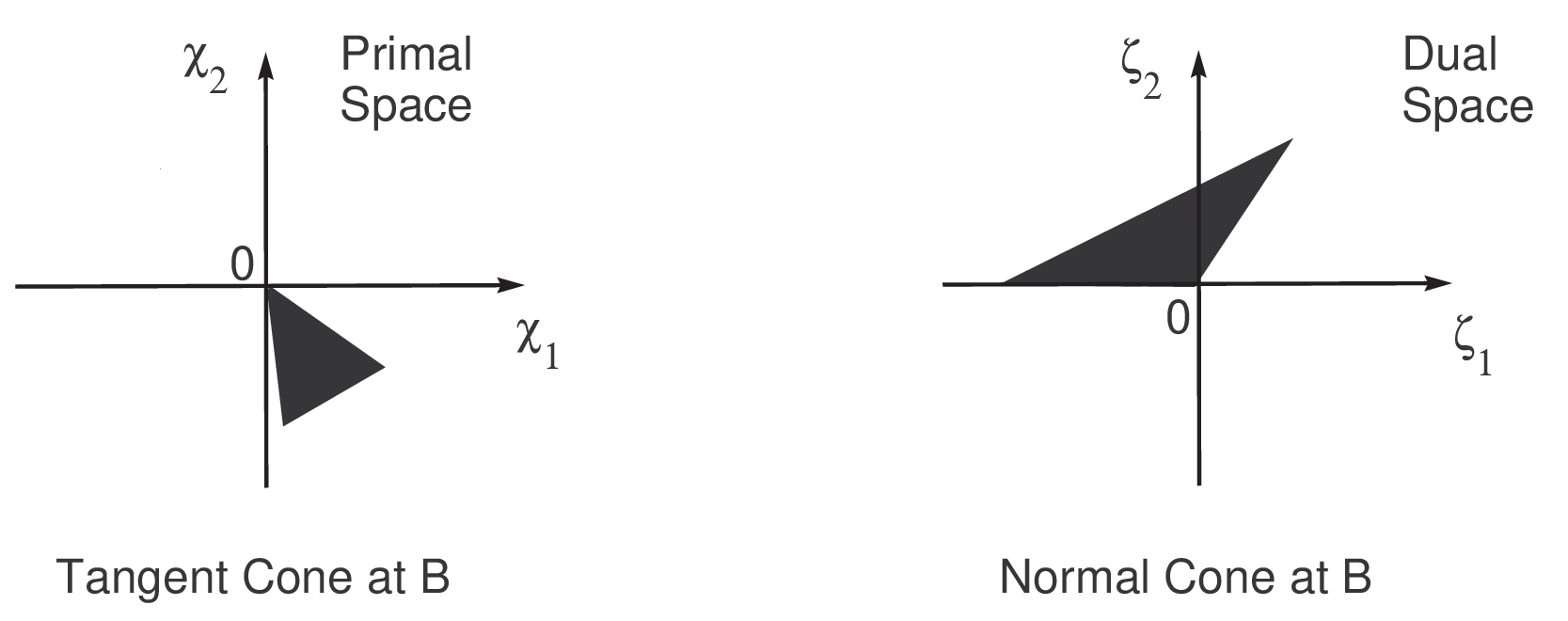}}
      \caption{{Tangent and normal cones at the point $B$ of Fig.~\ref{fig:pacman} and their different spaces.}}\label{fig:TandNcones}
      }
      }
\end{figure}
\item The normal cone is a construct in dual space.  The vector $\bzeta_\chi$ in \eqref{eq:polarconedef} is a covector whose ``units'' are in ``frequency''\cite{ross-book}.  For example, the units of $(\chi_1, \chi_2)$ and $(\zeta_1, \zeta_2)$ in Fig.~\ref{fig:TandNcones} are not the same.  See \cite{ross-book} for details pertaining to a ``physical interpretation'' of covectors.  The normal cone sketched in Fig.~\ref{fig:pacman} uses the notion of a proxy vector to the covector\cite{ross-book} in order to display its connection to primal space.
\item Even though a vector and covector are elements in different vector spaces as shown in Fig.~\ref{fig:TandNcones}, we say $\bzeta_\chi$ is ``orthogonal'' to  $\bd_\chi$ if $\bzeta_\chi^T \bd_\chi = 0$.  Note that each component of $\bzeta_\chi$ can have different units\cite{ross-book}.
\end{enumerate}
\end{remark}

The transversality conditions for Problem~$(\bP)$ can be stated in terms of the following three constructs:
\begin{enumerate}
\item[(a)] the endpoint values of the adjoint covector (costate),
\item[(b)] the endpoint values of the lower (i.e., minimized) Hamiltonian, and
\item[(c)] the normal cone to the set $S$ that defines the endpoint constraints.
\end{enumerate}
To state these conditions we first write the (Pontryagin) Hamiltonian function for the pair $(F, \bff)$ as\cite{ross-book},
\begin{equation}\label{eq:H=pontry}
H(\blam, \bx, \bu, t, \bp) := \nu^0\, F(\bx, \bu, t, \bp) + \blam^T\bff(\bx, \bu, t, \bp)
\end{equation}
where $\nu^0 \ge 0$ is a cost multiplier and $\blam \in \real{N_x}$ is a covector that satisfies the adjoint differential equation,
\begin{equation}\label{eq:adjoint}
-\dot\blam(t) = \partial_{\bx} H(\blam(t), \bx(t), \bu(t), t, \bp)
\end{equation}
Typically, we have $\nu^0 > 0$, the so-called normal case\cite{ross-book}.  If $\nu^0 = 0$ the case is deemed ``abnormal.'' In the normal case, we may use $\nu^0 > 0$ to scale and balance the adjoint equations\cite{scaling} in a computational setting or simply set $\nu^0 = 1$ for theoretical analysis. The centerpiece of Pontryagin's Principle is the Hamiltonian Minimization Condition (HMC), a static, time-varying, instantaneous optimization problem, defined by\cite{ross-book}:
%
%
\begin{eqnarray}
& (\textsf{$HMC$}) \left\{
\begin{array}{lll}
\displaystyle\mathop{\emph{Minimize }}_{\bu(t)} & H(\blam(t), \bx(t), \bu(t), t, \bp)\\[.5em]
\emph{Subject to}
& \bu(t) \in  U(t)
\end{array} \right. & \label{eq:probHMC}
\end{eqnarray}
%
A (conceptual) solution to Problem~$(HMC)$ is given by,
\begin{equation}\label{eq:u=argmin}
\bu^*(\blam(t), \bx(t), t, \bp) = \arg\min_{\bu(t) \in U(t)} H(\blam(t), \bx(t), \bu(t), t, \bp)
\end{equation}
Substituting \eqref{eq:u=argmin} in \eqref{eq:H=pontry} eliminates $\bu$ and generates the lower Hamiltonian\cite{ross-book,clarke-2013book},
\begin{equation}
\mathcal{H}(\blam(t), \bx(t),  t, \bp) = H(\blam(t), \bx, \bu^*(\blam(t), \bx(t), t, \bp), t, \bp)
\end{equation}
Note that the lower Hamiltonian $\mathcal{H}$ is not a function of the control variable. It is frequently nondifferentiable\cite{ross-book}.

In the discussions to follow, it will be convenient to use the following abbreviation:
\begin{equation}\label{eq:x0xf-abbrv}
\bx(t_0) \equiv \bx_0, \quad\quad \bx(t_f) \equiv \bx_f, \quad{and}\quad \mathcal{H}(\blam(t), \bx(t),  t, \bp) \equiv \mathcal{H}[@t]
\end{equation}

\subsection{Transversality Conditions for the Case When $\bp$ is Not an Optimization Variable}
Much of the literature contains necessary conditions for the case when $\bp$ is not an optimization variable.  Hence, the case when $\bp$ is indeed an optimization variable deserves special consideration.  This case is deferred to the next subsection.
\begin{theorem}[\cite{clarke-2013book,vinter}]\label{theorem-TVC-1}
Let Problem~($\bP^\sharp$) denote the special case of Problem~$(\bP)$ when $\bp$ is fixed to a given value $\bp = \bp^\sharp$. Let $\mathcal{H}[@t]$ denote an abbreviated notation for $\mathcal{H}(\blam(t), \bx(t),  t, \bp^\sharp)$. Then the transversality condition for Problem~($\bP^\sharp$) is given by,
\begin{equation}\label{eq:TVC-abstract-no-p}
\big(-\blam(t_0), \mathcal{H}[@t_0], \blam(t_f), -\mathcal{H}[@t_f]\big) \in \nu^0\,\partial_{(\bx_0, t_0, \bx_f, t_f)} E(\bx_0,t_0, \bx_f, t_f, \bp^\sharp) +  N_S(\bx_0, t_0, \bx_f, t_f; \bp^\sharp)
\end{equation}
where $\bx_0$ and $\bx_f$ are $\bx(t_0)$ and $\bx(t_f)$ as respectively in accordance with \eqref{eq:x0xf-abbrv}.
\end{theorem}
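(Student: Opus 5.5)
This is a cited result from Clarke and Vinter, so the plan reconstructs the standard argument in the smooth setting assumed here. The idea is to reduce the free-endpoint, free-time problem to a fixed-interval problem with only endpoint constraints. The transversality condition then comes out of a nonsmooth Lagrange multiplier rule for a finite-dimensional endpoint optimization.

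\textbf{Step 1: fix the time interval.} Introduce a new independent variable $\tau\in[0,1]$ by setting
$$t=t_0+\tau\,(t_f-t_0).$$
Append $t_0$ and $t_f$ as constant states with $t_0'=0$ and $t_f'=0$, and let the clock $t(\tau)$ be a state with
$$t'=t_f-t_0.$$
The transformed dynamics are $\bx'=(t_f-t_0)\,\bff(\bx,\bu,t,\bp^\sharp)$, and the running cost becomes $(t_f-t_0)F$. Problem~($\bP^\sharp$) is now a fixed-horizon problem in the augmented state $(\bx,t,t_0,t_f)$, with endpoint constraint $(\bx(0),t(0),\bx(1),t(1))\in S$.

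\textbf{Step 2: apply the fixed-time endpoint-set maximum principle.} Use the version in \cite{vinter,clarke-2013book} for problems whose constraint is a closed endpoint set. It provides $\nu^0\ge0$ and an augmented costate $(\blam,\lambda_t)$, not both zero, satisfying Hamiltonian minimization and the transversality condition
$$(-\blam(0),-\lambda_t(0),\blam(1),\lambda_t(1))\in\nu^0\partial E+N_S.$$
That fixed-time result is itself proved by perturbation: needle (Pontryagin) variations generate a cone of reachable endpoint variations. One then shows that this cone and the tangent cone to the endpoint constraint set cannot be separated from a cost-decreasing direction at the optimum. The limiting normal cone $N_S$ appears because the separation is done at nearby points $\bchi_k$ and one passes to $\limsup$, as in Definition~\ref{definition:NC}. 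An alternative route is a penalization via Ekeland's variational principle, with exact penalization by the distance to $S$, whose subgradient lies in $N_S$.

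\textbf{Step 3: read off the Hamiltonian entries.} Compute the costate for the clock. The transformed Hamiltonian is $(t_f-t_0)\,[\nu^0F+\blam^T\bff]+\lambda_t(t_f-t_0)$. After HMC, the condition that the minimized Hamiltonian be constant in $\tau$ for this autonomous problem gives $\lambda_t(\tau)=-\mathcal{H}[@t(\tau)]$ at the endpoints. Differentiability of $\mathcal H$ along the trajectory follows from the standard $\dot{\mathcal H}=\partial_t\mathcal H$ argument. Substituting at $\tau=0$ and $\tau=1$ turns the pair $(-\lambda_t(0),\lambda_t(1))$ into $(\mathcal H[@t_0],-\mathcal H[@t_f])$. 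Undoing the time scaling, $\blam$ satisfies \eqref{eq:adjoint} in $t$, which yields \eqref{eq:TVC-abstract-no-p}.

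\textbf{Main obstacle.} The hard step is Step 2: obtaining membership in the nonconvex limiting cone $N_S$ rather than only in the Clarke or polar cone $T_S^*$ at the point itself. I would handle this by the Ekeland/penalization route. Approximate the problem by ones whose solutions converge, apply the smooth multiplier rule to each approximation to get proximal normals at $\bchi_k$, and normalize the multipliers so that $\nu^0+\|\blam\|=1$. A nontrivial limit then exists by compactness, and the limit of the proximal normals lies in $N_S$ by definition of the $\limsup$. Keeping the multipliers nontrivial in this limit is the delicate point.
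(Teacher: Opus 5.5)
The paper gives no proof of Theorem~\ref{theorem-TVC-1}; it is quoted from \cite{clarke-2013book,vinter}. Your outline is the standard route in those references: normalize time to $[0,1]$, apply a fixed-horizon endpoint-set maximum principle, and obtain limiting normals via Ekeland and exact penalization. The skeleton is sound, but Step~3 has a real gap. In the autonomous $\tau$-problem, constancy of the minimized Hamiltonian only gives $\widetilde{\mathcal H}=(t_f-t_0)\big(\mathcal H[@t(\tau)]+\lambda_t(\tau)\big)\equiv c$ for some unknown constant $c$. Substituting $\lambda_t=c/(t_f-t_0)-\mathcal H$ into your Step~2 inclusion yields the Hamiltonian entries $\mathcal H[@t_0]-c/(t_f-t_0)$ and $c/(t_f-t_0)-\mathcal H[@t_f]$. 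This is \eqref{eq:TVC-abstract-no-p} only if $c=0$.

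The condition $c=0$ is exactly the free-duration information, and two things fail without it:
\begin{enumerate}
\item With $t_0$ fixed and $t_f$ free, the shifted inclusion says nothing at all about $\mathcal H[@t_f]$.
\item The degenerate choice $\nu^0=0$, $\blam\equiv\bzero$, $\lambda_t\equiv k\neq0$ satisfies everything Step~2 actually delivers (adjoint equations, Hamiltonian minimization, constancy, and the inclusion) whenever $(\bzero,-k,\bzero,k)\in N_S$. So your nontriviality of $(\nu^0,\blam,\lambda_t)$ does not give nontriviality of $(\nu^0,\blam)$.
\end{enumerate}

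The fix uses the constant states $t_0,t_f$ that you appended in Step~1 and then dropped from the Step~2 inclusion. Since you do not tie them to $t(0),t(1)$, the augmented endpoint set is $S\times\Real^4$, so their costates satisfy $\mu_{t_0}(0)=\mu_{t_0}(1)=0$. On the other hand, $\mu_{t_0}'=-\partial_{t_0}\widetilde H=\mathcal H+\lambda_t=c/(t_f-t_0)$ a.e., hence $c=0$; this also excludes the degenerate multiplier above. Equivalently, if you treat $v=t_f-t_0$ as a parameter not involved in $E$ or $S$, this is the Hofer--Sagirow condition of Theorem~\ref{theorem-TVC-2}: $\int_0^1\partial_v\widetilde H\,d\tau=0$.

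Two smaller points.

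\emph{Time-varying control set.} Problem~$(\bP)$ allows $U(t)$ to vary with time. After Step~1 the constraint $\bu(\tau)\in U(t(\tau))$ depends on the state $t$. Neither the fixed-horizon principle you invoke nor the autonomy behind constancy applies unless $U$ is independent of $t$, so you should state that restriction. For merely measurable $t\mapsto U(t)$, the pointwise value $\mathcal H[@t_f]$ is not even robust: altering $U$ at the single instant $t_f$ changes it without changing the problem. This is why essential values are used in \cite{vinter} in that setting.

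\emph{Separation in Step~2.} Drop the separation picture: the Bouligand cone $T_S$ is nonconvex in general, so it cannot be separated from the variation cone. The Ekeland route is the one that actually produces $N_S$. Nontriviality survives the limit there if you penalize with a strictly positive function such as $\max\{J-J^\ast+\varepsilon_k^2,\ d_S\}$, with the running cost absorbed into a state and $J^\ast$ the optimal value. Then at each perturbed minimizer the cost multiplier $\nu^0_k$ and the normal $\bxi_k$ coming from $d_S$ satisfy $\nu^0_k+\|\bxi_k\|=1$, because elements of $\partial d_S$ have unit norm off $S$. This normalization passes to the limit.
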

%
Theorem~\ref{theorem-TVC-1} is an elegant statement of transversality conditions cast in geometric terms. However, for \eqref{eq:TVC-abstract-no-p} to be useful from a computational perspective, it requires a mechanism to compute $N_S$.  To clarify this statement,  we first rewrite \eqref{eq:TVC-abstract-no-p} in a more convenient ``equality'' form as,
\begin{equation}\label{eq:TVC-abstract-no-p=}
\big(-\blam(t_0), \mathcal{H}[@t_0], \blam(t_f),  -\mathcal{H}[@t_f] \big) = \nu^0\,\partial_{(\bx_0, t_0, \bx_f, t_f)} E(\bx_0, t_0, \bx_f, t_f, \bp^\sharp) + \etab
\end{equation}
for some $\etab \in N_S(\bx_0, t_0, \bx_f, t_f, \bp^\sharp)$.  Equation~(\ref{eq:TVC-abstract-no-p=}) reveals that the main problem in computing the transversality condition is centered around finding a suitable $\etab$.

As powerful as it is, it turns out Theorem~\ref{theorem-TVC-1} is not sufficient to handle the production of transversality conditions for the motivating problems illustrated in Fig.~\ref{fig:boundaryOrbits}.  In fact, what is needed is an inclusion of the case when $\bp$ is an optimization variable.

\subsection{Transversality Conditions for the General Case}
The general case when $\bp$ is an optimization variable deserves special attention due to an unfortunate error in the original work by Pontryagin et al\cite{PBGM-1962}.  This error in the necessary optimality condition for $\bp$ was first noted and corrected by Hofer and Sagirow in 1968\cite{HS-1968}.  The corrected necessary condition was subsequently generalized by other authors\cite{AG-1973,kyber-80,kyber-81}.  In principle, the necessary condition for an optimal $\bp$ is not part of the transversality condition; however, because $\bp$ appears in the endpoint functional and constraints (see \eqref{eq:probPbold}) it cannot be decoupled in the computation of the normal cone in \eqref{eq:TVC-abstract-no-p} (or \eqref{eq:TVC-abstract-no-p=}).  As noted earlier, we will utilize the general case in Section~\ref{sec:newTVCs}.

To combine the result of Theorem~\ref{theorem-TVC-1} with the results of \cite{HS-1968,AG-1973,kyber-80,kyber-81} we define a quantity $\bsigma \in \real{N_p}$ according to,
\begin{equation}\label{eq:sigma=bydef}
\bsigma:=  \int_{t_0}^{t_f} \partial_\bp H(\blam(t), \bx(t), \bu(t), t, \bp)\, dt
\end{equation}
If $\bp$ is unconstrained, then the ``corrected'' necessary condition\cite{HS-1968,AG-1973,kyber-80,kyber-81} for optimality is given by,
\begin{equation}
-\bsigma = \nu^0\,\partial_\bp E(\bx_0, t_0, \bx_f, t_f, \bp)
\end{equation}
If $\bp$ is constrained then $\bsigma + \nu^0\,\partial_\bp E(\bx_0, t_0, \bx_f, t_f, \bp)$ satisfies necessary conditions\cite{AG-1973,kyber-80,kyber-81} given by the Karush-Kuhn-Tucker (KKT) theorem\cite{NW:NumOptBook}.  In view of this result, Theorem~\ref{theorem-TVC-1} generalizes accordingly and can be stated as follows:
\begin{theorem}\label{theorem-TVC-2}
The transversality condition for Problem~($\bP$) is given by,
\begin{equation}\label{eq:TVC-abstract-yes-p}
\big(-\blam(t_0), \mathcal{H}[@t_0], \blam(t_f),  -\mathcal{H}[@t_f], -\bsigma \big) \in \nu^0\,\partial_{(\bx_0, t_0, \bx_f, t_f, \bp)} E(\bx_0,t_0, \bx_f, t_f, \bp) +  N_S(\bx_0, t_0, \bx_f, t_f, \bp)
\end{equation}
or equivalently as,
\begin{equation}\label{eq:TVC-abstract-yes-p=}
\big(-\blam(t_0), \mathcal{H}[@t_0], \blam(t_f), -\mathcal{H}[@t_f], -\bsigma \big) = \nu^0\,\partial_{(\bx_0, t_0, \bx_f, t_f, \bp)} E(\bx_0, t_0, \bx_f, t_f, \bp) + \etab
\end{equation}
where $\etab \in N_S(\bx_0, t_0, \bx_f, t_f, \bp)$ and $\bx_0$,  $\bx_f$, $\mathcal{H}[@t_0]$ and $\mathcal{H}[@t_f]$  denote $\bx(t_0)$,  $\bx(t_f)$, $\mathcal{H}(\blam(t_0), \bx(t_0),  t_0, \bp)$, and $\mathcal{H}(\blam(t_f), \bx(t_f),  t_f, \bp)$ respectively.
\end{theorem}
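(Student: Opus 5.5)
The plan is to reduce Problem~$(\bP)$ to a problem of the form $(\bP^\sharp)$ and then apply Theorem~\ref{theorem-TVC-1}. The device is the standard state augmentation: treat $\bp$ as an additional state $\by(t)\in\real{N_p}$ with dynamics $\dot\by = \bzero$.

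The augmented problem has state $(\bx,\by)$, dynamics $(\bff(\bx,\bu,t,\by),\bzero)$, running cost $F(\bx,\bu,t,\by)$ and endpoint cost $E(\bx_0,t_0,\bx_f,t_f,\by_f)$. Its endpoint set is
$$\tilde S:=\set{(\bx_0,\by_0,t_0,\bx_f,\by_f,t_f):\ (\bx_0,t_0,\bx_f,t_f,\by_f)\in S,\ \by_0=\by_f}.$$
This problem has no free parameter, so Theorem~\ref{theorem-TVC-1} applies directly. Its Pontryagin Hamiltonian is $\tilde H = H(\blam,\bx,\bu,t,\by) + \bmu^T\bzero = H$. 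Hence the lower Hamiltonians coincide, and $\blam$ obeys \eqref{eq:adjoint} unchanged. The new covector $\bmu$ satisfies $-\dot\bmu = \partial_\by H = \partial_\bp H$, which integrates to
$$\bmu(t_f)-\bmu(t_0) = -\bsigma,$$
with $\bsigma$ as in \eqref{eq:sigma=bydef}.

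Theorem~\ref{theorem-TVC-1} then gives
$$\big(-\blam(t_0),-\bmu(t_0),\mathcal H[@t_0],\blam(t_f),\bmu(t_f),-\mathcal H[@t_f]\big)\in \nu^0\,\partial E + N_{\tilde S}.$$
Here $\partial E$ has zero $\by_0$-component, and its $\by_f$-component is $\partial_\bp E$. The remaining step is to compute $N_{\tilde S}$ in terms of $N_S$. Write $\tilde S = L^{-1}(S)\cap\set{\by_0=\by_f}$, where $L$ is a linear surjection. I expect
$$N_{\tilde S}=\set{(\bzeta_{x_0},\,\bxi,\,\zeta_{t_0},\,\bzeta_{x_f},\,\bzeta_p-\bxi,\,\zeta_{t_f}):\ (\bzeta_{x_0},\zeta_{t_0},\bzeta_{x_f},\zeta_{t_f},\bzeta_p)\in N_S,\ \bxi\in\real{N_p}}.$$
This should hold because $\tilde S$ is the image of $S$ under the linear isomorphism $(\bx_0,t_0,\bx_f,t_f,\bp)\mapsto(\bx_0,\bp,t_0,\bx_f,\bp,t_f)$ onto its range, crossed with nothing. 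Concretely, in the coordinates $(\by_0-\by_f,\ \by_f)$ the set $\tilde S$ is $\set{0}\times S$. Limiting normal cones transform by the adjoint of a linear isomorphism and satisfy the product rule, and the normal cone of $\set{0}$ is all of $\real{N_p}$.

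Matching the components: the $\by_0$-slot gives $-\bmu(t_0)=\bxi$, and the $\by_f$-slot gives $\bmu(t_f)=\nu^0\partial_\bp E+\bzeta_p-\bxi$. Adding these two equations eliminates the arbitrary $\bxi$ and yields
$$\bmu(t_f)-\bmu(t_0) = -\bsigma = \nu^0\,\partial_\bp E+\bzeta_p,$$
which is exactly the $\bp$-slot of \eqref{eq:TVC-abstract-yes-p}. The other slots reproduce the existing components verbatim, so \eqref{eq:TVC-abstract-yes-p}, and equivalently \eqref{eq:TVC-abstract-yes-p=}, follow.

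The main obstacle is the normal-cone calculus step. For the limiting cone, the change of variables and the product formula $N_{A\times B}=N_A\times N_B$ hold exactly, with no constraint qualification needed, because the map is a linear isomorphism. I would check this carefully against Definition~\ref{definition:NC}, first for the contingent cone and its polar, and then pass to the limsup. A secondary check is that the hypotheses of Theorem~\ref{theorem-TVC-1} survive the augmentation: the data remain $C^1$ in $\by$, and the control set $U(t)$ is unchanged.
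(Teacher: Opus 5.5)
Your argument is correct, and it takes a genuinely different route from the paper's. The paper does not derive Theorem~\ref{theorem-TVC-2}. It takes Theorem~\ref{theorem-TVC-1} for fixed $\bp$ and imports from Hofer--Sagirow and the follow-up literature the corrected parameter condition: $-\bsigma=\nu^0\partial_\bp E$ for unconstrained $\bp$, or a KKT system for $\bsigma+\nu^0\partial_\bp E$ when $\bp$ is constrained. It then asserts that Theorem~\ref{theorem-TVC-1} ``generalizes accordingly'' to the joint inclusion. You instead reduce Problem~$(\bP)$ to Theorem~\ref{theorem-TVC-1} itself through the augmentation $\dot\by=\bzero$. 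The Hofer--Sagirow term then appears automatically as the change $\boldsymbol{\mu}(t_0)-\boldsymbol{\mu}(t_f)=\bsigma$ of the auxiliary costate. The coordinates $(\by_0-\by_f,\by_f)$, in which $\tilde S$ becomes $\set{\bzero}\times S$, give $N_{\tilde S}$ exactly; your component formula and the slot matching both check out. The paper's route is shorter and rests on established results. Yours is self-contained given Theorem~\ref{theorem-TVC-1}. More importantly, it produces the normal cone $N_S$ at the joint point $(\bx_0,t_0,\bx_f,t_f,\bp)$, which is what is needed when $S$ couples $\bp$ with the endpoint data. The paper remarks on this coupling, but a separate KKT condition on $\bp$ does not by itself deliver the joint cone.

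One refinement is worth making. The constraint $\by_0=\by_f$ in $\tilde S$ duplicates what $\dot\by=\bzero$ already enforces. Because of this redundancy, the augmented problem satisfies every condition of Theorem~\ref{theorem-TVC-1} with the degenerate multiplier $\nu^0=0$, $\blam\equiv\bzero$, $\boldsymbol{\mu}\equiv\boldsymbol{c}\neq\bzero$, $\bxi=-\boldsymbol{c}$, and zero normal vector in $N_S$. So if Theorem~\ref{theorem-TVC-1} is read with its usual nontriviality clause, your construction does not pass nontriviality on to $(\nu^0,\blam)$. The fix is to drop the redundant constraint and leave $\by_0$ free in $\tilde S$. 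The $\by_0$-slot then gives $\boldsymbol{\mu}(t_0)=\bzero$ directly, so $\boldsymbol{\mu}(t_f)=-\bsigma$ with no auxiliary $\bxi$. If $\nu^0=0$ and $\blam\equiv\bzero$, then $\boldsymbol{\mu}$ is constant and hence zero. Nontriviality of $(\nu^0,\blam,\boldsymbol{\mu})$ therefore implies nontriviality of $(\nu^0,\blam)$, which is the content the statement is meant to carry.
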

%
In the sections to follow, we will use Theorem~\ref{theorem-TVC-2} as our building block to formulate new computable transversality conditions.

\subsection{Transversality Conditions When $S$ Is Parameterized by Algebraic Inequalities }
The most common ``application'' of Theorem~\ref{theorem-TVC-2} (and hence Theorem~\ref{theorem-TVC-1}) is when the set $S$ is parameterized by algebraic inequalities,
\begin{equation}\label{eq:S=efun}
S = S^{alg} := \set{(\bx_0, t_0, \bx_f, t_f, \bp):\ \be^L \le \be(\bx_0, t_0, \bx_f, t_f, \bp) \le \be^U}
\end{equation}
where $\be: (\bx_0, t_0, \bx_f, t_f, \bp) \mapsto \real{N_e}$ is a continuously differentiable (``algebraic'') function, $\be^L \in \real{N_e}$ is a lower bound on the allowable values of $\be$ and $\be^U \ge \be^L$ is the corresponding upper bound.  A large class of practical optimal control problems (barring the ones identified in Section~\ref{sec:Intro}) have boundary conditions that are conveniently described in the form given by \eqref{eq:S=efun}.  Then, it can be shown\cite{vinter,clarke-2013book}
that Theorem~\ref{theorem-TVC-2} reduces to the transversality conditions given by the following familiar theorem\cite{ross-book}:
%
\begin{theorem}\label{theorem:TVC-alg}
Let Problem~($\bP^{alg}$) denote the special case of Problem~$(\bP)$ where the boundary condition is defined by $S^{alg}$ given by \eqref{eq:S=efun}.  Then the transversality conditions for Problem~$(\bP^{alg})$ together with the necessary condition for optimal $\bp$ are given by,
\begin{subequations}\label{eq:TVC-alg}
\begin{align}
-\blam(t_0) &= \partial_{\bx_0} \overline{E}(\bnu, \bx_0, t_0, \bx_f, t_f, \bp) \\
\mathcal{H}[@ t_0] &=  \partial_{t_0} \overline{E}(\bnu, \bx_0, t_0, \bx_f, t_f, \bp) \\
\blam(t_f) &= \partial_{\bx_f} \overline{E}(\bnu, \bx_0, t_0, \bx_f, t_f, \bp) \\
-\mathcal{H}[@ t_f] &=  \partial_{t_f} \overline{E}(\bnu, \bx_0, t_0, \bx_f, t_f, \bp) \\
 -  \bsigma & = \partial_{\bp}\overline{E}(\bnu, \bx_0, t_0, \bx_f, t_f, \bp)\label{eq:notTVC}
\end{align}
\end{subequations}
where $\overline{E}$ is the endpoint Lagrangian\cite{ross-book} defined by,
\begin{equation}\label{eq:Ebar=bydef}
\overline{E}(\bnu, \bx_0, t_0, \bx_f, t_f, \bp):= \nu^0\,E(\bx_0, t_0, \bx_f, t_f, \bp) + \bnu^T \be( \bx_0, t_0, \bx_f, t_f, \bp)
\end{equation}
and $\bnu \in \real{N_e}$ is an endpoint covector that satisfies the following complementarity conditions\cite{ross-book},
\begin{align}\label{eq:complementarity}
\bnu \dagger \be( \bx_0, t_0, \bx_f, t_f, \bp) \Leftrightarrow \nu_i \left\{
             \begin{array}{ll}
             \le 0, & \hbox{\text{if} }\  e_i( \bx_0, t_0, \bx_f, t_f, \bp) = e_i^L \\
             = 0, & \hbox{if }\  e_i^L < e_i( \bx_0, t_0, \bx_f, t_f, \bp) < e_i^U \\
             \ge 0, & \hbox{if }\  e_i( \bx_0, t_0, \bx_f, t_f, \bp) = e_i^U \\
             \text{any value}, & \hbox{if }\  e_i^L = e_i^U
                                                 \end{array}
                                               \right.
\end{align}
\end{theorem}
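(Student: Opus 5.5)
The plan is to derive Theorem~\ref{theorem:TVC-alg} directly from Theorem~\ref{theorem-TVC-2}: compute the limiting normal cone $N_{S^{alg}}$ explicitly and substitute it into \eqref{eq:TVC-abstract-yes-p=}. Write $\bchi := (\bx_0, t_0, \bx_f, t_f, \bp)$. The first task is to show that, under a constraint qualification at the optimal $\bchi$, the normal cone of the box-constrained set has the form
\begin{equation*}
N_{S^{alg}}(\bchi) = \set{ \big(\partial_{\bchi}\be(\bchi)\big)^T \bnu :\ \bnu \dagger \be(\bchi) }.
\end{equation*}
The qualification is the standard Mangasarian--Fromovitz (or linear independence) condition on the gradients of the active components of $\be$. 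Once this representation holds, $\etab = (\partial_\bchi \be)^T\bnu$. Adding $\nu^0\,\partial_\bchi E$ then gives exactly $\partial_\bchi \overline{E}(\bnu,\bchi)$, with $\overline{E}$ as in \eqref{eq:Ebar=bydef}. Splitting this vector identity into its $\bx_0$, $t_0$, $\bx_f$, $t_f$ and $\bp$ blocks yields the five equations of \eqref{eq:TVC-alg}. The last block, \eqref{eq:notTVC}, is the Hofer--Sagirow condition that is already built into Theorem~\ref{theorem-TVC-2}.

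The normal cone computation proceeds in three steps. First, write $S^{alg} = \be^{-1}(B)$, where $B := \set{\by:\ \be^L \le \by \le \be^U}$ is a box. The normal cone of a box is elementary to compute. It is a product of one-dimensional cones, and each factor is determined by the position of $e_i$: $(-\infty,0]$ at an active lower bound, $\set{0}$ in the interior, $[0,\infty)$ at an active upper bound, and $\Real$ when $e_i^L = e_i^U$. This is precisely the sign pattern in \eqref{eq:complementarity}.

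Second, identify the tangent cone. Under the qualification condition, Lyusternik's theorem (or the implicit function theorem applied to the active constraints) gives
\begin{equation*}
T_{S^{alg}}(\bchi) = \set{\bd_\chi:\ \partial_\bchi\be(\bchi)\,\bd_\chi \in T_B(\be(\bchi))}.
\end{equation*}
Third, compute the polar cone by Farkas' lemma, which gives $T^*_{S^{alg}}(\bchi) = (\partial_\bchi\be)^T N_B(\be(\bchi))$. This is the same chain-rule representation the paper attributes to \cite{vinter,clarke-2013book}, and I would cite it there rather than reprove it in full generality.

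The main obstacle is that Definition~\ref{definition:NC} uses the limiting normal cone, a $\limsup$ of polar cones at nearby points $\bchi_k$, rather than just the polar cone at $\bchi$. So it must be shown that the limsup adds nothing. The argument I would try first relies on two facts. The qualification condition is an open condition, so it persists at $\bchi_k$ near $\bchi$, and the representation above holds there with multipliers $\bnu_k$. The qualification condition also bounds the $\bnu_k$ uniformly: if $\|\bnu_k\|\to\infty$, normalizing and passing to the limit produces a nonzero multiplier satisfying $(\partial_\bchi\be)^T\bnu = \bzero$ with the sign pattern of \eqref{eq:complementarity}, contradicting the qualification. Since $\be$ is $C^1$, a convergent subsequence of $\bnu_k$ then gives a limit $\bnu$, which satisfies the complementarity conditions at $\bchi$ because the active set can only shrink in the limit. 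Hence the limiting normal cone coincides with the Farkas representation at $\bchi$.

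If the qualification condition fails, the representation still holds in a Fritz John sense: the degenerate multiplier can be absorbed by allowing $\nu^0 = 0$. I would remark on this abnormal case rather than treat it in detail.
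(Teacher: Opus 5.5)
Your proposal is correct and follows the paper's route. The paper also obtains this theorem from Theorem~\ref{theorem-TVC-2} by identifying $\etab\in N_{S^{alg}}$ with $[\partial\be]^T\bnu$, $\bnu\dagger\be$ (Remark~\ref{remark:eta-nu-connection}), and simply cites \cite{vinter,clarke-2013book} for the normal-cone computation. Your constraint qualification, Lyusternik--Farkas step and $\limsup$ boundedness argument make that computation explicit, and they also supply the qualification the paper's statement leaves implicit.

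Two small fixes:
\begin{enumerate}
\item An index active at $\bchi_k$ stays active at $\bchi$, so the active set can only grow, not shrink, in the limit.
\item When the qualification fails, the normal-cone representation does not survive; only the conclusion does, and trivially. It is satisfied by $(\nu^0,\blam,\bnu)=(0,\bzero,\bnu)$ with $\bnu\neq\bzero$, $\bnu\dagger\be$ and $[\partial\be]^T\bnu=\bzero$. This forces $\blam\equiv\bzero$ as well, not just $\nu^0=0$.
\end{enumerate}
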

\begin{remark}\label{remark:eta-nu-connection}
Comparing \eqref{eq:TVC-alg} with \eqref{eq:TVC-abstract-yes-p=} it follows that $\etab$ (i.e., a vector in the normal cone; see Definition~\ref{definition:NC}) encapsulates, in one fell swoop, the combined notion of complementarity conditions (see \eqref{eq:complementarity}) and the idea of expressing a vector as a linear combination of the active gradients of the individual endpoint constraint functions $e_i(\bx_0, t_0, \bx_f, t_f, \bp), \ i = 1, \ldots, N_e$ according to the following:
\begin{multline}\label{eq:eta-meaning}
\etab \in N_{S^{alg}} :=  \left\{ \bxi\in \real{N_x} \times \Real \times \real{N_x} \times \Real \times \real{N_p}:\ \right.\\
\left. \bxi = \big[\partial_{(\bx_0, t_0, \bx_f, t_f, \bp)} \be( \bx_0, t_0, \bx_f, t_f, \bp)\big]^T \bnu,\   \bnu \dagger \be( \bx_0, t_0, \bx_f, t_f, \bp)  \right\}
\end{multline}
\end{remark}
\begin{remark}
If the endpoint constraint function is given by a box constraint, then $\bxi$ in \eqref{eq:eta-meaning} reduces to $\bnu$.  In this case, $\bnu$ can be visualized as a vector in the normal cone to a point in a box as illustrated in Fig.~\ref{fig:boxNC}.  Hence, when we encounter this special case, we may use the symbols $\etab$ and $\bnu$ interchangeably.
\end{remark}
%
\begin{figure}[h!]
      \centering
      {\parbox{0.9\columnwidth}{
      \centering
      {\includegraphics[width = 0.5\columnwidth]{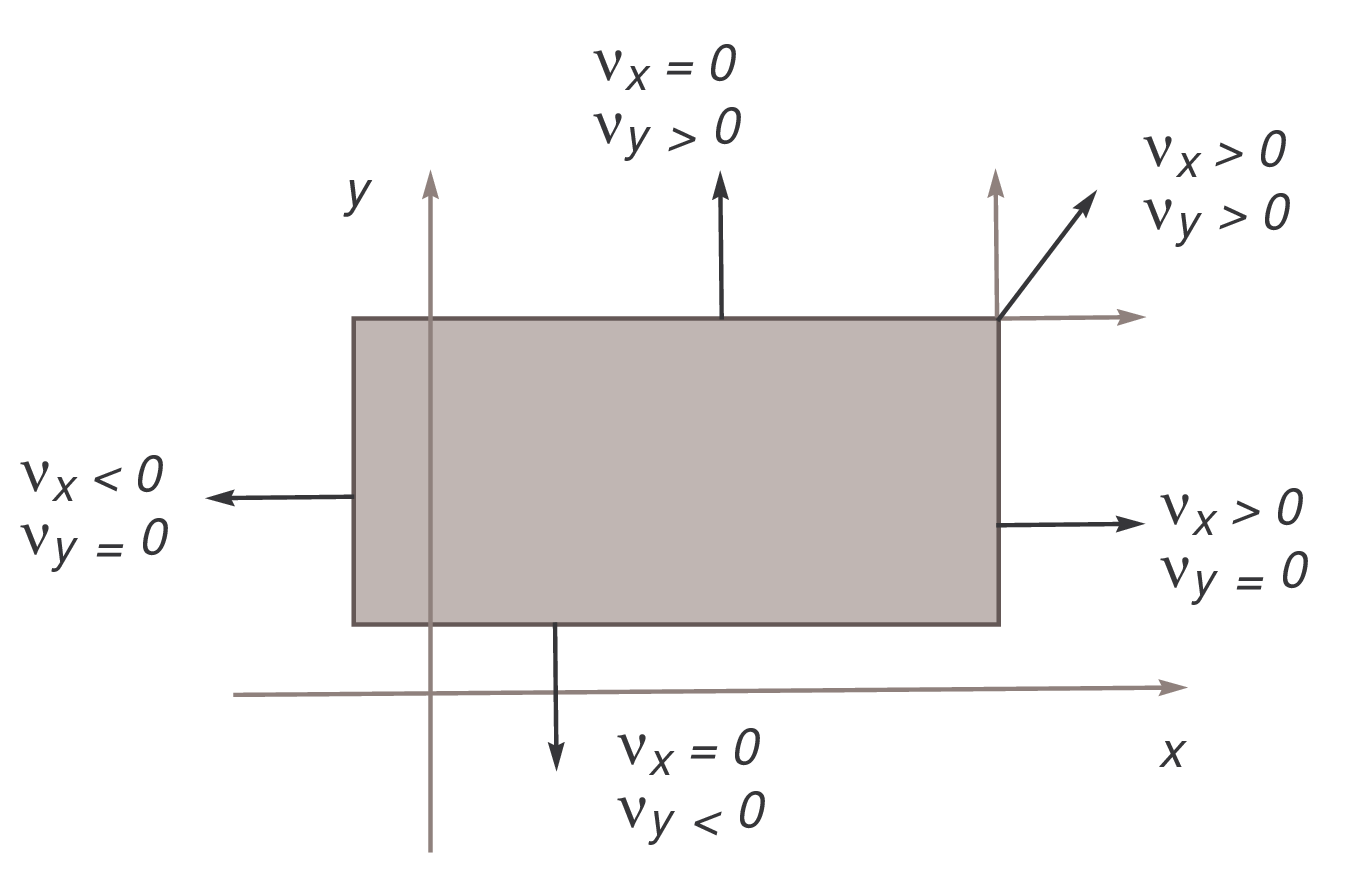}}
      \caption{{Normal cones and vectors at various points on a box constraint.}}\label{fig:boxNC}
      }
      }
\end{figure}

Theorem~\ref{theorem:TVC-alg} is the computational version of Theorem~\ref{theorem-TVC-2} for the special case of $S=S^{alg}$ where  $S^{alg}$ is given in terms of algebraic equations.  Although Theorem~\ref{theorem:TVC-alg} is presented in this section as a special case of Theorem~\ref{theorem-TVC-2}, the latter is a generalization of the former from a historical perspective\cite{AG-1973,kyber-81}.
\begin{remark}\label{remark:TVC=+p}
As noted in the statement of Theorem~\ref{theorem:TVC-alg}, \eqref{eq:notTVC} is technically not a transversality condition. Nonetheless, it is apparent that \eqref{eq:notTVC} is integral to the transversality conditions as it helps determine $\bp$.  Furthermore, because the necessary condition for $\bp$ cannot be separated, in general, from the broader transversality condition (see \eqref{eq:TVC-abstract-yes-p} and \eqref{eq:TVC-abstract-yes-p=}) we group it with \eqref{eq:TVC-alg}.
\end{remark}
\begin{remark}\label{rem:KKT}
If the Hamiltonian does not depend upon $\bp$, then $\bsigma = \bzero$; see \eqref{eq:sigma=bydef}.  Then \eqref{eq:notTVC} reduces to the familiar KKT conditions\cite{NW:NumOptBook} for parameter optimization.  As an interesting side note, the connection between transversality conditions and the KKT conditions has been utilized in \cite{rossJCAM-1,rossJCAM-2,ross-CD,ross:JNVA} to construct an optimal control theory for optimization.
\end{remark}

\section{Foundations of Boundary Conditions Defined by Differential Equations}
\label{sec:newBCs}

The transversality conditions that are of main interest in this paper are those conditions when $S$  is given by $S^{ode+}$, an endpoint set defined by differential equations with side conditions.  To better understand what we mean by ``differential equations with side conditions,'' it is instructive to develop a definition of $S^{ode+}$ in stages of increasing intricacies rather than all at once.

\subsection{A Basic Set Parameterized by a Differential Equation }
Consider an initial-value problem (IVP) corresponding to a non-autonomous differential equation given by,
\begin{equation}\label{eq:ivp-a}
\frac{d\bx}{ds} := \bx' = \bg(\bx, s), \quad \bx(s^\sharp) = \bx^\sharp
\end{equation}
where $(\bx^\sharp, s^\sharp) \in \real{N_x} \times \Real $ is a given point in ``space'' and ``time.''  We choose the symbol $s$ for the independent variable instead of $t$ in order to reserve the latter notation for the clock time implied in Section~\ref{sec:reviewTVC}.  The distinctions between $s$ and $t$ will become more apparent in the discussion to follow. Where convenient, we will refer to $s$ as $s$-time or local $s$-time.

If the function $\bg$ satisfies the conditions of the Picard-Lindel\"{o}f theorem\cite{teschl-ode,hale-ode}, then there is a unique solution to \eqref{eq:ivp-a} that passes through the point $(\bx^\sharp, s^\sharp)$; see Fig.~\ref{fig:ODECurve1}.
%
\begin{figure}[h!]
      \centering
      {\parbox{0.9\columnwidth}{
      \centering
      {\includegraphics[width = 0.4\columnwidth]{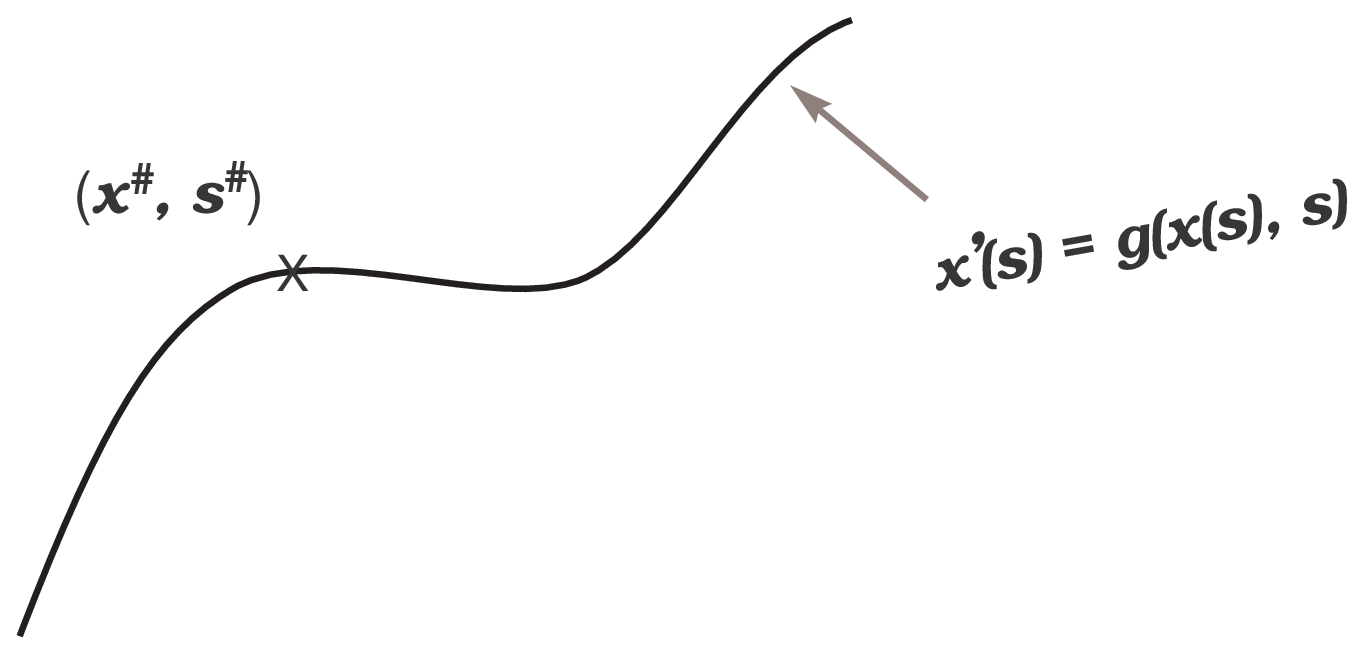}}
      \caption{{Schematic of a ``curve'' corresponding to \eqref{eq:ivp-a}.}}\label{fig:ODECurve1}
      }
      }
\end{figure}
This solution can be described as the set of all points in $\real{N_x} \times \Real$ that satisfies \eqref{eq:ivp-a}.  This set is a ``curve'' parameterized by $s$ and can be written as a ``point-to-set'' map:
\begin{equation}\label{eq:sol-1}
C(\bx^\sharp, s^\sharp) := \set{(\bx(s), s) \in \real{N_x} \times \Real:\ \bx'(s) = \bg(\bx(s), s),  \quad \bx(s^\sharp) = \bx^\sharp}
\end{equation}
The notation $C(\bx^\sharp, s^\sharp)$ implies a point $(\bx^\sharp, s^\sharp)$ must be given (i.e., a side condition) to construct the set defined in the right hand side of \eqref{eq:sol-1}.
%
\begin{remark}\label{rem:picard-infty}
Fig.~\ref{fig:ODECurve1} and \eqref{eq:sol-1} appear to suggest that the solution to \eqref{eq:ivp-a} is valid for infinite $s$-time forwards and backwards; i.e., $s \in (-\infty, \infty)$. This strong assumption is only tentative.  We can easily add bounds to the independent variable, $s$, according to $s^L \le s \le s^U$, where $s^L < s^U \in \Real$.  Furthermore, we may easily incorporate additional rules to $s^L$ and $s^U$ such as dependencies on $(\bx^\sharp, s^\sharp)$ as done in typical textbooks on differential equations\cite{hale-ode,teschl-ode}. In order to construct the ideas piecemeal, we ignore such technicalities in favor of clarity.
\end{remark}
%
From Remark~\ref{rem:picard-infty} it follows that \eqref{eq:sol-1} is not sufficient to define $S^{ode}$. We need one or more side conditions in order to fortify a definition of an endpoint set given by differential equations. 

\subsection{A Manifold Parameterized by Differential Equations }
Now consider another set $P \subset \real{N_x} \times \Real$ from where we pick different initial conditions $(\bx^\sharp, s^\sharp) \in P$.  That is, consider now a new set that generalizes \eqref{eq:sol-1} as follows:
\begin{multline}\label{eq:sol-a}
M := \set{(\bx(s), s) \in C(\bx^\sharp, s^\sharp), (\bx^\sharp, s^\sharp) \in P}\\
  := \set{(\bx(s), s, \bx^\sharp, s^\sharp) \in \left(\real{N_x} \times \Real\right) \times P:\ \bx'(s) = \bg(\bx(s), s),  \quad \bx(s^\sharp) = \bx^\sharp }
\end{multline}
The construction of $M$ is as follows: Pick a point $(\bx^\sharp, s^\sharp) \in P$ and construct $C(\bx^\sharp, s^\sharp)$ according to \eqref{eq:sol-1}.  Pick a different point $(\bx^\sharp, s^\sharp) \in P$ and repeat the process as illustrated in Fig.~\ref{fig:ODECurve2}.
%
\begin{figure}[h!]
      \centering
      {\parbox{0.9\columnwidth}{
      \centering
      {\includegraphics[width = 0.5\columnwidth]{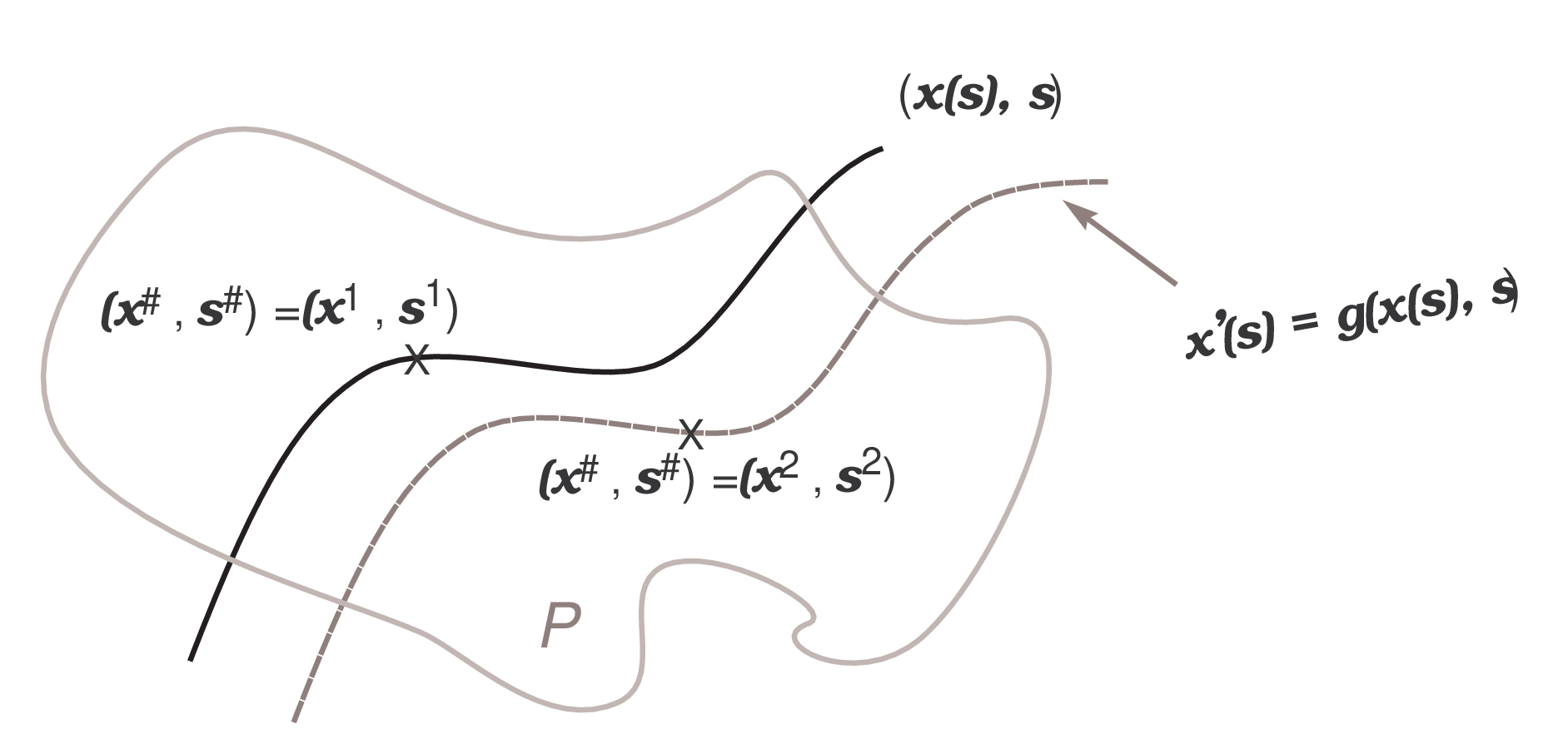}}
      \caption{{Schematic illustrating \eqref{eq:sol-a} as a model for a manifold described by an ODE.}}\label{fig:ODECurve2}
      }
      }
\end{figure}
%
By arguments similar to those of the previous paragraphs, we now have solutions passing through each point in $P$.   Fig.~\ref{fig:ODECurve2} illustrates these ideas as well as a particular rendition of \eqref{eq:sol-a}.  In this illustration, $M \supset C(\bx^\sharp, s^\sharp)$.  Note also that $M$ is not necessarily the same as $P$ as illustrated in Fig.~\ref{fig:ODECurve2}.  Typically, $P$ is contained in $M$.  In \eqref{eq:sol-a}, $P$ is a side condition.  The set of points $(\bx^\sharp, s^\sharp) \in P$ are the initial condition for \eqref{eq:ivp-a}.  These points do not constitute nor limit the candidate initial conditions $(\bx_0, t_0)$ to the set $P$. To sharpen these distinctions, we call $(\bx^\sharp, s^\sharp)$ the phantom initial conditions. 
%
\begin{definition}[phantom initial conditions]\label{def:phantom}
  The set of points $(\bx^\sharp, s^\sharp) \in P$ that generate the flow of the ODE $\bx'(s) = \bg(\bx(s), s)$ are called the phantom initial conditions.
\end{definition}
%

Although $M$ generalizes $C$, it is not yet complete to describe our first goal of defining $S^{ode+}$. To further a definition of $S^{ode+}$, we need a mechanism to connect the local $s$-time with the transfer $t$-time.  To produce this connection, we develop the concept of coordinated and uncoordinated clock times.

\subsection{Coordinated and Uncoordinated Clock Times}

Consider the mathematical problem of defining an initial point $(\bx_0, t_0)$ (of an optimal control problem) in terms of differential equations.  As indicated earlier, the $s$-time in \eqref{eq:ivp-a} and the $t$-time in Section~\ref{sec:reviewTVC} are not necessarily the same. The following example amplifies this point.
\begin{example}\label{ex:1}
Suppose $\bx = (x, y) \in \real{2}$ and we require $\bx_0 = \bx(t_0)$ in an optimal control problem to be selected from a unit circle so that the initial conditions are required to satisfy the constraint, $x^2_0 + y^2_0 = 1$.  See Fig.~\ref{fig:unitCircle}.  This circle can also be parameterized as  $x_0(t) = \sin(t)$ and $y_0(t)=\cos(t)$. Obviously, $t$ in this parametrization has no bearing on the clock time $t$ (or $t_0$).
\end{example}
%
\begin{figure}[h!]
      \centering
      {\parbox{0.9\columnwidth}{
      \centering
      {\includegraphics[width = 0.4\columnwidth]{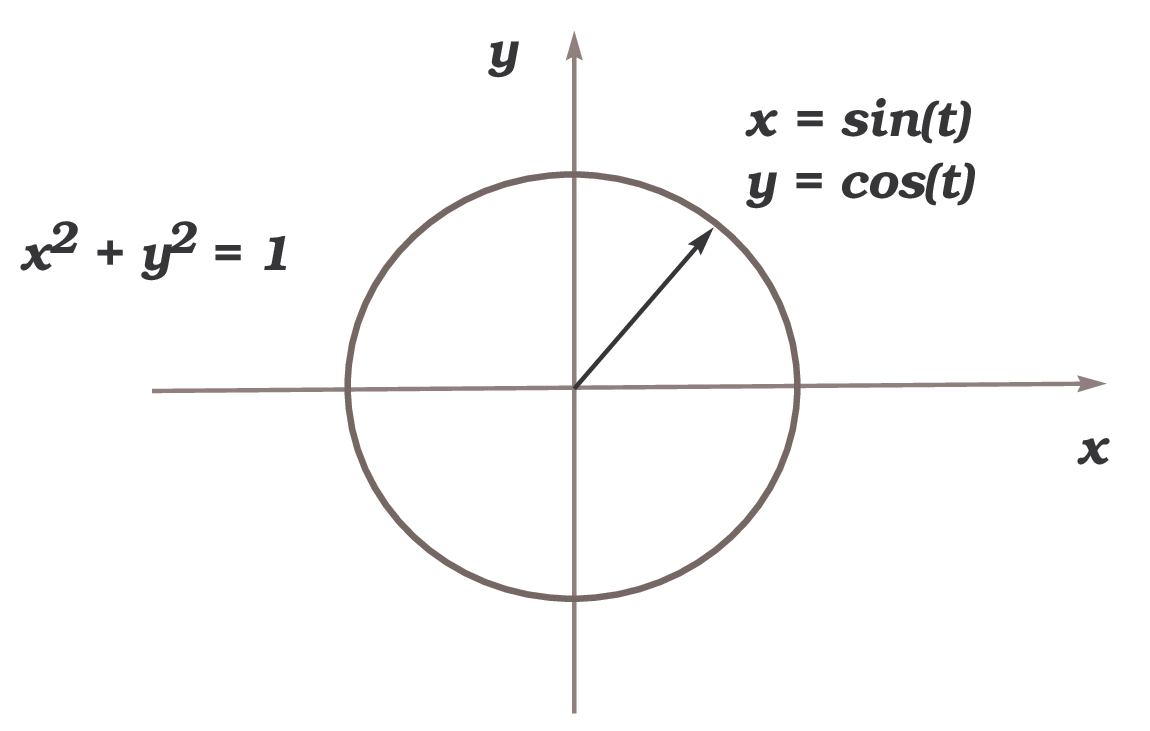}}
      \caption{{Two different parameterizations of a $1$-dimensional manifold.}}\label{fig:unitCircle}
      }
      }
\end{figure}
In the case of Example~\ref{ex:1} when $t$ is not the same as the clock time, the unit circle is parameterized as $(\sin(s), \cos(s))$ instead of $(\sin(t), \cos(t))$.  Using the symbol $s$ instead of $t$ addresses the problem of keeping $s$ and $t$ distinct; however, it requires a few caveats and some additional clarity in notation.  To understand this point, suppose we write,
\begin{equation}\label{eq:x0t0prob-1}
x_0(s) = \sin(s), \quad y_0(s) = \cos(s)
\end{equation}
Then substituting $s= t_0$ in \eqref{eq:x0t0prob-1} generates $x_0(t_0) = \sin(t_0), y_0(t_0) = \cos(t_0)$ which might (incorrectly) be construed to imply that the only valid value of the initial conditions is at $s=t_0$. But note, however, the symbolic difference between $x_0(t_0)$ and $x(t_0)$.  Per \eqref{eq:x0xf-abbrv}, $x(t_0)$ is just $x_0$. The symbol $x_0(t_0)$ represents an allowable value of $x_0$ at $s = t_0$ when it is parameterized as $x_0(s)$.  That is, $x_0(s)$ is a new symbol that represents \emph{all} allowable values of $x_0 = x(t_0)$.   Because $x_0$ itself is parameterized by $s$-time, we use the symbol $(x_0, y_0)$ (resp. $(x_f, y_f)$) without its argument $t_0$ (resp. $t_f$) to rewrite \eqref{eq:x0t0prob-1} as,
\begin{equation}\label{eq:x0t0prob-solved}
x_0 = x_0(s) \Big(:= \sin(s)\Big), \quad y_0 = y_0(s) \Big(:= \cos(s) \Big)
\end{equation}
In other words, the symbol $\bx_0$ implies $\bx(t_0)$ as always but $\bx_0(s)$ implies a parametrization of $\bx_0$ (via the independent variable $s$).  Hence, in \eqref{eq:x0t0prob-solved} $s$ can take any value via the notation $x_0(s)$ and $y_0(s)$; and, no matter the value of $s$, the outputs of $x_0(s)$ and $y_0(s)$, denoted by $x_0$ and $y_0$ respectively, remains on the unit circle.
\begin{remark}\label{rem:x0snotation}
Equation~(\ref{eq:x0t0prob-solved}) suggests that it might be more prudent to use a different symbol (other than $x_0(s), y_0(s)$) for the right-hand-side of the equalities.  Although such notational change might add clarity for this specific purpose, it also has the effect of obscuring the meaning of prior equations (and the ones to follow).  For instance, if the symbol $\bx$ was not used in \eqref{eq:ivp-a} as a means to add extra clarity to \eqref{eq:x0t0prob-solved}, then, the IVP would read,
$$ \frac{d\by}{ds} := \by' = \bg(\by, s), \quad \by(s^\sharp) = \bx^\sharp $$
with the understanding that $\by$ is a proxy for $\bx_0$.
\end{remark}

Returning to the problem at hand, it is also possible to construct optimal control problems wherein $s$ and $t$ are indeed coordinated clock times. Practical examples of such problems are those pertaining to intercept and rendezvous\cite{brysonHo,longuski,ross-book}.  In the case of Example~\ref{ex:1} for instance, it is quite possible that the unit circle may indeed be generated by a physical system going around in a circle. In this situation, the $s$-clock time will certainly matter (mod $2\pi$). To allow for both possibilities, namely, $s$ and $t$ being coordinated or uncoordinated, we define the following:
\begin{definition}[Coordinated and Uncoordinated Clock Times]\label{def:phi} Let $\phi: \Real \times \Real \to \Real$ be a continuously differentiable function of its arguments such that $\phi(s, t) = 0$ has a real-valued unique solution for $s$ in terms of $t$ and vice versa.  If  $\phi \not\equiv 0$, then we say the $s$-time and $t$-time are coordinated (via the function $\phi$).  Alternatively, if $\phi \equiv 0$, then we say the $s$-time and $t$-time are uncoordinated.
\end{definition}
A simple example of  $\phi\not\equiv 0$ is $\phi(s, t) = s-t$.  This particular situation corresponds to the clock time being synchronized.
\begin{definition}[Synchronized Clock Time]\label{def:sync-bc}
The clock times $s$ and $t$ are said to be synchronized if  $\phi(s, t) = s-t$.
\end{definition}

\subsection{A Preliminary Mathematical Model for Endpoint Conditions Governed by Differential Equations}
Using the preceding ideas, we return to the problem of modeling endpoint conditions of an optimal control problem in terms of differential equations. In view of Definition~\ref{def:phi}, we now construct a new set $\widetilde{M}$ based on \eqref{eq:sol-a} as follows:
\begin{equation}\label{eq:pre-S}
\widetilde{M} := \set{(\bx(s), s, \bx^\sharp, s^\sharp) \in M,\ t \in \Real:\   \phi(s, t) = 0}
\end{equation}
Note the following in contrasting $M$ with $\widetilde{M}$:
\begin{enumerate}
\item $s$ and $t$ may be coordinated via $\phi$.  If $\phi \equiv 0$, then $s$ and $t$ are uncoordinated.
\item If $\phi \equiv 0$, then $t$ is all of $\Real$ according to \eqref{eq:pre-S}.  To contain $t$, it is apparent that an additional ``side'' constraint, such as $t \in I \subset \Real$ must be included in \eqref{eq:pre-S}.
\item In view of Remark~\ref{rem:picard-infty}, it is necessary to further constrain $s$ in \eqref{eq:pre-S}.
\end{enumerate}
These points illustrate the fact that additional side conditions must be imposed on $\widetilde{M}$ in order to achieve our first goal of defining $S^{ode+}$.  Because the gap between $\widetilde{M}$ and $S^{ode+}$ is sufficiently small, we use the preceding constructs in the next section to define new boundary conditions for an optimal control problem.

\section{A Formulation of Endpoint Conditions Given by Differential Equations}

Taking all of the considerations of Section~\ref{sec:newBCs} into account and using \eqref{eq:pre-S} as a starting point, we now define initial and final sets for the endpoint constraints of an optimal control problem in their primitive, constituent form in the following manner:
\begin{subequations}\label{eq:S0+Sf-def}
\begin{multline}\label{eq:S0=def}
S_0^{ode+} := \Big\{(\bx_0, t_0, s_a, \bx^\sharp_a, s^\sharp_a):\ \bx_0 = \bx_0(s_a), \quad \bx_0'(s_a) = \bg_0(\bx_0(s_a), s_a), \quad \bx_0(s^\sharp_a) = \bx^\sharp_a, \\
(\bx^\sharp_a, s^\sharp_a) \in P_a, \quad
\phi_0(s_a,t_0)=0,  \quad t_0 \in I_0, \quad s_a \in Q_a \Big\}
\end{multline}
\begin{multline}\label{eq:Sf=def}
S_f^{ode+} := \Big\{(\bx_f, t_f, s_b, \bx^\sharp_b, s^\sharp_b):\  \bx_f = \bx_f(s_b), \quad \bx_f'(s_b) = \bg_f(\bx_f(s_b), s_b), \quad \bx_f(s^\sharp_b) = \bx^\sharp_b, \\
(\bx^\sharp_b, s^\sharp_b) \in P_b, \quad
\phi_f(s_b, t_f)=0,  \quad t_f \in I_f, \quad s_b \in Q_b \Big\}
\end{multline}
\end{subequations}
In \eqref{eq:S0+Sf-def}, $I_0 \subset \Real$ and $I_f \subset \Real$ are intervals of allowable clock times for $t_0$ and $t_f$ respectively.  Similarly, $Q_a \subset \Real$ and $Q_b \subset \Real$ are intervals of allowable local $s$-times for $s_a$ and $s_b$ respectively.  A schematic for visualizing these sets is shown in Fig.~\ref{fig:ODECurve3}.
%
\begin{figure}[h!]
      \centering
      {\parbox{0.95\columnwidth}{
      \centering
\subfigure[Illustration of some symbols used to define $S_0^{ode+}$ in \eqref{eq:S0=def} ]{
      {\includegraphics[width = 0.43\columnwidth]{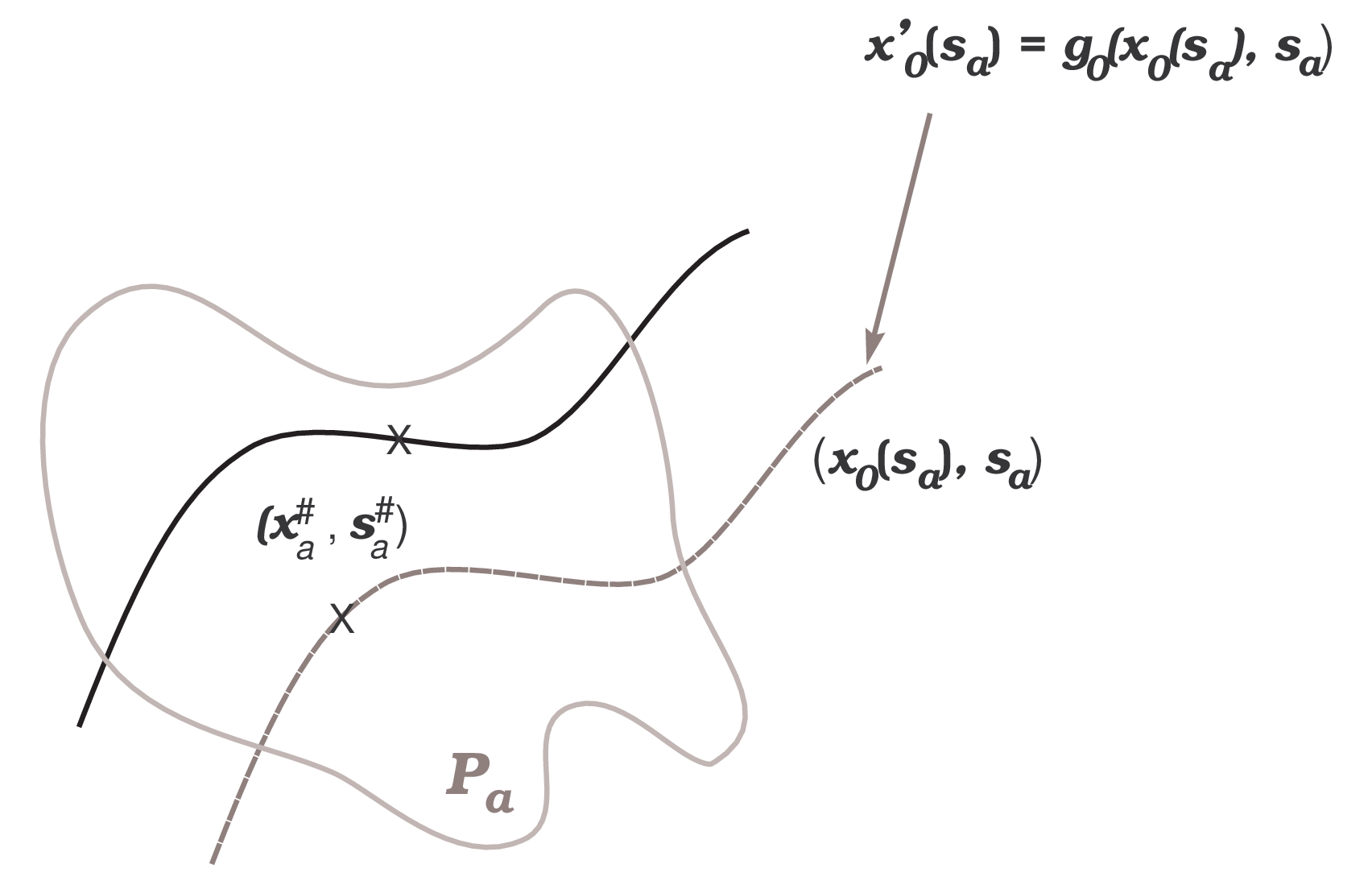}}
}
\hfill
\subfigure[Illustration of some symbols used to define $S_f^{ode+}$ in \eqref{eq:Sf=def}]{
      {\includegraphics[width = 0.43\columnwidth]{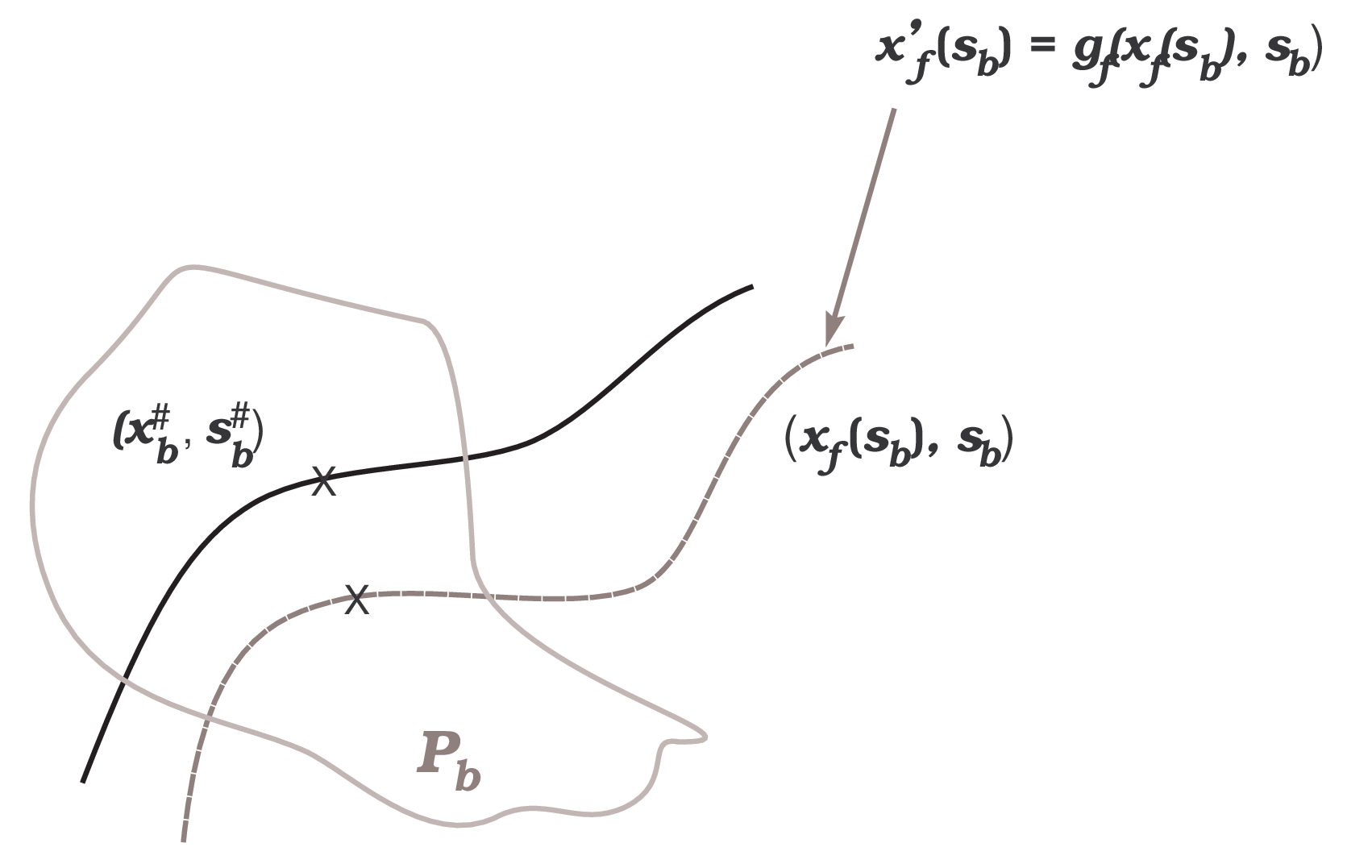}}
}
      \caption{{A schematic for visualizing the sets $S_0^{ode+}$ and $S_f^{ode+}$ defined in \eqref{eq:S0=def} and \eqref{eq:Sf=def} respectively.}}\label{fig:ODECurve3}
      }
      }
\end{figure}
%

Additional explanations with regards to \eqref{eq:S0+Sf-def} are in order:
\begin{enumerate}
\item Dropping subscripts $a$ and $b$ in \eqref{eq:S0=def} and \eqref{eq:Sf=def} respectively, we get the following fundamental set that extends $\widetilde{M}$ defined in \eqref{eq:pre-S}:
\begin{multline}\label{eq:Sg=}
S_g^{ode+} := \Big\{(\bx, t, s, \bx^\sharp, s^\sharp):\ \bx = \bx(s), \quad \bx'(s) = \bg(\bx(s), s), \quad \bx(s^\sharp) = \bx^\sharp, \\
(\bx^\sharp, s^\sharp) \in P, \quad
\phi(s, t)=0,  \quad t \in I, \quad s \in Q \Big\}
\end{multline}
\item Comparing \eqref{eq:Sg=} with \eqref{eq:pre-S} it follows that the former differs from the latter in two ways:
    \begin{enumerate}
        \item An introduction of the equation $\bx = \bx(s)$ to denote the difference between the variable $\bx$ and its parametrization given by $\bx(s)$ along the lines of \eqref{eq:x0t0prob-solved}.  This equation links the remainder of the conditions that defines $S_g^{ode+}$.
        \item The inclusion of two constraints for the clock times $t$ and $s$ in terms of the interval sets $I$ and $Q$ respectively.
    \end{enumerate}
In other words, \eqref{eq:Sg=} (and hence, \eqref{eq:S0=def} and \eqref{eq:Sf=def}) is simply a culmination of the process that began with \eqref{eq:ivp-a}.
\item The centerpiece of \eqref{eq:Sg=} is the differential equation $\bx'(s) = \bg(\bx(s), s)$. In Section~\ref{sec:Intro}, there was an implication that this differential equation was generated from $\dot\bx = \bff(\bx, \bzero, t)= \bg(\bx, t)$ with $t$ replaced by $s$.  That is, the vector field $\bg$ is simply the result of no control action in $\bff$.  Note, however, that $\bg$ in \eqref{eq:Sg=} (and hence $\bg_0$ in \eqref{eq:S0=def} and $\bg_f$ in \eqref{eq:Sf=def}) is more general than no control action.  In fact, the initial set may be defined by an open- or closed-loop control action $\bu = \bk(\bx, t)$, in which case we have $\bg(\bx, t) := \bff(\bx, \bk(\bx, t), t)$.  See, for example, \cite{artificalHalo-2022} and \cite{LTorbits-2020} for practical examples of ``artificial'' manifolds. Thus, if the initial and final sets were governed by two different control strategies, say, $\bk_0(\bx, t)$ and $\bk_f(\bx, t)$, then, we would indeed have two different $\bg$-functions, $\bg_0$ and $\bg_f$ corresponding to $\bk_0$ and $\bk_f$ respectively.  Equations~(\ref{eq:S0=def}) and (\ref{eq:Sf=def}) allows for this practical possibility.
\item The arguments of $\bg_0$ and $\bg_f$ are different in \eqref{eq:S0=def} and \eqref{eq:Sf=def} because these variables apply only to $\bx_0$ and $\bx_f$ respectively. For the same reason, the $s$-time variables are also different and denoted by $s_a$ and $s_b$.
\item The $\phi$ functions are also different in the definitions of $S_0$ and $S_f$ to allow for the possibility that the initial set might just be a manifold where $s_a$ is not coordinated with $t_0$ but the final set might correspond to a rendezvous where $s_b$ and $t_f$ would need to be synchronized.
\item In principle, the constraint $s \in Q$ in \eqref{eq:Sg=} (and hence the constraints $s_a \in Q_a$ and $s_b \in Q_b$ in \eqref{eq:S0=def} and \eqref{eq:Sf=def} respectively) addresses the technicality associated with the validity of the Picard-Lindel\"{o}f theorem.  See Remark~\ref{rem:picard-infty}.  Nonetheless, this constraint is quite practical in situations such as the one shown in Fig.~\ref{fig:unitCircle} where it is sufficient for $s$ to be in the interval $[0, 2\pi]$.
\item Because $s^\sharp$ is a specific value of $s$, it is implicitly required to satisfy the constraint $s^\sharp \in Q$.  Hence, an explicit specification of this constraint is excluded in the definition of $S_g^{ode+}$.
\item The constraint $t \in I$ in \eqref{eq:Sg=} (and hence, $t_0 \in I_0$ and $t_f \in I_f$ in \eqref{eq:S0=def} and \eqref{eq:Sf=def} respectively) is included for practical operational reasons.  Consider, for example, $t_0 \in I_0$.  The initial time $t_0$ in a real-world space operation may be constrained for a myriad of reasons such as the availability of communication links to perform an operation, completion of a prior operation and so on.
\item In \eqref{eq:S0=def} and \eqref{eq:Sf=def}, we implicitly assume the time intervals $I_0$ and $I_f$ are chosen by the modeler/operator in such a way that there are no obvious conflicts in clock times, such as, for example, requiring $t_0$ to be greater than $t_f$.  Let $I_0$ and $I_f$ be defined by,
    \begin{equation}\label{eq:I0Ifdef}
    I_0:= \set{t_0 \in \Real:\ t_0^L \le t_0 \le t_0^U} \quad  I_f:= \set{t_f \in \Real:\ t_f^L \le t_f \le t_f^U}
    \end{equation}
    where $(t_0^L, t_f^L)$ and $(t_0^U, t_f^U)$ denote the lower and upper bounds on the initial and final clock times respectively. We implicitly assume $t_0^U \ge t_0^L$ and $t_f^U \ge t_f^L$.  An equality in the prior sentence corresponds to a choice of fixed clock times.  Thus, a ``sufficient'' practical condition for having no conflicts in clock times is for the modeler/operator to choose $t_f^L > t_0^U$.  Or alternatively $I_0 \cap I_f = \emptyset$.
\end{enumerate}
%

\begin{remark}\label{rem:picard-cara}
Many guidance and control systems involve discontinuous control functions $t \mapsto \bu(t)$\cite{ross-book}. Consequently, if the function $\bg_0$ (resp. $\bg_f$) in \eqref{eq:S0=def} (resp. \eqref{eq:Sf=def}) is generated by a discontinuous control function, then  $\bg_0$ (resp. $\bg_f$) will be discontinuous with respect to the independent variable $s_a$ (resp. $s_b$). In this case, $\bg_0$ (resp. $\bg_f$) will not satisfy the conditions of the Picard-Lindel\"{o}f theorem.  However, the process of generating \eqref{eq:S0=def} (resp. \eqref{eq:Sf=def}) starting from \eqref{eq:ivp-a} still holds with the caveat that $\bg_0$ (resp. $\bg_f$) must now satisfy the Carath\'{e}odory conditions\cite{hale-ode}.
\end{remark}

From the preceding discussions, it follows that the triple, $(s, \bx^\sharp, s^\sharp)$, are parameters that must be defined and properly constrained in order to construct $S^{ode+}$.  This point explains why Theorem~\ref{theorem-TVC-2} will form the basis of the development of new transversality conditions.  Before proceeding to develop the new transversality condition, we formalize the definition of $S^{ode+}$.

Let $\bp_a:= (s_a, \bx_a^\sharp, s_a^\sharp)$ and $\bp_b:= (s_b, \bx_b^\sharp, s_b^\sharp)$ denote the parameters defined in \eqref{eq:S0=def} and \eqref{eq:Sf=def} respectively.  Then, according to \eqref{eq:S0+Sf-def}, the constraints on $\bp_a$ and $\bp_b$ are given by the Cartesian products,
\begin{subequations}\label{eq:pa+pb-constr}
  \begin{align}
  \bp_a &:=  (s_a, \bx^\sharp_a, s^\sharp_a) \in (Q_a \times P_a) \\
  \bp_b &:=  (s_b, \bx^\sharp_b, s^\sharp_b) \in (Q_b \times P_b) 
  \end{align}
\end{subequations} 
Define,
\begin{equation}\label{eq:Sode+=bydef}
  S^{ode+} :=  S_0^{ode+} \times  S_f^{ode+}
\end{equation}
where $S_0^{ode+}$ and  $S_f^{ode+}$ are given by \eqref{eq:S0=def} and \eqref{eq:Sf=def} respectively. Then the boundary conditions defined by differential equations with side conditions are given by,
\begin{equation}\label{eq:bc=ode+}
  (\bx_0, t_0, \bp_a; \bx_f, t_f, \bp_b) \in S^{ode+}
\end{equation}

\section{The New Transversality Conditions}\label{sec:newTVCs}

To formally state the new transversality conditions, we need the concept of a weak adjoint covector.  This idea is best understood by contrasting it with the usual (i.e., strong) definition of an adjoint covector associated with a differential equation $\bx'(\xi) = \bg(\bx(\xi), \xi)$ defined over $\xi \in [0, \tau]$. To support the definition of a weak adjoint covector, we use the symbol $\mathbf{1}_{[0, s]}(\xi)$ to denote an indicator function defined by,
\begin{equation}\label{eq:indicatorDef}
  \mathbf{1}_{[0, s]}(\xi) := \left\{
                                           \begin{array}{ll}
                                             \mathbf{1} & \text{if }  \xi \in [0, s] \\
                                             \bzero & \text{otherwise}
                                           \end{array}
                                         \right.
\end{equation}
%
\begin{definition}[strong adjoint covector]
 A strong adjoint covector is a function $\xi \mapsto \blam(\xi)$ in dual space that satisfies the pointwise (strong) constraint functional equation,
$$ \int_{0}^{\tau} \blam^T(\xi) \big(\bx'(\xi) - \bg(\bx(\xi), \xi) \big)\, d\xi  = 0$$
for all trajectories $\xi \mapsto \bx(\xi)$ that satisfy the differential constraint  $\bx'(\xi) - \bg(\bx(\xi), \xi) = \bzero$. 
\end{definition}
\begin{definition}[weak adjoint covector]\label{def:weakAdj}
Let $\odot$ denote a Hadamard product and $\bpsi$ a constant covector.  A weak adjoint covector is a family of dual space functions $\xi \mapsto \widehat{\blam}(\xi) := {\bpsi}\odot \mathbf{1}_{[0, s]}(\xi)$ parameterized by $\bpsi$ and indexed by $s$ that satisfy the weak constraint functional equation,
\begin{equation}\label{eq:weakAdj-def}
   \int_{0}^{s} \widehat{\blam}^T(\xi) \big(\bx'(\xi) - \bg(\bx(\xi), \xi) \big)\, d\xi  = 0 \quad \forall\ s \in (0, \tau]
\end{equation}
for all trajectories $\xi \mapsto \bx(\xi)$ that satisfy the differential constraint  $\bx'(\xi) - \bg(\bx(\xi), \xi) = \bzero$ over almost all $\xi \in [0, \tau]$. 
\end{definition}
%
Note that our use of the word weak does not imply the differential equations are imposed loosely.  The test of \eqref{eq:weakAdj-def} is performed over all windows in $(0, \tau]$ by the requirement $\forall\ s \in (0, \tau]$.  A single test is weak but an infinite family of weak tests over every possible window enforces the same result as the strong form but achieves it using an alternative mathematical framework. In the rest of this paper, it will be convenient to use the term weak adjoint covector to denote the constant vector $\bpsi$ instead of $\widehat{\blam}(\xi) := {\bpsi}\odot \mathbf{1}_{[0, s]}(\xi)$.

In preparing to formulate the new transversality conditions, we first note that the parameters $\bp_a$ and $\bp_b$ are exclusive to the boundary conditions.  Hence, they do not appear in the dynamics function, $\bff$. As a result, the vector $\bsigma$ defined in \eqref{eq:sigma=bydef} and denoted here as $\bsigma_{ab}$ simplifies to,
\begin{equation}\label{eq:sigma=bydef-2}
\bsigma_{ab}:=  
\left[
    \begin{array}{c}
      \sigma_{s_a}\\
      \bsigma_{\bx^\sharp_a} \\
      \sigma_{s^\sharp_a} \\
      \sigma_{s_b} \\
      \bsigma_{\bx^\sharp_b} \\
      \sigma_{s^\sharp_b}\\
    \end{array}
  \right] :=
\left[
\begin{array}{c}
\nu^0\,\displaystyle\int_{t_0}^{t_f} \partial_{s_a} F(\bx(t), \bu(t), t; s_a, \bx^\sharp_a, s^\sharp_a, s_b, \bx^\sharp_b, s^\sharp_b)\, dt \\[1.5em]
\nu^0\,\displaystyle\int_{t_0}^{t_f} \partial_{\bx^\sharp_0} F(\bx(t), \bu(t), t; s_a, \bx^\sharp_a, s^\sharp_a, s_b, \bx^\sharp_b, s^\sharp_b)\, dt \\[1.5em]
\nu^0\,\displaystyle\int_{t_0}^{t_f} \partial_{s_a^\sharp} F(\bx(t), \bu(t), t; s_a, \bx^\sharp_a, s^\sharp_a, s_b, \bx^\sharp_b, s^\sharp_b)\, dt \\[1.5em]
\nu^0\,\displaystyle\int_{t_0}^{t_f} \partial_{s_b} F(\bx(t), \bu(t), t; s_a, \bx^\sharp_a, s^\sharp_a, s_b, \bx^\sharp_b, s^\sharp_b)\, dt \\[1.5em]
\nu^0\,\displaystyle\int_{t_0}^{t_f} \partial_{\bx^\sharp_b} F(\bx(t), \bu(t), t; s_a, \bx^\sharp_a, s^\sharp_a, s_b, \bx^\sharp_b, s^\sharp_b)\, dt \\[1.5em]
\nu^0\,\displaystyle\int_{t_0}^{t_f} \partial_{s^\sharp_b} F(\bx(t), \bu(t), t; s_a, \bx^\sharp_a, s^\sharp_a, s_b, \bx^\sharp_b, s^\sharp_b)\, dt \\
                                                                                     \end{array}
                                                                                   \right]
\end{equation}

We now state the new transversality conditions in the form of the following theorem.  

\begin{theorem}[Main Result]\label{theorem:ross}
Assume the following:
\begin{enumerate}
\item Problem~$(\bP^{ode+})$ is defined by Problem~$(\bP)$ with $S$ given by $S^{ode+}$ (Cf.~\eqref{eq:probPbold} and \eqref{eq:Sode+=bydef}) and boundary conditions given by \eqref{eq:bc=ode+}.
\item The endpoint- and running cost functions for Problem~$(\bP^{ode+})$ are given by functions,
\begin{subequations}
\begin{align}
E(\bx_0, t_0, \bp_a; \bx_f, t_f, \bp_{b}) &:= E(\bx_0, t_0, s_a, \bx^\sharp_a, s^\sharp_a; \bx_f, t_f,  s_b, \bx^\sharp_b, s^\sharp_b)\\
F(\bx, \bu, t, \bp_a, \bp_{b}) &:=F(\bx, \bu, t; s_a, \bx^\sharp_a, s^\sharp_a, s_b, \bx^\sharp_b, s^\sharp_b)
\end{align}
\end{subequations}
that are continuously differentiable with respect to their arguments.
\item The vectors in the normal cones are given by,
\begin{subequations}
\begin{align}
\left(\nu_{t_0}, \nu_{t_f} \right) & \in N_{I_0}(t_0) \times N_{I_f}(t_f) \label{eq:nut0inN}\\
\left(\eta_{s_a}, \eta_{s_b}\right) & \in N_{Q_a}(s_a) \times  N_{Q_b}(s_b) \label{eq:eta_sasb}\\
\left(\etab_{x^\sharp_a}, \eta_{s^\sharp_a} \right) & \in N_{P_a}(\bx^\sharp_a, s^\sharp_a)\\
\left(\etab_{x^\sharp_b}, \eta_{s^\sharp_b} \right) & \in N_{P_b}(\bx^\sharp_b, s^\sharp_b)
\end{align}
\end{subequations}
\item The clock-coordination multipliers associated with enforcing the constraints $\phi_0(s_a, t_0) = 0$ and $\phi_f(s_b, t_f) = 0$ are given by $\nu^C_a$ and $\nu^C_b$  respectively.
\item The weak adjoint covectors (Cf.~Definition~\ref{def:weakAdj}) associated with enforcing the boundary differential equations, $\bx'_0(s_a) = \bg_0(\bx_0(s_a), s_a) $ and $\bx'_f(s_b) = \bg_f(\bx_f(s_b), s_b) $  are given by $\bpsi_0 \in \real{N_x}$ and $\bpsi_f \in \real{N_x}$ respectively.
\end{enumerate}

Then, the transversality conditions for Problem~$(\bP^{ode+})$ are given by,
\begin{subequations}\label{eq:ross-tvc-lam0+f}
\begin{align}
- \bpsi_0 & =  \blam(t_0) + \nu^0\,\partial_{\bx_0} E(\bx_0, t_0, \bp_a; \bx_f, t_f, \bp_b) \label{eq:nu0-def}\\
 \bpsi_f & = \blam(t_f) - \nu^0\, \partial_{\bx_f}E(\bx_0, t_0, \bp_a; \bx_f, t_f, \bp_b) \label{eq:nuf-def}\\
\bpsi_0^T \bg_0(\bx_0, s_a) &=
 \nu^0\, \partial_{s_a}E(\bx_0, t_0, s_a, \bx^\sharp_a, s^\sharp_a; \bx_f, t_f,  s_b, \bx^\sharp_b, s^\sharp_b)
 + \nu_a^C \partial_{s_a}\phi_0(s_a, t_0) + \eta_{s_a}+\sigma_{s_a} \label{eq:ross-tvc-lam0}\\
\bpsi_f^T \bg_f(\bx_f, s_b) &=
\nu^0\, \partial_{s_b}E(\bx_0, t_0, s_a, \bx^\sharp_a, s^\sharp_a; \bx_f, t_f,  s_b, \bx^\sharp_b, s^\sharp_b)
 + \nu_b^C \partial_{s_b}\phi_f(s_b, t_f) + \eta_{s_b} + \sigma_{s_b} \label{eq:ross-tvc-lamf}
\end{align}
\end{subequations}
where $\sigma_{s_a}$ and $\sigma_{s_b}$ are as defined in \eqref{eq:sigma=bydef-2}.
The Hamiltonian value conditions are given by,
\begin{subequations}\label{eq:ross-hvc}
\begin{align}
\mathcal{H}[@t_0] - \nu^0\, \partial_{t_0}E(\bx_0, t_0, s_a, \bx^\sharp_a, s^\sharp_a; \bx_f, t_f,  s_b, \bx^\sharp_b, s^\sharp_b) - \nu_{t_0} &= \nu_a^C \partial_{t_0}\phi_0(s_a, t_0) \label{eq:ross-hvc-0}\\
\mathcal{H}[@t_f] + \nu^0\,\partial_{t_f}E(\bx_0, t_0, s_a, \bx^\sharp_a, s^\sharp_a; \bx_f, t_f,  s_b, \bx^\sharp_b, s^\sharp_b) + \nu_{t_f} & = - \nu_b^C \partial_{t_f}\phi_f(s_b, t_f) \label{eq:ross-hvc-f}
\end{align}
\end{subequations}
The optimality conditions for the remainder of the constituent parameters namely, $\bx^\sharp_a$, $s^\sharp_a$,  $\bx^\sharp_b$ and $s^\sharp_b$ are given by,
\begin{subequations}\label{eq:ross-param}
\begin{align}
 \nu^0\, \partial_{\bx^\sharp_a}E(\bx_0, t_0, s_a, \bx^\sharp_a, s^\sharp_a; \bx_f, t_f,  s_b, \bx^\sharp_b, s^\sharp_b) + \bsigma_{x^\sharp_a} &=
\bpsi_0 - \etab_{x^\sharp_a} \label{eq:ross-param-a1}\\
 \nu^0\, \partial_{s^\sharp_a}E(\bx_0, t_0, s_a, \bx^\sharp_a, s^\sharp_a; \bx_f, t_f,  s_b, \bx^\sharp_b, s^\sharp_b) +\sigma_{s^\sharp_a} &=
- \left(\bpsi_0^T\bg_0(\bx^\sharp_a, s^\sharp_a) + \eta_{s^\sharp_a} \right) \label{eq:ross-param-a2}\\
\nu^0\, \partial_{\bx^\sharp_b}E(\bx_0, t_0, s_a, \bx^\sharp_a, s^\sharp_a; \bx_f, t_f,  s_b, \bx^\sharp_b, s^\sharp_b) + \bsigma_{x^\sharp_b} &=
\bpsi_f - \etab_{x^\sharp_b}\\
\nu^0\, \partial_{s^\sharp_b}E(\bx_0, t_0, s_a, \bx^\sharp_a, s^\sharp_a; \bx_f, t_f,  s_b, \bx^\sharp_b, s^\sharp_b) + \sigma_{s^\sharp_b} &=
-\left(
\bpsi_f^T\bg_f(\bx^\sharp_b, s^\sharp_b)  + \eta_{s^\sharp_b} \right)
\end{align}
\end{subequations}
where $\bsigma_{x^\sharp_a}$, $\sigma_{s^\sharp_a}$, $\bsigma_{x^\sharp_b}$ and $\sigma_{s^\sharp_b}$ are as defined in \eqref{eq:sigma=bydef-2}.

\end{theorem}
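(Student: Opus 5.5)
The plan is to reduce Problem~$(\bP^{ode+})$ to an instance of Problem~$(\bP)$ with parameter $\bp := (\bp_a, \bp_b)$ and apply Theorem~\ref{theorem-TVC-2}; the only real work is to compute a usable representation of a vector $\etab$ in $N_{S^{ode+}}$.

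\emph{Reformulation.} Introduce the boundary flow maps. Let $\bx_0(\cdot\,; \bx^\sharp_a, s^\sharp_a)$ be the solution of $\bx_0' = \bg_0(\bx_0, s)$ with $\bx_0(s^\sharp_a) = \bx^\sharp_a$, and write $\bx_0(s_a) = \Phi_0(s_a; \bx^\sharp_a, s^\sharp_a)$; define $\Phi_f$ for the final set in the same way. Then $S_0^{ode+}$ becomes an algebraic-plus-abstract-set description in the variables $(\bx_0, t_0, s_a, \bx^\sharp_a, s^\sharp_a)$:
\begin{itemize}
\item the equality $\bx_0 - \Phi_0(s_a;\bx^\sharp_a,s^\sharp_a) = \bzero$;
\item the equality $\phi_0(s_a,t_0)=0$;
\item the set memberships $t_0\in I_0$, $s_a \in Q_a$, $(\bx^\sharp_a,s^\sharp_a)\in P_a$.
\end{itemize}
The final set is treated the same way.

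\emph{Normal cone.} Under a constraint qualification (the equalities have full-rank Jacobian in $\bx_0$, and the sets enter as Cartesian factors), the normal cone of the intersection is the sum of the gradient terms of the equalities and the normal cones of the product sets, as in Remark~\ref{remark:eta-nu-connection}. Attach the following multipliers:
\begin{itemize}
\item $\bpsi_0$ to the state-matching equality;
\item $\nu^C_a$ to $\phi_0$;
\item $\nu_{t_0}$, $\eta_{s_a}$ and $(\etab_{x^\sharp_a},\eta_{s^\sharp_a})$ to the three sets.
\end{itemize}
The sign convention is chosen so that $\bpsi_0$ enters with the same orientation as $-\blam(t_0)$. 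This covector is exactly the weak adjoint of Definition~\ref{def:weakAdj}. Enforcing the flow through $\bx_0(s_a) - \bx^\sharp_a - \int_{s^\sharp_a}^{s_a}\bg_0\,d\xi = \bzero$ with a constant multiplier $\bpsi_0$ is the windowed integral with indicator weight. This identification is what lets us differentiate with respect to $s_a$ and $s^\sharp_a$ without solving a variational equation.

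\emph{Componentwise reading.} Substitute this $\etab$ into \eqref{eq:TVC-abstract-yes-p=} and read it off component by component.
\begin{itemize}
\item The $\bx_0$ and $\bx_f$ components give \eqref{eq:nu0-def} and \eqref{eq:nuf-def}.
\item The $t_0$ and $t_f$ components give \eqref{eq:ross-hvc}: only $E$, $\phi$ and $I$ depend on clock time.
\item The $s_a$ component uses $\partial_{s_a}\int_{s^\sharp_a}^{s_a}\bg_0\,d\xi = \bg_0(\bx_0(s_a),s_a) = \bg_0(\bx_0,s_a)$. This produces $\bpsi_0^T\bg_0(\bx_0,s_a)$ balanced against $\nu^0\partial_{s_a}E$, $\nu^C_a\partial_{s_a}\phi_0$, $\eta_{s_a}$ and, from the $-\bsigma$ slot, $\sigma_{s_a}$. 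That is \eqref{eq:ross-tvc-lam0}.
\item The $\bx^\sharp_a$ derivative of the integral representation is $-I$, giving $\bpsi_0$ in \eqref{eq:ross-param-a1}.
\item The $s^\sharp_a$ derivative is $+\bg_0(\bx^\sharp_a,s^\sharp_a)$, which yields \eqref{eq:ross-param-a2}.
\end{itemize}
Here $\bsigma_{ab}$ reduces to \eqref{eq:sigma=bydef-2} because $\bff$ does not depend on $\bp$.

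\emph{Main obstacle.} The delicate step is justifying the derivatives with respect to $\bx^\sharp_a$ and $s^\sharp_a$. In strong form these bring in the state-transition matrix of $\bg_0$, so the result is not literally $\pm$ identity. My approach is to treat $\bx_0(\cdot)$ on the window as an additional unknown function. The boundary constraint then splits into two pieces: the pointwise ODE, whose multiplier is the family $\bpsi_0\odot\mathbf{1}$, and the two endpoint equalities $\bx_0(s^\sharp_a)=\bx^\sharp_a$ and $\bx_0=\bx_0(s_a)$. The endpoint equalities are then linear in $\bx^\sharp_a$ and $\bx_0$.

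I would first verify that the constancy of $\bpsi_0$ over $[s^\sharp_a, s_a]$ is consistent, i.e.\ that the weak formulation does not require an adjoint ODE along the boundary flow. Failing that, I would fall back to computing with the transition matrix and absorbing it into the definition of $\bpsi_0$. Matching signs across the stated equations is then routine bookkeeping.
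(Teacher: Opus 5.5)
Your route is essentially the paper's. The integral form of the boundary ODE is Lemma~\ref{lemma:equiv}, the combined multiplier rule is Lemma~\ref{lemma:TVC}, and the limit derivatives are Lemma~\ref{lemma:neat}. Your rows for $\bx_0,\bx_f,t_0,t_f,s_a,s_b$ come out as you describe, because the boundary arc depends on neither $\bx_0$ nor $s_a$.

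\textbf{The gap.} It is precisely the step you flagged, and your repair does not close it.
Make $\by(\cdot):=\bx_0(\cdot)$ an unknown on $[s^\sharp_a,s_a]$. Put a multiplier function $\widehat{\blam}(\xi)$ on $\by'-\bg_0(\by,\xi)=\bzero$, and multipliers on $\bx_0=\by(s_a)$ and $\by(s^\sharp_a)=\bx^\sharp_a$. Stationarity in $\delta\by$ (integrate $\widehat{\blam}^T\delta\by'$ by parts) then gives two things:
\begin{itemize}
\item it forces $\widehat{\blam}'=-[\partial_{\bx}\bg_0]^T\widehat{\blam}$ on the window, with $\widehat{\blam}(s_a)=\bpsi_0$;
\item it makes the multiplier of $\by(s^\sharp_a)=\bx^\sharp_a$ equal to $\widehat{\blam}(s^\sharp_a)=M^T\bpsi_0$, where $M:=\partial_{\bx^\sharp_a}\bx_0(s_a)$ is the transition matrix of $\bg_0$ from $s^\sharp_a$ to $s_a$.
\end{itemize}
A constant $\widehat{\blam}\equiv\bpsi_0$ is consistent only if $[\partial_{\bx}\bg_0]^T\bpsi_0=\bzero$ along the arc. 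For the rotation field of Example~\ref{ex:2} this forces $\bpsi_0=\bzero$, so your consistency check fails. The fallback fails as well: $\bpsi_0$ is already pinned by \eqref{eq:nu0-def} and \eqref{eq:ross-tvc-lam0}, so $M^T$ cannot be absorbed into it.

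\textbf{What the argument actually gives.} Carried out correctly (equivalently, using $\partial_{\bx^\sharp_a}\bx_0(s_a)=M$ and $\partial_{s^\sharp_a}\bx_0(s_a)=-M\bg_0(\bx^\sharp_a,s^\sharp_a)$), your argument yields \eqref{eq:ross-param-a1} and \eqref{eq:ross-param-a2} with $\bpsi_0$ replaced by $M^T\bpsi_0$, and likewise on the final side. This agrees with the statement only when $M^T\bpsi_0=\bpsi_0$ (e.g.\ $\partial_{\bx}\bg_0\equiv\bzero$, as in Corollary~\ref{corr:1}), or when $P_a$ is a singleton so those rows are vacuous. In general the stated rows are false.

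Scalar check: take
\begin{itemize}
\item boundary data $g_0(x,s)=x$, $P_a=\Real\times\{0\}$, $Q_a=\{1\}$, $\phi_0\equiv 0$;
\item cost $E=x^\sharp_a$ with $\nu^0=1$ and $F=u^2/2$;
\item dynamics $\dot x=u\in[-1,1]$ with $t_0=0$, $t_f=1$, $x_f=0$.
\end{itemize}
Since $x_0=e\,x^\sharp_a$ with $x_0$ free, the reduced cost is $x_0/e+x_0^2/2$. The unique optimum has $\lambda\equiv x_0=-1/e$. Yet \eqref{eq:ross-param-a1}, with $\eta_{x^\sharp_a}=0$ because $N_{P_a}=\{0\}\times\Real$, gives $\psi_0=1$, and then \eqref{eq:nu0-def} gives $\lambda(t_0)=-1$.

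\textbf{Where the paper's proof stands.} It reaches \eqref{eq:ross-param} by exactly the frozen-trajectory partials you doubted:
\begin{itemize}
\item Lemma~\ref{lemma:neat} differentiates $\int_{s^\sharp_a}^{s_a}\bg_0(\bx_0(s),s)\,ds$ in $s^\sharp_a$ as if $\bx_0(\cdot)$ did not move;
\item the $\bx^\sharp_a$ row uses $-\widetilde{\bpsi}_0$ rather than $-M^T\widetilde{\bpsi}_0$;
\item the identity \eqref{eq:interp-1} holds only for a fixed trajectory.
\end{itemize}
So no sign bookkeeping will close this step for the statement as written. What is provable is the version with $M^T\bpsi_0$ in the $\bx^\sharp$ and $s^\sharp$ rows.
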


We defer a proof of Theorem~\ref{theorem:ross} to Section~\ref{sec:proof} in favor of discussing the result first and its many corollaries.

\section{Discussion of the Main Result and Development of Some Corollaries}
Theorem~\ref{theorem:ross} contains a number of new results.  Before discussing these results, we first note the following points about the theorem itself:
\begin{enumerate}
\item The transversality conditions are coupled to the optimality conditions associated with the parameters $\bp_{a}$ and $\bp_b$ (defined in \eqref{eq:pa+pb-constr}). This coupling was ``predicted'' in Remark~\ref{remark:TVC=+p}.  In other words, the standard result of Theorem~\ref{theorem-TVC-1} was not sufficiently general to support Theorem~\ref{theorem:ross}.
\item Because the weak adjoint covectors are given explicitly by Eqs.~(\ref{eq:nu0-def}) and (\ref{eq:nuf-def}), they can be eliminated in the remainder of Theorem~\ref{theorem:ross} by substituting these expressions for $\bpsi_0$ and $\bpsi_f$ respectively. However, they can also be used on their own as part of an independent process for verifying and validating the extremality of a computed solution.  See \cite{dixon-diss-2025} for examples.
\item Equations~(\ref{eq:ross-tvc-lam0}) and (\ref{eq:ross-tvc-lamf}) are two new scalar equations that help determine the initial and final transversality conditions associated with the costates.  The reason these equations are not $N_x$-dimensional each is because the initial and final states are each required to satisfy $N_x$-dimensional differential equations.  Hence, once an ``initial point,'' $(\bx^\sharp, s^\sharp)$, for an $N_x$-dimensional differential equation (Cf.~\eqref{eq:ivp-a}) is determined, the only free parameter is the local time-like variable $s$.  Thus, \eqref{eq:ross-tvc-lam0} and \eqref{eq:ross-tvc-lamf} are two scalar transversality equations providing the ``missing'' or ``natural'' boundary conditions\cite{brysonHo,longuski} for two scalars $s_a$ and $s_b$.
\item The statement $\nu_{t_0} \in N_{I_0}(t_0)$ in \eqref{eq:nut0inN} (and correspondingly $\nu_{t_f} \in N_{I_f}(t_f)$) reduces to the familiar complementarity conditions:
\begin{equation}\label{eq:normal-cone-1D}
\nu_{t_0} \in N_{I_0}(t_0) \Leftrightarrow \nu_{t_0} \left\{
                                                 \begin{array}{ll}
                                                   \le 0, & \hbox{\text{if} }\  t_0 = t_0^L \\
                                                   = 0, & \hbox{if }\  t_0^L < t_0 < t_0^U \\
                                                   \ge 0, & \hbox{if }\  t_0 = t_0^U \\
                                                   \text{any value}, & \hbox{if }\  t^L_0 = t_0^U
                                                 \end{array}
                                               \right.
\end{equation}
Equation~(\ref{eq:normal-cone-1D}) follows from \eqref{eq:eta-meaning}. Equation~(\ref{eq:normal-cone-1D}) can also be ``derived'' by observation of Fig.~\ref{fig:boxNC}.
\end{enumerate}

The transversality conditions given by Theorem~\ref{theorem:ross} incorporate many number of practical considerations such as clock-time synchronization.  To better understand this theorem, it is useful to break it down to simpler situations so that different aspects of the theorem can be explained in some detail. This is done in terms of several corollaries to follow. We begin with a deceptively simple case, namely, when the differential equations (that govern the boundary conditions) vanish.

\subsection{Case 1: Vanishing Boundary Differential Equations}
Consider the case when the boundary differential equations vanish, namely,
\begin{equation}\label{eq:ode=0}
\bx'_0(s_a) = \bg_0(\bx_0(s_a), s_a) = \bzero \quad\text{and} \quad \bx'_f(s_b) = \bg_f(\bx_f(s_b), s_b) = \bzero
\end{equation}
At first sight, \eqref{eq:ode=0} suggests that Theorem~\ref{theorem:ross} should reduce to the case corresponding to fixed boundary conditions such as the situation depicted in panel~$(a)$ in Fig.~\ref{fig:Examples4S}.  In fact, this is not necessarily true; rather, \eqref{eq:ode=0} implies the case depicted in panel~$(c)$ (of Fig.~\ref{fig:Examples4S}). To prove this point, we analyze the implication of \eqref{eq:ode=0} on only the initial-time conditions.  Results for the final-time conditions follow similarly.

The condition $\bx'_0(s_a) = \bzero$ implies $\bx_0(s_a)= \bx^\sharp_a $ for all $ s_a \in Q_a$.  See \eqref{eq:S0=def}.  Hence, we get,
\begin{equation}\label{eq:x0=xSharp}
\bx_0 = \bx_0(s_a) = \bx^\sharp_a \quad \big(= \bx_0(s^\sharp_a)\big)
\end{equation}
Note, however, that $\bx^\sharp_a$ is not necessarily fixed; rather, we have $(\bx^\sharp_a, s^\sharp_a) \in P_a$.  Hence, \eqref{eq:x0=xSharp} implies
\begin{equation}\label{eq:ode=0-firstResult}
(\bx_0, s^\sharp_a) \in P_a
\end{equation}
Thus \eqref{eq:ode=0-firstResult} is suggestive of panel~$(c)$ in Fig.~\ref{fig:Examples4S} rather than panel~$(a)$, unless, of course if $P_a$ is a singleton.

For brevity of discussion, we now assume $Q_a = \Real$.  Thus, the initial set corresponding to the case of a vanishing boundary ODE is given by,
\begin{equation}\label{eq:S04g=0}
S_0 = \Big\{(\bx_0, t_0) \in \real{N_x} \times \Real, s_a \in \Real, (\bx^\sharp_a, s^\sharp_a) \in P_a:\ \bx_0 = \bx^\sharp_a, \quad
\phi_0(s_a,t_0)=0,  \quad t_0 \in I_0, \quad \Big\}
\end{equation}
The meaning of a vanishing boundary ODE can now be interpreted in terms of the variables of \eqref{eq:S04g=0} as follows:
\begin{enumerate}
\item $\bx_0$ is the same as the optimization parameter $\bx^\sharp_a$.  Because $\bx^\sharp_a$ is not necessarily a singleton, neither is $\bx_0$.
\item $t_0$ is not necessarily a singleton.
\item The variable $s^\sharp_a$ is effectively a dummy variable that does not directly contribute to constraining $\bx_0$ or $t_0$.
\end{enumerate}
For simplicity of discussion, we now assume the running cost $F$ to be independent of the parameterd $\bp_{a}$ and $\bp_b$ (see \eqref{eq:pa+pb-constr}).  This assumption merely simplifies the discussions to follow by setting $\bsigma_{ab} = \bzero$ (see \eqref{eq:sigma=bydef-2}).   Thus, for example, \eqref{eq:ross-param-a2} reduces to,
\begin{equation}\label{eq:KKT4saSharp}
 \nu^0\, \partial_{s^\sharp_a}E(\bx_0, t_0, s_a, \bx^\sharp_a, s^\sharp_a; \bx_f, t_f,  s_b, \bx^\sharp_b, s^\sharp_b) =
-   \eta_{s^\sharp_a}
\end{equation}
Equation~(\ref{eq:KKT4saSharp}) is just the generalized KKT condition for optimality of $s^\sharp_a$. See Theorem~\ref{theorem-TVC-2} and  Remark~\ref{rem:KKT} for context.  Per the discussion point earlier, \eqref{eq:KKT4saSharp} was ``expected,'' because $s^\sharp_a$ does not contribute to constraining $\bx_0$ or $t_0$.

Next, consider \eqref{eq:ross-tvc-lam0}.  Because $s_a \in \Real$ per \eqref{eq:S04g=0}, $\eta_{s_a} = 0$.  Hence, \eqref{eq:ross-tvc-lam0} simplifies to,
\begin{equation}\label{eq:sa4g=0}
 \nu^0\, \partial_{s_a}E(\bx_0, t_0, s_a, \bx^\sharp_a, s^\sharp_a; \bx_f, t_f,  s_b, \bx^\sharp_b, s^\sharp_b)
 + \nu_a^C \partial_{s_a}\phi_0(s_a, t_0) = 0
\end{equation}
Define,
\begin{equation}\label{eq:Ebar4g=0}
\overline{E}(\nu_a^C; \bx_0, t_0, s_a, \bx^\sharp_a, s^\sharp_a; \bx_f, t_f,  s_b, \bx^\sharp_b, s^\sharp_b) :=     \nu^0\, E(\bx_0, t_0, s_a, \bx^\sharp_a, s^\sharp_a; \bx_f, t_f,  s_b, \bx^\sharp_b, s^\sharp_b)
 + \nu_a^C \phi_0(s_a, t_0)
\end{equation}
Then \eqref{eq:sa4g=0} can be rewritten as,
\begin{equation}\label{eq:sa4g=0-Lag}
\partial_{s_a}\overline{E}(\nu_a^C; \bx_0, t_0, s_a, \bx^\sharp_a, s^\sharp_a; \bx_f, t_f,  s_b, \bx^\sharp_b, s^\sharp_b) = 0
\end{equation}
In other words, \eqref{eq:sa4g=0} is a first-order necessary condition for minimizing $E$ with respect to $s_a$ under the constraint $\phi_0(s_a, t_0) = 0$ with \eqref{eq:Ebar4g=0} as the Lagrangian for this problem.  Per Remark~\ref{rem:KKT}, \eqref{eq:sa4g=0-Lag} and hence \eqref{eq:sa4g=0} is an expected result since $s_a$ is an optimization parameter.  Similarly, \eqref{eq:ross-hvc-0} may be written as,
\begin{equation}\label{eq:hvc-0-4sync}
\mathcal{H}[@t_0] =  \partial_{t_0}\overline{E}(\nu_a^C; \bx_0, t_0, s_a, \bx^\sharp_a, s^\sharp_a; \bx_f, t_f,  s_b, \bx^\sharp_b, s^\sharp_b) + \nu_{t_0}
\end{equation}
which is the standard Hamiltonian value condition\cite{ross-book,vinter} at $t_0$.  To generate the initial transversality condition, note that \eqref{eq:S04g=0} has a constraint on the initial state given by $\bx_0 - \bx^\sharp_a = \bzero$. Let $\widetilde{\bpsi}_0 \in \real{N_x}$ be a multiplier associated with this constraint.  Then, the endpoint Lagrangian (ignoring the constraint $\phi_0(s_a, t_0) = 0$) is given by,
\begin{equation}\label{eq:Ebar4g=0-2}
\widetilde{E}(\widetilde{\bpsi}_0; \bx_0, t_0, s_a, \bx^\sharp_a, s^\sharp_a; \bx_f, t_f,  s_b, \bx^\sharp_b, s^\sharp_b) :=     \nu^0\, E(\bx_0, t_0, s_a, \bx^\sharp_a, s^\sharp_a; \bx_f, t_f,  s_b, \bx^\sharp_b, s^\sharp_b)
 + \widetilde{\bpsi}_0 (\bx_0 - \bx^\sharp_a)
\end{equation}
Applying Theorem~\ref{theorem:TVC-alg} to \eqref{eq:Ebar4g=0-2}, the initial transversality condition is computed as,
\begin{equation}\label{eq:TVC4g=0}
- \blam(t_0) =  \nu^0\, \partial_{\bx_0} E(\bx_0, t_0, \bx_f, t_f; s_a, \bx^\sharp_a, s^\sharp_a, s_b, \bx^\sharp_b, s^\sharp_b)
 + \widetilde{\bpsi}_0
\end{equation}
Comparing \eqref{eq:TVC4g=0} with \eqref{eq:nu0-def}, it follows that these two equations are identical under the interpretation $\widetilde{\bpsi}_0 = \bpsi_0$.

Finally, \eqref{eq:ross-param-a1} reduces to the generalized KKT condition for $ \bx^\sharp_a$ using \eqref{eq:Ebar4g=0-2} as a Lagrangian (with  $(\bx^\sharp_a, s^\sharp_a) \in P_a$ being accounted for via $\etab_{x^\sharp_a}$). See also Lemma~\ref{lemma:TVC} in Sec.~\ref{sec:proof} for additional context.

The preceding discussions can be summarized in terms of the following corollary to Theorem~\ref{theorem:ross}:
\begin{corollary}\label{corr:1}
Suppose the boundary differential equations in the definition of Problem~$(\bP^{ode+})$ vanish. Then the  constraints on the boundary points reduce to the ``static'' specifications on the boundary parameters $\bp_{a}$ and $\bp_b$ defined in \eqref{eq:pa+pb-constr}.  In this case, the transversality conditions given by Theorem~\ref{theorem:ross} simplify to equivalent conditions that can be obtained independently by an application of Theorem~\ref{theorem-TVC-2}.
\end{corollary}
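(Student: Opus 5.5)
The plan is to compare two sets of necessary conditions term by term. The first set comes from specializing Theorem~\ref{theorem:ross} to \eqref{eq:ode=0}. The second comes from applying Theorem~\ref{theorem-TVC-2} directly to the reduced, static boundary set. The reduced initial set is $S_0$ in \eqref{eq:S04g=0}, and $S_f$ is its final-time analogue. In these sets the only constraints are the algebraic equalities $\bx_0-\bx^\sharp_a=\bzero$ and $\phi_0(s_a,t_0)=0$ (and the corresponding final-time ones), together with the set constraints $t_0\in I_0$, $s_a\in Q_a$ and $(\bx^\sharp_a,s^\sharp_a)\in P_a$. The final-time case is symmetric, so I will treat only the initial-time conditions and write the result for $t_f$ by analogy.

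First, I make the reduction precise. If $\bg_0\equiv\bzero$, the solution of $\bx_0'(s)=\bzero$ with $\bx_0(s^\sharp_a)=\bx^\sharp_a$ is constant. Hence $S_0^{ode+}$ equals the static set, with constraint $\bx_0=\bx^\sharp_a$ and $s_a$ free inside $Q_a$. This is \eqref{eq:x0=xSharp}.

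Second, I compute the normal cone of the static set. The set is a product of an algebraic equality set with the sets $I_0$, $Q_a$ and $P_a$. Under a constraint qualification its normal cone at a point has the form
\[
[\partial\be]^T\bnu + \big(\nu_{t_0},\eta_{s_a},\etab_{x^\sharp_a},\eta_{s^\sharp_a}\big),
\]
with the components placed in the appropriate slots. The required qualification is that the equalities $\bx_0-\bx^\sharp_a=\bzero$ and $\phi_0=0$ have full-rank Jacobians; this holds because the first has Jacobian identity in $\bx_0$ and $\phi_0$ has unique solvability. The algebraic part of this formula is \eqref{eq:eta-meaning}, and the product structure is the usual normal cone of a Cartesian product. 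I take the multiplier of $\bx_0-\bx^\sharp_a$ to be $\widetilde\bpsi_0$ and the multiplier of $\phi_0$ to be $\nu^C_a$. Inserting this cone into \eqref{eq:TVC-abstract-yes-p=} and reading off each component gives the following:
\begin{itemize}
\item the $\bx_0$-component is \eqref{eq:TVC4g=0};
\item the $t_0$-component is \eqref{eq:hvc-0-4sync};
\item the $s_a$-component is \eqref{eq:sa4g=0}, with $\eta_{s_a}$ and $\sigma_{s_a}$ restored;
\item the $\bx^\sharp_a$-component is $\nu^0\partial_{\bx^\sharp_a}E+\bsigma_{x^\sharp_a}=\widetilde\bpsi_0-\etab_{x^\sharp_a}$, because the sign of $-\bsigma$ matches that of $\partial E$ and the constraint contributes $-\widetilde\bpsi_0$;
\item the $s^\sharp_a$-component is the KKT condition \eqref{eq:KKT4saSharp}.
\end{itemize}

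Third, I set $\bg_0=\bzero$ in \eqref{eq:nu0-def}, \eqref{eq:ross-tvc-lam0}, \eqref{eq:ross-hvc-0}, \eqref{eq:ross-param-a1} and \eqref{eq:ross-param-a2}. Then $\bpsi_0^T\bg_0$ drops out of both \eqref{eq:ross-tvc-lam0} and \eqref{eq:ross-param-a2}, and the resulting equations coincide with the list above under the identification $\bpsi_0=\widetilde\bpsi_0$. Repeating the argument at $t_f$ with $\bpsi_f=\widetilde\bpsi_f$ completes the equivalence.

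The main obstacle is the second step. It requires justifying that the limiting normal cone of the static product set really splits into the Lagrange-multiplier part plus the product of the individual normal cones, so that $\etab$ in \eqref{eq:TVC-abstract-yes-p=} has the claimed form. I would first verify the full-rank condition to invoke \eqref{eq:eta-meaning} for the equality part, and then use the product rule for normal cones. A minor point of care is the sign bookkeeping for the $\bx^\sharp_a$ and $s^\sharp_a$ components relative to the $-\bsigma$ convention.
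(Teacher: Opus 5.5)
Your proposal is correct and follows essentially the paper's own route: reduce $S_0^{ode+}$ to the static set \eqref{eq:S04g=0}, attach $\widetilde\bpsi_0$ to $\bx_0-\bx^\sharp_a=\bzero$ and $\nu_a^C$ to $\phi_0=0$, read off the $\bx_0$, $t_0$, $s_a$, $\bx^\sharp_a$, $s^\sharp_a$ components, and match them with Theorem~\ref{theorem:ross} at $\bg_0=\bzero$ via $\widetilde\bpsi_0=\bpsi_0$; you are slightly more general than the paper in keeping $\eta_{s_a}$ and $\bsigma_{ab}$ rather than assuming $Q_a=\Real$ and $F$ independent of $\bp_a,\bp_b$. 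One correction to your constraint-qualification remark: the static set is an intersection of the equality set with $I_0\times Q_a\times P_a$, not a product, so full rank of the equality Jacobians is not the relevant condition (and unique solvability of $\phi_0=0$ neither implies a nonzero gradient nor covers the case $\phi_0\equiv 0$, where that constraint should simply be dropped); what suffices is that $\nu_a^C\big(\partial_{t_0}\phi_0,\partial_{s_a}\phi_0\big)+(\nu_{t_0},\eta_{s_a})=\bzero$ with $\nu_{t_0}\in N_{I_0}(t_0)$ and $\eta_{s_a}\in N_{Q_a}(s_a)$ forces $\nu_a^C=0$, which holds, e.g., if $\partial_{s_a}\phi_0\neq 0$ and $Q_a=\Real$ as the paper assumes.
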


Stated differently, Corollary~\ref{corr:1} simply states that if the boundary conditions do not contain differential equations, then Theorem~\ref{theorem:ross} generates results that can be gotten by a direct application of Theorem~\ref{theorem-TVC-2}.  In this context, Corollary~\ref{corr:1} is essentially an independent validation of Theorem~\ref{theorem:ross}.

\subsection{Case 2: Uncoordinated Boundary Clock Times}
The uncoordinated boundary clock times correspond to the case of $\phi_0(s_a, t_0) \equiv 0 \equiv \phi_f(s_b, t_f)$.  See Example~\ref{ex:1}, Fig.~\ref{fig:unitCircle} and Definition~\ref{def:phantom} for context.  As in Case~$1$ of the previous subsection, we analyze the problem for initial-time conditions only.  Results for the final-time conditions follow similarly.

With $\phi_0 \equiv 0$, \eqref{eq:ross-hvc-0} reduces to the established Hamiltonian value condition\cite{ross-book} while \eqref{eq:ross-tvc-lam0} simplifies to,
\begin{equation}\label{eq:cor-2-start}
\bpsi_0^T \bg_0(\bx_0, s_a) =
 \nu^0\, \partial_{s_a}E(\bx_0, t_0, s_a, \bx^\sharp_a, s^\sharp_a; \bx_f, t_f,  s_b, \bx^\sharp_b, s^\sharp_b)
 + \eta_{s_a}+\sigma_{s_a}
\end{equation}
Suppose we assume the local time-like variable $s_a$  is not included in the cost function; i.e., neither  $E$ nor $F$ are functions of $s_a$. Then the first and last term on the right-hand-side of \eqref{eq:cor-2-start} vanish. In this case, we get the condition,
\begin{equation}\label{eq:cor-2-start-2}
\bpsi_0^T \bg_0(\bx_0, s_a) = \eta_{s_a}
\end{equation}
If we now assume $Q_a = \Real$, then $\eta_{s_a} = 0$.  As a result, \eqref{eq:cor-2-start-2} generates the interesting condition,
\begin{equation}\label{eq:cor-2-wow-1}
\bpsi_0^T \bg_0(\bx_0, s_a) = 0
\end{equation}
That is, under the assumptions stated earlier, we have a transversality condition that requires the weak adjoint covector $\bpsi_0$ to be orthogonal (see Remark~\ref{rem:ortho}) to the vector field $\bg_0(\cdot)$ at a candidate optimal point $(\bx_0, s_a)$.  We capture this result in the form of the following corollary:
\begin{corollary}\label{corr:2}
Suppose the boundary clock times in the definition of Problem~$(\bP^{ode+})$ are uncoordinated and unconstrained. That is, let $\phi_0(s_a, t_0) \equiv 0 \equiv \phi_f(s_b, t_f)$ so that the sets $S_0^{ode+}$ and $S_f^{ode+}$ simplify to,
\begin{multline*}
S_0^{ode+} := \Big\{(\bx_0, t_0, s_a, \bx^\sharp_a, s^\sharp_a):\ \bx_0 = \bx_0(s_a), \quad \bx_0'(s_a) = \bg_0(\bx_0(s_a), s_a), \quad \bx_0(s^\sharp_a) = \bx^\sharp_a, \\
(\bx^\sharp_a, s^\sharp_a) \in P_a, \quad t_0 \in I_0, \quad s_a \in \Real \Big\}
\end{multline*}
\begin{multline*} 
S_f^{ode+} := \Big\{(\bx_f, t_f, s_b, \bx^\sharp_b, s^\sharp_b):\  \bx_f = \bx_f(s_b), \quad \bx_f'(s_b) = \bg_f(\bx_f(s_b), s_b), \quad \bx_f(s^\sharp_b) = \bx^\sharp_b, \\
(\bx^\sharp_b, s^\sharp_b) \in P_b, \quad t_f \in I_f, \quad s_b \in \Real \Big\}
\end{multline*}
Suppose further that the endpoint cost and running cost functions are independent of the boundary clock times $s_a$ and $s_b$.  Then the transversality and Hamiltonian value conditions given by \eqref{eq:ross-tvc-lam0+f} and \eqref{eq:ross-hvc} in Theorem~\ref{theorem:ross} simplify to,
\begin{subequations}\label{eq:corr-2-0+f}
\begin{align}
\bpsi_0^T \bg_0(\bx_0, s_a) &= 0  & \mathcal{H}[@t_0] & = \nu^0\, \partial_{t_0}E(\bx_0, t_0, s_a, \bx^\sharp_a, s^\sharp_a; \bx_f, t_f,  s_b, \bx^\sharp_b, s^\sharp_b) + \nu_{t_0}\label{eq:corr-2-0}\\
\bpsi_f^T \bg_f(\bx_f, s_b) &= 0  & -\mathcal{H}[@t_f] &=  \nu^0\,\partial_{t_f}E(\bx_0, t_0, s_a, \bx^\sharp_a, s^\sharp_a; \bx_f, t_f,  s_b, \bx^\sharp_b, s^\sharp_b) + \nu_{t_f}
\end{align}
\end{subequations}
\end{corollary}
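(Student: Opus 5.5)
This corollary is a direct specialization of Theorem~\ref{theorem:ross}, so the plan is to substitute the stated hypotheses into \eqref{eq:ross-tvc-lam0+f} and \eqref{eq:ross-hvc} and simplify term by term. We carry this out for the initial-time conditions; the final-time conditions follow by the identical argument with the replacements $(a, 0) \to (b, f)$.

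\emph{Step 1: the clock-coordination terms.} Under $\phi_0 \equiv 0$, both partial derivatives $\partial_{s_a}\phi_0$ and $\partial_{t_0}\phi_0$ vanish identically. Hence every term $\nu_a^C\,\partial\phi_0$ drops out of \eqref{eq:ross-tvc-lam0} and \eqref{eq:ross-hvc-0}, whatever the value of the multiplier $\nu_a^C$. The multiplier is then left undetermined, but it no longer appears in any condition, so this causes no difficulty.

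\emph{Step 2: the cost terms in $s_a$.} Independence of $E$ and $F$ from $s_a$ gives $\partial_{s_a}E = 0$. By \eqref{eq:sigma=bydef-2}, it also gives $\sigma_{s_a} = \nu^0\int_{t_0}^{t_f}\partial_{s_a}F\,dt = 0$.

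\emph{Step 3: the constraint set for $s_a$.} With $s_a \in \Real$ (i.e., $Q_a = \Real$), the tangent cone $T_{Q_a}(s_a)$ is all of $\Real$ at every point. Its polar cone is therefore $\{0\}$, and by Definition~\ref{definition:NC} the limiting normal cone $N_{Q_a}(s_a)$ is $\{0\}$ as well. Equivalently, this is the case $t^L = -\infty$, $t^U = +\infty$ of \eqref{eq:normal-cone-1D}. It follows that $\eta_{s_a} = 0$ in \eqref{eq:eta_sasb}.

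Substituting Steps 1--3 into \eqref{eq:ross-tvc-lam0} leaves $\bpsi_0^T\bg_0(\bx_0, s_a) = 0$. Substituting Step 1 into \eqref{eq:ross-hvc-0} leaves $\mathcal{H}[@t_0] = \nu^0\,\partial_{t_0}E + \nu_{t_0}$. These are exactly the two relations in \eqref{eq:corr-2-0}.

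At final time, \eqref{eq:ross-hvc-f} reduces to $\mathcal{H}[@t_f] + \nu^0\,\partial_{t_f}E + \nu_{t_f} = 0$, which rearranges to the stated form $-\mathcal{H}[@t_f] = \nu^0\,\partial_{t_f}E + \nu_{t_f}$.

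The only step that needs genuine care is Step 3, the normal-cone computation. Everything else is substitution, and the remaining conditions \eqref{eq:nu0-def}, \eqref{eq:nuf-def} and \eqref{eq:ross-param} carry over unchanged and are not part of the claim.
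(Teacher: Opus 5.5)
Your proposal is correct and follows the paper's own derivation in the Case~2 discussion preceding the corollary: set $\phi_0\equiv 0$, drop $\partial_{s_a}E$ and $\sigma_{s_a}$ because the costs do not depend on $s_a$, use $Q_a=\Real$ to get $\eta_{s_a}=0$, and repeat the argument at the final time. Your explicit normal-cone computation for $Q_a=\Real$ and your rearrangement of \eqref{eq:ross-hvc-f} simply spell out steps the paper states in passing.
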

%
\begin{remark}\label{remark:cor-2-isnot}
  The orthogonality conditions $\bpsi_0^T \bg_0(\bx_0, s_a) = 0$ and  $\bpsi_f^T \bg_f(\bx_f, s_b) = 0$ are not the same as those covered in typical textbooks\cite{longuski,brysonHo,ross-book,vinter}.  This is because weak adjoint covectors $\bpsi_0$ and $\bpsi_f$  are not $\blam_0$ and $\blam_f$ respectively; see Eqs.~(\ref{eq:nu0-def}) and (\ref{eq:nuf-def}).  Furthermore, $\bg_0(\bx_0, s_a)$ and $\bg_f(\bx_f, s_b)$ are not algebraic equations that define the boundary conditions; rather, they are vector fields of the differential equations that define the endpoint conditions.
\end{remark}
It is instructive to demonstrate Corollary~\ref{corr:2} and the comments of Remark~\ref{remark:cor-2-isnot} via a simple example.
%
%
\begin{example}\label{ex:2}
Suppose $\bx = (x, y) \in \real{2}$ and we require $\bx_0 = \bx(t_0)$ in an optimal control problem to be selected from the set,
\begin{multline}\label{eq:ex:2-S0}
S_0^{ode+} := \Big\{(\bx_0, t_0) \in \real{2} \times \Real, s_a \in \Real, (\bx^\sharp_a, s^\sharp_a) \in P_a \subset \real{2}\times \Real:\ \bx_0 = (x_0(s_a), y_0(s_a), \quad x_0'(s_a) =y_0(s_a),\\
 y_0'(s_a) =-x_0(s_a), \quad \bx_0(s^\sharp_a) = \bx^\sharp_a \Big\}
\end{multline}
Suppose further that the endpoint cost function $E$ of this problem is independent of $\bx_0$.  Then from \eqref{eq:nu0-def} we have,
\begin{equation}\label{eq:ex-2-nu0}
- \bpsi_0 = \blam(t_0) := [\lambda_x(t_0), \lambda_y(t_0)]^T
\end{equation}
According to \eqref{eq:corr-2-0}, the initial transversality conditions (using \eqref{eq:ex-2-nu0}) are given by,
\begin{equation}\label{eq:ex-2-cor2}
\lambda_x(t_0) y_0(s_a) - \lambda_y(t_0)x_0(s_a) = 0
\end{equation}
and
$$\mathcal{H}[@t_0]  = \nu^0\, \partial_{t_0}E(\bx_0, t_0, s_a, \bx^\sharp_a, s^\sharp_a; \bx_f, t_f,  s_b, \bx^\sharp_b, s^\sharp_b) $$
Obviously \eqref{eq:ex-2-cor2} is a new condition based on Corollary~\ref{corr:2}. The specific vector field in this example is given by,
$$\bg(x, y) = \left[
                \begin{array}{c}
                  y \\
                  -x \\
                \end{array}
              \right]
  $$
Equation~(\ref{eq:ex-2-cor2}) states that $\blam(t_0)$ is orthogonal to $\bg(x,y)$ at the point $(x,y)$. See Remark~\ref{rem:ortho} for context with respect to orthogonality of vectors in different spaces. A geometric description of this result is illustrated in Fig.~\ref{fig:ex-2:VecFieldTVC}.
%
\begin{figure}[h!]
      \centering
      {\parbox{0.9\columnwidth}{
      \centering
      {\includegraphics[width = 0.4\columnwidth]{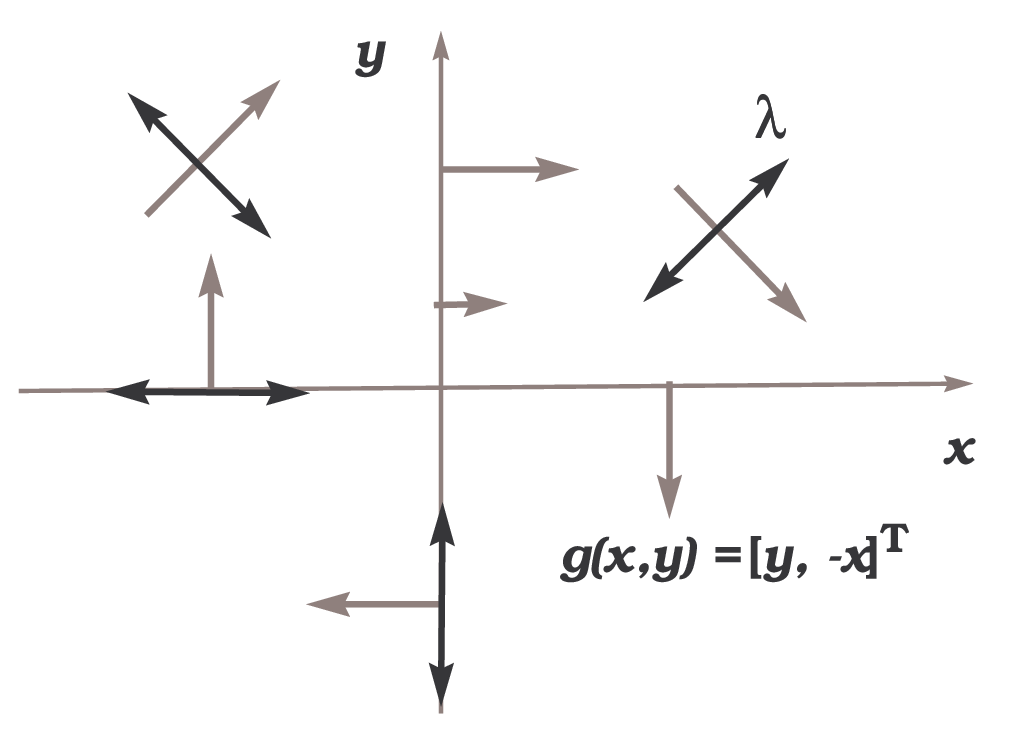}}
      \caption{{An illustration of the orthogonality condition given by \eqref{eq:ex-2-cor2}.}}\label{fig:ex-2:VecFieldTVC}
      }
      }
\end{figure}
%
The single-headed arrows in Fig.~\ref{fig:ex-2:VecFieldTVC} denote the vector field $\bg(x,y)$ and the double-headed arrows denote the initial costate (as a proxy vector\cite{ross-book} to the covector).
\end{example}
Example~\ref{ex:2} illustrates and amplifies the comments in Remark~\ref{remark:cor-2-isnot}.  In the next example, we establish a connection between Corollary~\ref{corr:2} and the standard transversality conditions. 
%
\begin{example}\label{ex:3}
Reconsider Example~\ref{ex:2}.  In this instance, we use the integrability of the differential equations in \eqref{eq:ex:2-S0}:
\begin{equation}\label{eq:ex-2:ode=circ}
x_0'(s_a) =y_0(s_a), \ y_0'(s_a) =-x_0(s_a) \Rightarrow x_0^2(s_a) + y_0^2(s_a) = \text{constant}
\end{equation}
Thus, the differential equations in \eqref{eq:ex:2-S0} are nothing more than differential parameterizations of a ``circle manifold'' such as the one shown in Fig.~\ref{fig:unitCircle}.  Applying the transversality condition for the algebraic transformation of $S_0^{ode+}$; i.e., using \eqref{eq:ex-2:ode=circ} and Theorem~\ref{theorem:TVC-alg}, we get,
\begin{equation}\label{eq:ex-2:tvc-alg}
\lambda_x(t_0) = -\nu_a\, x_0(s_a), \ \lambda_y(t_0) = -\nu_a\, y_0(s_a)
\end{equation}
where $\nu_a$ is an endpoint multiplier associated with the algebraic equation that describes the circle constraint in \eqref{eq:ex-2:ode=circ}.  Eliminating $\nu_a$ from \eqref{eq:ex-2:tvc-alg} yields \eqref{eq:ex-2-cor2}.  Thus the transversality conditions generated by Theorem~\ref{theorem:TVC-alg} and Corollary~\ref{corr:2} are equivalent as indeed they should be.  But note some nuanced differences:
\begin{enumerate}
\item The multipliers in the differential and algebraic parameterizations of the boundary conditions are not the same (i.e., $\nu_a$ is not the same as $\bpsi_0$). In fact, they are not even the same dimensionally. %
\item Equation~(\ref{eq:ex-2:tvc-alg}) is also a statement of orthogonality; however, it is defined in terms of the manifold and not the vector field.  This point is illustrated in Fig.~\ref{fig:ex-2:circleTVC}.
%
\begin{figure}[h!]
      \centering
      {\parbox{0.9\columnwidth}{
      \centering
      {\includegraphics[width = 0.4\columnwidth]{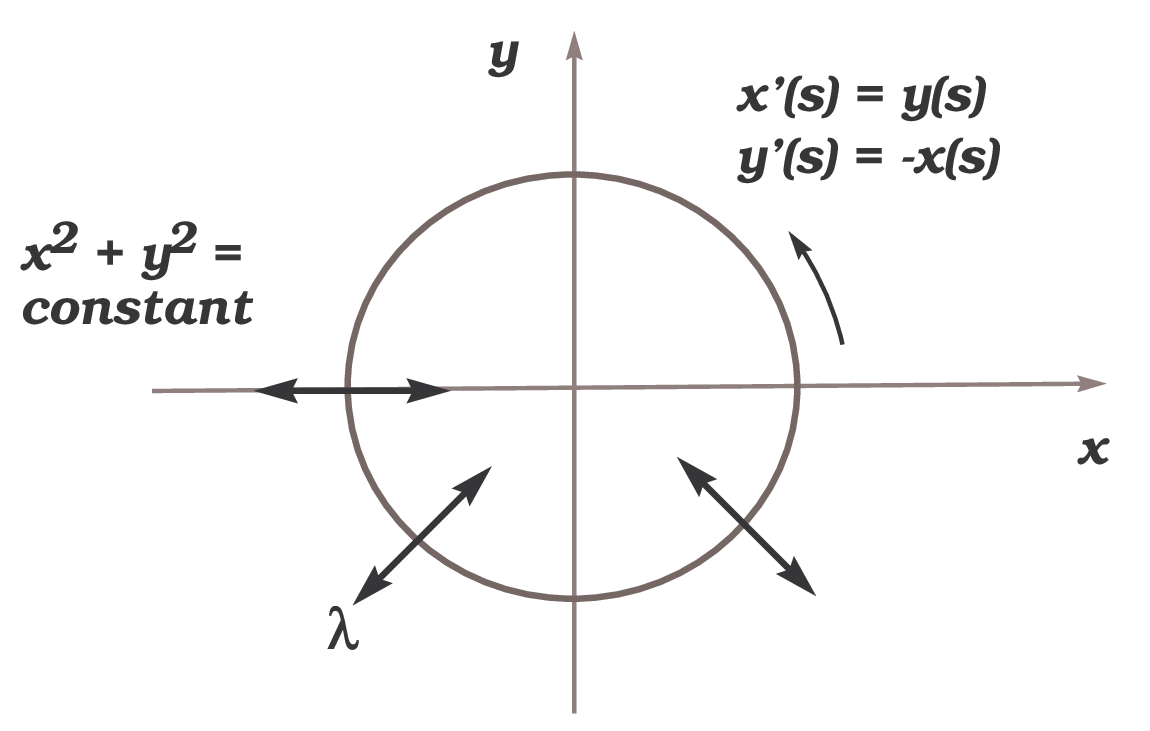}}
      \caption{{Illustrating various connections in the transversality conditions associated with Example~\ref{ex:3}.}}\label{fig:ex-2:circleTVC}
      }
      }
\end{figure}
%
Also shown in Fig.~\ref{fig:ex-2:circleTVC} is the fact that the differential equations that define $S_0$ given by \eqref{eq:ex:2-S0} naturally evolve along a circle.  That is, these differential equations are part of the ``special set'' that are indeed integrable.
\end{enumerate}
\end{example}
The equivalence of \eqref{eq:ex-2-cor2} and \eqref{eq:ex-2:tvc-alg} validates Corollary~\ref{corr:2} and hence Theorem~\ref{theorem:ross}.  In principle, this equivalence should not be a surprise because the uncoordinated clock times simply correspond to a parametrization of the boundary manifold in terms of differential equations with phantom initial conditions (see Definition~\ref{def:phantom}) wherein the independent variable $s_a$ is a phantom or dummy time variable. Note however that Corollary~\ref{corr:2} is more general than a mere equivalence to Theorem~\ref{theorem:TVC-alg}. The former is more general and holds even when the boundary differential equations are not integrable.  Practical examples of this case are boundary conditions in the restricted three-body problem\cite{Farq-1970,szebehely_theory_1967}.
%
\begin{remark}
It is apparent from the discussions thus far that boundary conditions specified in terms of differential equations are indeed more ``natural'' and general than those specified in terms of algebraic equations.
\end{remark}

\subsection{Case 3: Synchronized Boundary Clock Times}
The most common situations of synchronized boundary clock times in aerospace engineering are those corresponding to a rendezvous and intercept\cite{brysonHo,ross-book,longuski}.  In cases that are neither rendezvous nor intercept, the boundary clock times may still be synchronized such as the cases implied in Fig.~\ref{fig:boundaryOrbits}.  In such cases, the boundary clock times are synchronized because the differential equations in the elliptic restricted three-body problem are nonautonomous\cite{Farq-1970,szebehely_theory_1967}.

As in Cases~$1$ and $2$, we discuss the problem for initial-time conditions only.  Results for the final-time conditions follow similarly.  In this spirit, using Definition~\ref{def:sync-bc}, we have
$$\phi_0(s_a, t_0) = s_a - t_0 \quad ( = 0 )$$
As a result we have,
\begin{equation}\label{eq:cor3:parsa=-1}
\partial_{s_a}\phi_0(s_a, t_0) = 1 = - \partial_{t_0}\phi_0(s_a, t_0)
\end{equation}
Furthermore, because $s_a$ is now the same as  $t_0$, where the latter variable is already constrained by \eqref{eq:I0Ifdef}, we set $s_a$ to be an unconstrained (i.e., $Q_a = \Real$) dummy variable that has no impact on the endpoint cost function.  Assume further that the running cost does not depend upon $s_a$.  Taken together, all of these assumptions imply,
\begin{subequations}\label{eq:cor3:sa=nada}
\begin{align}
\partial_{s_a}E(\bx_0, t_0, s_a, \bx^\sharp_a, s^\sharp_a; \bx_f, t_f,  s_b, \bx^\sharp_b, s^\sharp_b) &= 0 \quad \forall\ s_a \in Q_a = R\\
\sigma_{s_a} &= 0 \\
\eta_{s_a} &= 0
\end{align}
\end{subequations}
Substituting \eqref{eq:cor3:parsa=-1} and \eqref{eq:cor3:sa=nada} in \eqref{eq:ross-tvc-lam0}  and \eqref{eq:ross-hvc-0} yields,
\begin{subequations}
\begin{align}
 \bpsi_0^T \bg_0(\bx_0, s_a) &=  \nu_a^C  \label{eq:corr3-proof-1a}\\
\mathcal{H}[@t_0] - \nu^0\, \partial_{t_0}E(\bx_0, t_0, \bx^\sharp_a, s^\sharp_a; \bx_f, t_f,  s_b, \bx^\sharp_b, s^\sharp_b) - \nu_{t_0} &= - \nu_a^C \label{eq:corr3-proof-1b}
\end{align}
\end{subequations}
Adding \eqref{eq:corr3-proof-1a} and \eqref{eq:corr3-proof-1b} eliminates $\nu_a^C$ resulting in the following equation:
$$ \mathcal{H}[@t_0] - \nu^0\, \partial_{t_0}E(\bx_0, t_0, \bx^\sharp_a, s^\sharp_a; \bx_f, t_f,  s_b, \bx^\sharp_b, s^\sharp_b) - \nu_{t_0} + \bpsi_0^T \bg_0(\bx_0, s_a) = 0  $$
Hence, we have the following corollary:
\begin{corollary}\label{corr:3}
Suppose the boundary clock times in the definition of Problem~$(\bP^{ode+})$ are synchronized according to Definition~\ref{def:sync-bc}. Let  $Q_a = \Real = Q_b$ in \eqref{eq:Sode+=bydef}.  Suppose that the endpoint and running cost functions are not dependent on the local time-like variables $s_a$ and $s_b$.
Then the transversality and Hamiltonian value conditions given by \eqref{eq:ross-tvc-lam0+f} and \eqref{eq:ross-hvc} in Theorem~\ref{theorem:ross} reduce to,
\begin{multline}\label{eq:corr3:tvc-0}
\mathcal{H}[@t_0] - \nu^0\,\partial_{t_0}E(\bx_0, t_0, x^\sharp_a, s^\sharp_a; \bx_f, t_f, \bx^\sharp_b, s^\sharp_b)  - \nu_{t_0} \\
= [\bg_0(\bx_0, t_0)]^T \left[\blam(t_0) + \nu^0\,\partial_{\bx_0} E(\bx_0, t_0, x^\sharp_a, s^\sharp_a; \bx_f, t_f, \bx^\sharp_b, s^\sharp_b)   \right]
\end{multline}
\begin{multline}\label{eq:corr3:tvc-f}
\mathcal{H}[@t_f] + \nu^0\,\partial_{t_f} E(\bx_0, t_0, x^\sharp_a, s^\sharp_a; \bx_f, t_f, \bx^\sharp_b, s^\sharp_b)   + \nu_{t_f} \\
= [\bg_f(\bx_f, t_f)]^T \left[\blam(t_f) - \nu^0\,\partial_{\bx_f} E(\bx_0, t_0, x^\sharp_a, s^\sharp_a; \bx_f, t_f, \bx^\sharp_b, s^\sharp_b)   \right]
\end{multline}
\end{corollary}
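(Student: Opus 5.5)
The plan is to specialize Theorem~\ref{theorem:ross} directly, following the derivation already carried out in the paragraphs preceding the corollary. I will do the initial-time case in full and then repeat it with the obvious sign changes at the final time.

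First, I substitute the synchronization law $\phi_0(s_a,t_0)=s_a-t_0$ from Definition~\ref{def:sync-bc}. This gives \eqref{eq:cor3:parsa=-1}, namely $\partial_{s_a}\phi_0=1$ and $\partial_{t_0}\phi_0=-1$. Under the hypotheses $Q_a=\Real$ and cost independence of $s_a$, the three terms in \eqref{eq:cor3:sa=nada} vanish: $\eta_{s_a}=0$ because the normal cone to $\Real$ is $\set{0}$, $\sigma_{s_a}=0$ by \eqref{eq:sigma=bydef-2}, and $\partial_{s_a}E=0$.

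With these substitutions, \eqref{eq:ross-tvc-lam0} collapses to $\bpsi_0^T\bg_0(\bx_0,s_a)=\nu_a^C$ and \eqref{eq:ross-hvc-0} becomes \eqref{eq:corr3-proof-1b}. Adding the two eliminates the clock-coordination multiplier $\nu_a^C$. Next I replace $\bpsi_0$ using \eqref{eq:nu0-def}, i.e.\ $\bpsi_0=-\big(\blam(t_0)+\nu^0\partial_{\bx_0}E\big)$. Moving the term to the right side yields
$$\mathcal{H}[@t_0]-\nu^0\partial_{t_0}E-\nu_{t_0}=\bg_0^T\big(\blam(t_0)+\nu^0\partial_{\bx_0}E\big).$$
Finally, since $s_a=t_0$ on the constraint, I write $\bg_0(\bx_0,s_a)=\bg_0(\bx_0,t_0)$ and drop $s_a$ from the argument list of $E$, which it does not affect. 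This gives \eqref{eq:corr3:tvc-0}.

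For the final time I use $\phi_f=s_b-t_f$, so $\partial_{s_b}\phi_f=1$ and $\partial_{t_f}\phi_f=-1$. Then \eqref{eq:ross-tvc-lamf} gives $\bpsi_f^T\bg_f=\nu_b^C$, and \eqref{eq:ross-hvc-f} gives $\mathcal{H}[@t_f]+\nu^0\partial_{t_f}E+\nu_{t_f}=\nu_b^C$. Equating the two and inserting $\bpsi_f=\blam(t_f)-\nu^0\partial_{\bx_f}E$ from \eqref{eq:nuf-def} yields \eqref{eq:corr3:tvc-f}.

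The only real obstacle is sign bookkeeping. The signs differ between \eqref{eq:nu0-def} and \eqref{eq:nuf-def}, and between \eqref{eq:ross-hvc-0} and \eqref{eq:ross-hvc-f}. In the initial case, adding the two relations produces $+\bpsi_0^T\bg_0$ on the left-hand side, and the minus sign in $-\bpsi_0$ is what turns this into $+\bg_0^T(\blam(t_0)+\nu^0\partial_{\bx_0}E)$ on the right. I will check each sign explicitly rather than appeal to symmetry. The remaining steps are direct substitution, and the rest of Theorem~\ref{theorem:ross}, such as \eqref{eq:ross-param}, is left unchanged.
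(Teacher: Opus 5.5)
Your proposal is correct and follows the paper's own derivation. You substitute $\phi_0=s_a-t_0$ to get $\partial_{s_a}\phi_0=1=-\partial_{t_0}\phi_0$, set $\eta_{s_a}=\sigma_{s_a}=\partial_{s_a}E=0$, add \eqref{eq:ross-tvc-lam0} and \eqref{eq:ross-hvc-0} to eliminate $\nu_a^C$, and then insert \eqref{eq:nu0-def}; your explicit final-time computation (both relations equal $\nu_b^C$) and the replacement of $s_a$ by $t_0$ in $\bg_0$ simply spell out what the paper leaves as ``follows similarly.''
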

Although \eqref{eq:corr3:tvc-0} and \eqref{eq:corr3:tvc-f} do not contain the symbols $s_a$ and $s_b$, it may be preferable to retain them in the definitions of $S_0^{ode+}$ and $S_f^{ode+}$ because they are dummy variables equivalent to $t_0$ and $t_f$ respectively.  Keeping $s_a$ and $s_b$ separate from $t_0$ and $t_f$ allows us to retain the symbols $\bx_0'(s_a)$ and $\bx_f'(s_b)$ respectively. Otherwise, $S_0^{ode+}$ and $S_f^{ode+}$ would need to be redefined with the symbols $\bx_0'(t_0)$ and $\bx_f'(t_f)$ to represent ``time-derivatives'' with respect to the initial and final-time variables $t_0$ and $t_f$, leading to the awkward and confusing notation,
\begin{equation}\label{eq:awkward}
\bx_0' \equiv \frac{d\bx_0}{d t_0} \quad\text{and}\quad \bx_f' \equiv \frac{d\bx_f}{d t_f}
\end{equation}
The implication of using $t_0$ and $t_f$ as an independent time variable in \eqref{eq:awkward} is easily prevented by retaining the symbols $s_a$ and $s_b$. 

A simpler version of Corollary~\ref{corr:3} is obtained when $P_a$ and $P_b$ are singletons. This situation is depicted in Fig.~\ref{fig:ODECurves30f}.  
%
\begin{figure}[h!]
      \centering
      {\parbox{0.9\columnwidth}{
      \centering
      {\includegraphics[width = 0.5\columnwidth]{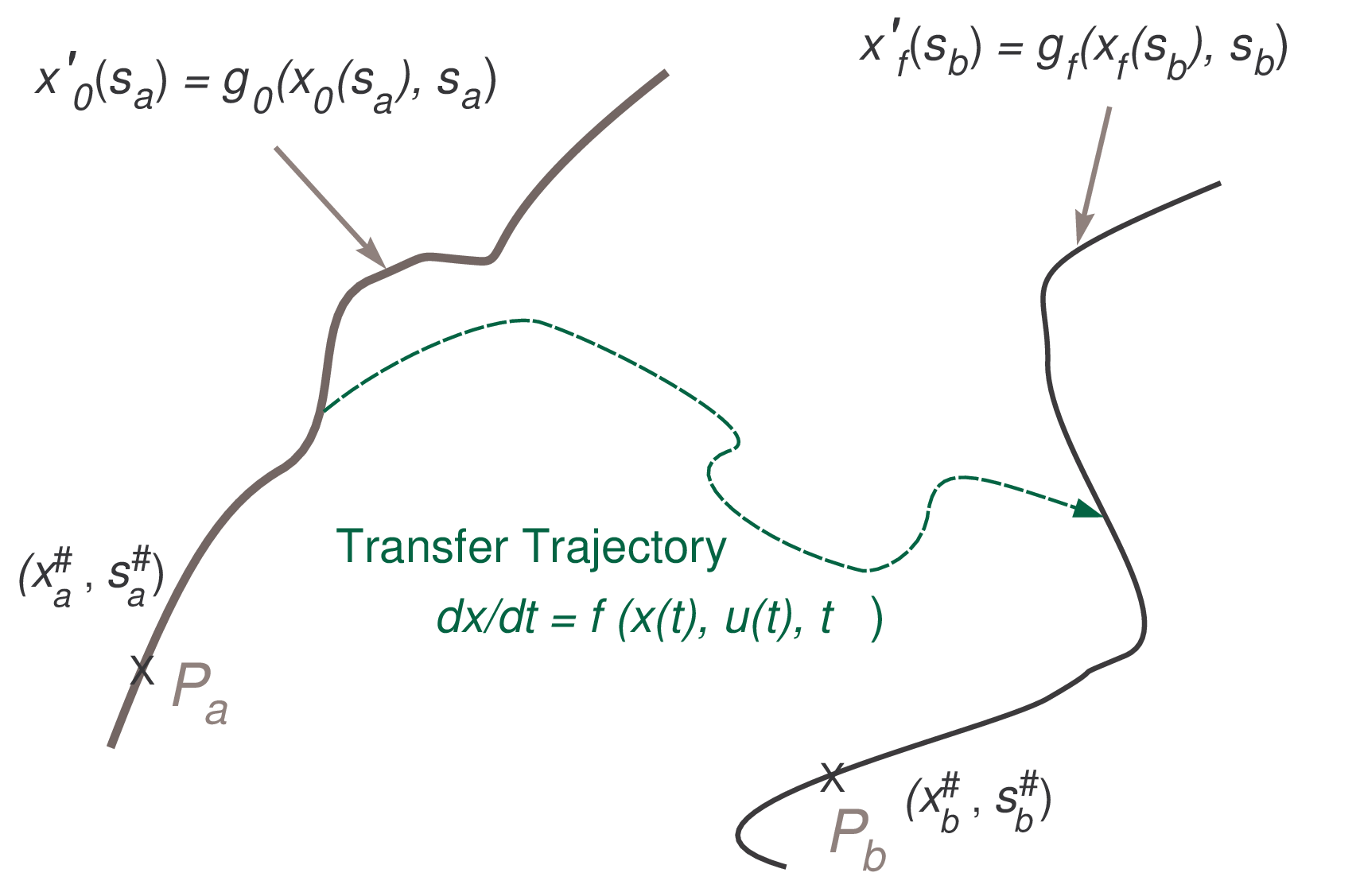}}
      \caption{{Schematic for Corolllary~\ref{corr:4}.}}\label{fig:ODECurves30f}
      }
      }
\end{figure}
%
In the context of the motivating problems discussed in Section~\ref{sec:Intro}, Fig.~\ref{fig:ODECurves30f} may be construed as an abstract version of transferring a spacecraft from one of the orbits shown in Fig.~\ref{fig:boundaryOrbits} to rendezvous with another spacecraft in a different orbit. Note  also that the symbol $t^\sharp_0$ in \eqref{eq:intro-init-0} is the same as $s^\sharp_a$ (with $s^\sharp_a$ fixed). In Section~\ref{sec:Intro} it was more convenient to use the symbol $t^\sharp_0$ because the concept of ``$s$-time'' was not developed until Section~\ref{sec:newTVCs}.  In any case, when $P_a$ and $P_b$ are singletons, then $\bx^\sharp_a, s^\sharp_a, \bx^\sharp_b$ and $s^\sharp_b$ are no longer optimization variables. As a result, \eqref{eq:corr3:tvc-0} and \eqref{eq:corr3:tvc-f} simplify to,
\begin{subequations}\label{eq:corr4-main}
\begin{align}
\mathcal{H}[@t_0] - \nu^0\,\partial_{t_0}E\left(\bx_0, t_0, \bx_f, t_f \right) - \nu_{t_0}
& = [\bg_0(\bx_0, t_0)]^T \left[\blam(t_0) + \nu^0\,\partial_{\bx_0} E\left(\bx_0, t_0, \bx_f, t_f \right) \right]\\
\mathcal{H}[@t_f] + \nu^0\,\partial_{t_f}E\left(\bx_0, t_0, \bx_f, t_f \right) + \nu_{t_f}
& = [\bg_f(\bx_f, t_f)]^T \left[\blam(t_f) - \nu^0\,\partial_{\bx_f} E\left(\bx_0, t_0, \bx_f, t_f \right) \right]
\end{align}
\end{subequations}
Equation~(\ref{eq:corr4-main}) generates a number of interesting results that are summarized in the following corollary:
\begin{corollary}\label{corr:4}
Suppose the boundary conditions in the definition of Problem~$(\bP^{ode+})$ are given by,
\begin{equation*}
S_0^{ode+} := \Big\{(\bx_0, t_0, s_a)  \in \real{N_x} \times \Real \times \Real:\ \bx_0 = \bx_0(s_a), \quad t_0 -s_a = 0,  \quad \bx_0'(s_a) = \bg_0(\bx_0(s_a), s_a), \quad \bx_0(s^\sharp_a) = \bx^\sharp_a \Big\}
\end{equation*}
\begin{equation*}
S_f^{ode+} := \Big\{(\bx_f, t_f, s_b) \in \real{N_x} \times \Real \times \Real:\  \bx_f = \bx_f(s_b), \quad t_f- s_b = 0, \quad \bx_f'(s_b) = \bg_f(\bx_f(s_b), s_b), \quad \bx_f(s^\sharp_b) = \bx^\sharp_b
 \Big\}
\end{equation*}
where $(\bx^\sharp_a, s^\sharp_a)$ and $(\bx^\sharp_b, s^\sharp_b)$ are specified points in $\real{N_x} \times \Real$.  Furthermore, let $t_0$ and $t_f$ be unconstrained; i.e., $t_0 \in \Real$ and $t_f \in \Real$ (with $t_f > t_0$).  Then the following holds:
\begin{enumerate}
\item If the clock times are free with $\partial_{t_0}E\left(\bx_0, t_0, \bx_f, t_f\right) =0 = \partial_{t_f}E\left(\bx_0, t_0, \bx_f, t_f\right)$, then the Hamiltonian value conditions are given by,
\begin{subequations}\label{eq:corr4-time-free-result}
\begin{align}
\mathcal{H}[@t_0] & = \left[\blam(t_0) + \nu^0\,\partial_{\bx_0} E\left(\bx_0, \bx_f \right) \right]^T  \bg_0(\bx_0, t_0)\\
\mathcal{H}[@t_f] & =  \left[\blam(t_f) - \nu^0\,\partial_{\bx_f} E\left(\bx_0, \bx_f \right) \right]^T \bg_f(\bx_f, t_f)
\end{align}
\end{subequations}
\item For minimum transfer-time ``normal'' problems, namely, $E\left(\bx_0, t_0, \bx_f, t_f\right) =t_f - t_0$ and $\nu^0 = 1$, the Hamiltonian value conditions are given by,
\begin{subequations}
\begin{align}
\mathcal{H}[@t_0] +1
& = \blam^T(t_0)\, \bg_0(\bx_0, t_0)\\
\mathcal{H}[@t_f] + 1
& = \blam^T(t_f) \, \bg_f(\bx_f, t_f)
\end{align}
\end{subequations}
\item If the problem has no endpoint cost (i.e., $E \equiv 0$ and the cost functional is given purely in terms of the running cost) then the Hamiltonian value conditions are given quite simply by,
\begin{subequations}
\begin{align}
\mathcal{H}[@t_0] & = \blam^T(t_0)\, \bg_0(\bx_0, t_0)\\
\mathcal{H}[@t_f] & = \blam^T(t_f) \, \bg_f(\bx_f, t_f)
\end{align}
\end{subequations}
\end{enumerate}
\end{corollary}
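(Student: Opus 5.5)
Corollary~\ref{corr:4} is a specialization of Corollary~\ref{corr:3}, which is itself obtained from Theorem~\ref{theorem:ross}. The plan is therefore to start from \eqref{eq:corr4-main}, justify it, and then substitute the three particular endpoint cost structures.

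\textbf{Step 1: justify \eqref{eq:corr4-main}.} Since $P_a=\{(\bx^\sharp_a,s^\sharp_a)\}$ and $P_b=\{(\bx^\sharp_b,s^\sharp_b)\}$ are singletons, the phantom parameters are fixed data, not optimization variables. Equations~\eqref{eq:ross-param} are then vacuous: the normal cone to a singleton is the whole space, so $\etab_{x^\sharp_a}$, $\eta_{s^\sharp_a}$, and the corresponding final-time multipliers absorb those equations. Moreover, $E$ and $F$ may be regarded as functions of $(\bx_0,t_0,\bx_f,t_f)$ only.

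The boundary sets use $t_0-s_a=0$, i.e.\ $\phi_0=t_0-s_a$, which is the negative of Definition~\ref{def:sync-bc}. This only flips the sign of $\nu^C_a$ and does not affect the elimination. With $Q_a=Q_b=\Real$ and $E,F$ independent of $s_a,s_b$, the hypotheses of Corollary~\ref{corr:3} hold. Hence \eqref{eq:corr3:tvc-0}--\eqref{eq:corr3:tvc-f} reduce to \eqref{eq:corr4-main}, with $\bg_0(\bx_0,s_a)=\bg_0(\bx_0,t_0)$ because $s_a=t_0$.

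For the record, that reduction runs as follows. In \eqref{eq:ross-tvc-lam0} the term $\nu_a^C\partial_{s_a}\phi_0$ survives, and in \eqref{eq:ross-hvc-0} the term $\nu_a^C\partial_{t_0}\phi_0$ survives with the opposite sign. Adding the two equations eliminates $\nu_a^C$. Replacing $\bpsi_0$ via \eqref{eq:nu0-def}, i.e.\ $\bpsi_0=-(\blam(t_0)+\nu^0\partial_{\bx_0}E)$, then gives the stated form.

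\textbf{The sign check is the one genuinely delicate point.} Carrying $-\bpsi_0$ through the sum yields $\mathcal{H}[@t_0]-\nu^0\partial_{t_0}E-\nu_{t_0}=-\bpsi_0^T\bg_0=[\blam(t_0)+\nu^0\partial_{\bx_0}E]^T\bg_0$. For the final time, \eqref{eq:ross-tvc-lamf}, \eqref{eq:ross-hvc-f} and \eqref{eq:nuf-def} combine with $\bpsi_f=\blam(t_f)-\nu^0\partial_{\bx_f}E$ to give the second line. I expect this sign bookkeeping, together with confirming that the singleton normal cones really decouple \eqref{eq:ross-param}, to be the only real obstacle.

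\textbf{Step 2: the three cases.} Since $t_0,t_f\in\Real$ are unconstrained, $\nu_{t_0}=\nu_{t_f}=0$ by \eqref{eq:normal-cone-1D}. Each item then follows by direct substitution into \eqref{eq:corr4-main}:
\begin{enumerate}
\item Setting $\partial_{t_0}E=\partial_{t_f}E=0$ gives \eqref{eq:corr4-time-free-result} at once.
\item For $E=t_f-t_0$ with $\nu^0=1$, we have $\partial_{t_0}E=-1$, $\partial_{t_f}E=1$ and $\partial_{\bx_0}E=\partial_{\bx_f}E=\bzero$. The left-hand sides become $\mathcal{H}[@t_0]+1$ and $\mathcal{H}[@t_f]+1$, and the right-hand sides become $\blam^T(t_0)\bg_0$ and $\blam^T(t_f)\bg_f$.
\item For $E\equiv0$, all derivatives of $E$ vanish, leaving $\mathcal{H}[@t_0]=\blam^T(t_0)\bg_0(\bx_0,t_0)$ and $\mathcal{H}[@t_f]=\blam^T(t_f)\bg_f(\bx_f,t_f)$.
\end{enumerate}
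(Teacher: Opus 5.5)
Your proposal is correct and follows essentially the paper's own route: specialize Theorem~\ref{theorem:ross} through Corollary~\ref{corr:3} to \eqref{eq:corr4-main}. You eliminate the clock-coordination multipliers, substitute $\bpsi_0$ and $\bpsi_f$ from \eqref{eq:nu0-def} and \eqref{eq:nuf-def}, and let the singleton $P_a,P_b$ make \eqref{eq:ross-param} vacuous; you then set $\nu_{t_0}=\nu_{t_f}=0$ and substitute the three endpoint costs. Your observation that the statement's $t_0-s_a=0$ is the negative of Definition~\ref{def:sync-bc}, and only flips the sign of $\nu_a^C$, is a valid detail the paper leaves implicit. So is your phrasing ``combine'' at the final time, where the two equations must in fact be subtracted rather than added to eliminate $\nu_b^C$.
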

According to Corollary~\ref{corr:4}, time-free problems do not necessarily exhibit the familiar condition $\mathcal{H}[@t_f] = 0$.  Note however that this statement in Corollary~\ref{corr:4} is not a contradiction because \eqref{eq:corr4-time-free-result} applies to synchronized clock times whereas the the better-known condition of  $\mathcal{H}[@t_f] = 0$ applies to uncoordinated clock times.  Indeed, from Corollary~\ref{corr:2}, \eqref{eq:corr-2-0+f} we get $\mathcal{H}[@t_f] = 0$ if $\partial_{t_f}E(\bx_0, t_0, \bx_f, t_f;  \bx^\sharp_a, s^\sharp_a, \bx^\sharp_b, s^\sharp_b) = 0$ and  $ \nu_{t_f} = 0$.  For the same reason, minimum transfer time problems in Corollary~\ref{corr:4} do not exhibit the more familiar condition that the final value of the lower Hamiltonian is $-1$.

\section{A Proof of Theorem~\ref{theorem:ross} with Insights on  $\bpsi_0$ and $\bpsi_f$}\label{sec:proof}
The proof of Theorem~\ref{theorem:ross} involves several steps.  We use three lemmas to organize these steps. These lemmas also provide further insights on how the weak adjoint covectors $\bpsi_0$ and $\bpsi_f$ appear quite naturally.

\subsection{Development of the Three Lemmas Required for a Proof of Theorem~\ref{theorem:ross}}

In the first lemma, we rewrite $S_0$ and $S_f$ in a form that is more conducive to the application of Theorems~\ref{theorem-TVC-2} and \ref{theorem:TVC-alg}.
%
\begin{lemma}\label{lemma:equiv}
The sets $S_0^{ode+}$ and $S_f^{ode+}$ given by \eqref{eq:S0=def} and \eqref{eq:Sf=def} respectively, may be written equivalently as,
\begin{subequations}\label{eq:S0+Sf-equiv}
\begin{multline}\label{eq:S0=equiv}
S_0^\dag := \Big\{(\bx_0, t_0, s_a, \bx^\sharp_a, s^\sharp_a):\ \bx_0- \bx^\sharp_a - \int^{s_a}_{s^\sharp_a} \bg_0(\bx_0(s), s)\, ds = \bzero,  \\
(\bx^\sharp_a, s^\sharp_a) \in P_a, \quad
\phi_0(s_a,t_0)=0,  \quad t_0 \in I_0, \quad s_a \in Q_a \Big\}
\end{multline}
\begin{multline}\label{eq:Sf=equiv}
S_f^\dag := \Big\{(\bx_f, t_f, s_b, \bx^\sharp_b, s^\sharp_b):\ \bx_f - \bx^\sharp_b - \int^{s_b}_{s^\sharp_b} \bg_f(\bx_f(s), s)\, ds = \bzero, \\
(\bx^\sharp_b, s^\sharp_b) \in P_b, \quad
\phi_f(s_b, t_f)=0,  \quad t_f \in I_f, \quad s_b \in Q_b \Big\}
\end{multline}
\end{subequations}
\end{lemma}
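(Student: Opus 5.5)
The plan is to prove set equality by double inclusion, $S_0^{ode+} = S_0^\dag$, and then to get $S_f^{ode+} = S_f^\dag$ by the same argument with the subscripts changed ($a \to b$, $0 \to f$). In both sets the side conditions $(\bx^\sharp_a, s^\sharp_a) \in P_a$, $\phi_0(s_a,t_0)=0$, $t_0 \in I_0$ and $s_a \in Q_a$ are literally the same. So the whole argument reduces to one claim. For fixed $(\bx^\sharp_a, s^\sharp_a, s_a)$, the triple of conditions
\begin{equation*}
\bx_0 = \bx_0(s_a), \quad \bx_0' = \bg_0(\bx_0, s), \quad \bx_0(s^\sharp_a) = \bx^\sharp_a
\end{equation*}
holds if and only if
\begin{equation*}
\bx_0 - \bx^\sharp_a - \int_{s^\sharp_a}^{s_a} \bg_0(\bx_0(s), s)\, ds = \bzero,
\end{equation*}
where $s \mapsto \bx_0(s)$ is the solution curve through $(\bx^\sharp_a, s^\sharp_a)$. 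This is the fundamental theorem of calculus combined with the integral (Volterra) form of the IVP in \eqref{eq:ivp-a}.

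For the inclusion $S_0^{ode+} \subset S_0^\dag$, I take a point of $S_0^{ode+}$. The map $s \mapsto \bx_0(s)$ then solves the ODE on $Q_a$ and is absolutely continuous: it is $C^1$ under the Picard--Lindel\"of hypotheses, and under Carath\'eodory conditions, per Remark~\ref{rem:picard-cara}, it still holds almost everywhere. Integrating $\bx_0'$ from $s^\sharp_a$ to $s_a$ gives $\bx_0(s_a) - \bx_0(s^\sharp_a) = \int_{s^\sharp_a}^{s_a} \bg_0(\bx_0(s),s)\,ds$. I then substitute $\bx_0(s_a) = \bx_0$ and $\bx_0(s^\sharp_a) = \bx^\sharp_a$. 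Since $s^\sharp_a \in Q_a$ is implicitly assumed (item 7 of the discussion following \eqref{eq:S0+Sf-def}) and $Q_a$ is an interval, the integration interval lies inside $Q_a$.

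For the reverse inclusion $S_0^\dag \subset S_0^{ode+}$, I must interpret the integrand $\bx_0(s)$ in \eqref{eq:S0=equiv}, following Remark~\ref{rem:x0snotation}. It denotes the parametrized function satisfying the integral equation $\bx_0(\tau) = \bx^\sharp_a + \int_{s^\sharp_a}^{\tau} \bg_0(\bx_0(s),s)\,ds$ for every $\tau \in Q_a$, and the displayed condition is this identity evaluated at $\tau = s_a$ with $\bx_0(s_a)$ identified with $\bx_0$. Setting $\tau = s^\sharp_a$ gives $\bx_0(s^\sharp_a) = \bx^\sharp_a$. Differentiating in $\tau$ (Lebesgue differentiation in the Carath\'eodory case) recovers $\bx_0' = \bg_0(\bx_0,s)$. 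Uniqueness of solutions then shows that this is the same curve $C(\bx^\sharp_a, s^\sharp_a)$ used in \eqref{eq:sol-1}, so the two parametrizations, and hence the membership conditions, coincide.

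I expect the main obstacle to be this notational and interpretive step, not any analysis. The condition in $S_0^\dag$ is only a single vector equation at $s_a$, while the integrand refers to the whole curve. The proof must therefore state explicitly that $\bx_0(\cdot)$ in the integral is the solution determined by the phantom initial condition (Definition~\ref{def:phantom}), rather than an arbitrary function. Otherwise the reverse inclusion fails. Once that convention is fixed, both inclusions are routine, and the final-time statement \eqref{eq:Sf=equiv} follows verbatim.
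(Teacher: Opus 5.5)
Your proposal is correct and follows the paper's route. The paper also rewrites the boundary IVP in its integral (Volterra) form, substitutes $\bx_0 = \bx_0(s_a)$, and then asserts that the resulting single equation is equivalent to the three conditions $\bx_0 = \bx_0(s_a)$, $\bx_0'(s_a) = \bg_0(\bx_0(s_a), s_a)$, $\bx_0(s^\sharp_a) = \bx^\sharp_a$; your double inclusion and your explicit convention that $\bx_0(\cdot)$ in the integrand is the solution curve through the phantom initial condition make precise what the paper leaves implicit in that assertion.
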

\begin{proof}
To prove the equivalence between \eqref{eq:S0=def} and \eqref{eq:S0=equiv}, we first rewrite the differential equation $\bx_0'(s_a) = \bg_0(\bx_0(s_a), s_a)$ in its equivalent ``integral'' form\cite{teschl-ode,hale-ode},
\begin{equation}\label{eq:x0-ode-integ-form}
\bx_0(s_a) = \bx^\sharp_a + \int^{s_a}_{s^\sharp_a} \bg_0(\bx_0(s), s)\, ds
\end{equation}
By definition (see \eqref{eq:S0=def}) $\bx_0 = \bx_0(s_a)$.  Substituting $\bx_0$ for $\bx_0(s_a)$ in \eqref{eq:x0-ode-integ-form} generates the constraint equation,
\begin{equation}\label{eq:x0-ode-integ-constraint}
\bx_0- \bx^\sharp_a - \int^{s_a}_{s^\sharp_a} \bg_0(\bx_0(s), s)\, ds = \bzero
\end{equation}
This equation is equivalent to three equations in the definition of $S_0^{ode+}$ given by \eqref{eq:S0=def}, namely, $\bx_0 = \bx_0(s_a)$, $ \bx_0'(s_a) = \bg_0(\bx_0(s_a), s_a)$ and  $\bx_0(s^\sharp_a) = \bx^\sharp_a$.  Replacing these three equations with \eqref{eq:x0-ode-integ-constraint} generates \eqref{eq:S0=equiv}. A proof of the equivalence between \eqref{eq:Sf=def} and \eqref{eq:Sf=equiv} follows similarly.
\end{proof}

From Lemma~\ref{lemma:equiv}, it follows that the set $S^{ode+}= S_0^{ode+} \times S_f^{ode+}$ defined in \eqref{eq:Sode+=bydef} is some combination of the endpoint sets defined in Theorems~\ref{theorem-TVC-2} and \ref{theorem:TVC-alg}. Hence, we now combine the most relevant components of Theorems~\ref{theorem-TVC-2} and \ref{theorem:TVC-alg} to construct a lemma that directly supports a proof of Theorem~\ref{theorem:ross}.
%
\begin{lemma}\label{lemma:TVC}
Suppose Problem~($\widetilde{\bP}$) denotes the special case of Problem~$(\bP)$ where $S$ is given by a combination of algebraic inequalities and an abstract set according to,
\begin{equation}\label{eq:S=hyb}
S = \widetilde{S} := \set{(\bx_0, t_0, \bx_f, t_f, \widetilde{\bp}):\ \be^L \le \be(\bx_0, t_0, \bx_f, t_f, \widetilde{\bp}) \le \be^U,\ \widetilde{\bp} \in S_{\widetilde{p}} \subset \real{N_{\widetilde{p}}} }
\end{equation}
Let $\overline{E}(\bnu, \bx_0, t_0, \bx_f, t_f, \widetilde{\bp})$  be defined by \eqref{eq:Ebar=bydef} with $\bp$ replaced by $\widetilde{\bp}$ and $\bnu$ satisfying \eqref{eq:complementarity}.
Then the transversality conditions for Problem~$(\widetilde{\bP})$ is given by,
\begin{equation}\label{eq:TVC-hyb}
\big(-\blam(t_0), \mathcal{H}[@t_0], \blam(t_f), -\mathcal{H}[@t_f], -\bsigma \big) = \partial_{(\bx_0, t_0, \bx_f, t_f, \bp)} \overline{E}(\bnu, \bx_0, t_0, \bx_f, t_f, \widetilde{\bp}) + (\bzero, \etab_{\widetilde{p}})
\end{equation}
where  $(\bzero, \etab_{\widetilde{p}})$ is an $(2(N_x+1) + N_{\widetilde{p}})$-dimensional vector whose first $(2N_x+2)$ components are zero and  $\etab_{\widetilde{p}} \in N_{S_{\widetilde{p}}}(\widetilde{\bp})$.
\end{lemma}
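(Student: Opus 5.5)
The plan is to derive Lemma~\ref{lemma:TVC} directly from Theorem~\ref{theorem-TVC-2}. We apply that theorem with $\bp$ replaced by $\widetilde{\bp}$ and $S=\widetilde{S}$. It gives
\[
\big(-\blam(t_0), \mathcal{H}[@t_0], \blam(t_f), -\mathcal{H}[@t_f], -\bsigma \big) = \nu^0\,\partial E + \etab, \qquad \etab\in N_{\widetilde{S}}(\bx_0,t_0,\bx_f,t_f,\widetilde{\bp}).
\]
The whole task is therefore to compute $N_{\widetilde{S}}$.

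The first step is to observe that $\widetilde{S}=S^{alg}\cap \big(\real{2N_x+2}\times S_{\widetilde{p}}\big)$. Here $S^{alg}$ is the algebraic set \eqref{eq:S=efun} written in the full variable $(\bx_0,t_0,\bx_f,t_f,\widetilde{\bp})$.

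The second step computes each normal cone separately. For the product set, the tangent cone is $\real{2N_x+2}\times T_{S_{\widetilde{p}}}(\widetilde{\bp})$. Taking polars and limsups as in Definition~\ref{definition:NC} gives the normal cone $\{\bzero\}\times N_{S_{\widetilde{p}}}(\widetilde{\bp})$, whose elements are exactly the vectors $(\bzero,\etab_{\widetilde{p}})$. For $S^{alg}$, Remark~\ref{remark:eta-nu-connection} (i.e.\ Theorem~\ref{theorem:TVC-alg}) gives normals of the form $[\partial\be]^T\bnu$ with $\bnu\dagger\be$.

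The third step invokes the intersection rule for limiting normal cones, $N_{A\cap B}\subset N_A+N_B$. It holds under the standard qualification condition that the only pair $\xi_A\in N_A$, $\xi_B\in N_B$ with $\xi_A+\xi_B=0$ is the trivial one (see \cite{clarke-2013book,vinter}). This yields $\etab=[\partial\be]^T\bnu+(\bzero,\etab_{\widetilde{p}})$. Adding $\nu^0\partial E$ to the $\bnu$ term gives $\partial\overline{E}$, by the definition \eqref{eq:Ebar=bydef}. This is \eqref{eq:TVC-hyb}.

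The main obstacle is the qualification condition in the third step. It requires that no nonzero $\bnu$ make $[\partial\be]^T\bnu$ vanish in its $(\bx_0,t_0,\bx_f,t_f)$ components while lying in $-N_{S_{\widetilde{p}}}$ in its $\widetilde{\bp}$ components. I would handle it the way the paper treats Theorem~\ref{theorem:TVC-alg}: assume a constraint qualification on the active constraints (linear independence, or Mangasarian--Fromovitz relative to $S_{\widetilde{p}}$). If the qualification fails, I would fall back on the Fritz John form, in which case $\nu^0$ may vanish, the abnormal case, which is already allowed since $\nu^0\ge 0$.

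In the smooth, regular setting of the paper, a more elementary route is also available. We append the constraint $\widetilde{\bp}\in S_{\widetilde{p}}$ to the KKT system of Theorem~\ref{theorem:TVC-alg} and treat it through its normal cone. This is exactly the generalized KKT statement referred to in Remark~\ref{rem:KKT}.
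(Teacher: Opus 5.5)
Your proposal is correct and is essentially the argument the paper intends. The paper just calls the lemma a direct consequence of Theorems~\ref{theorem-TVC-2} and \ref{theorem:TVC-alg} and cites \cite{vinter,clarke-2013book}, while you carry out the missing step explicitly: you split $N_{\widetilde{S}}$ with the limiting-normal intersection rule, using $\{\bzero\}\times N_{S_{\widetilde{p}}}$ for the product factor and Remark~\ref{remark:eta-nu-connection} for $S^{alg}$, and you name the qualification condition the paper leaves to the ``technical conditions'' of Remark~\ref{rem:Ebar=algOnly} (where the paper hints at exact penalization with $K d_{S_{\widetilde{p}}}$ instead). One correction to your fallback: when that qualification fails, the Fritz John form holds only in the sense that $(\blam,\nu^0)$ may vanish altogether, with nontriviality carried by $(\bnu,\etab_{\widetilde{p}})$, which is strictly weaker than the abnormal case $\nu^0=0$ of Theorem~\ref{theorem-TVC-2}, whose multiplier $(\blam,\nu^0)$ is nontrivial.
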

\begin{proof}
This lemma is a direct consequence of Theorems~\ref{theorem-TVC-2} and \ref{theorem:TVC-alg}.  Complete details of this ``multiplier rule'' derived from first-principles are given in \cite{vinter,clarke-2013book}.
\end{proof}
\begin{remark}\label{rem:Ebar=algOnly}
The ``endpoint Lagrangian'' $\overline{E}$ in \eqref{eq:TVC-hyb} corresponds to only that portion of the Lagrangian that incorporates just the algebraic constraints. The ``full'' endpoint Lagrangian is given by\cite{vinter,clarke-classic-1990},
\begin{equation}
 \overline{E}(\bnu, \bx_0, t_0, \bx_f, t_f, \widetilde{\bp}) + K d_{S_{\widetilde{p}}}(\widetilde{\bp})
\end{equation}
where $d_{S_{\widetilde{p}}}(\widetilde{\bp})$ is the distance function to the set $S_{\widetilde{p}}$ at the point $\widetilde{\bp}$ and $K > 0$ is a sufficiently large number. The distance function is nonsmooth\cite{vinter,clarke-classic-1990}. Its generalized derivative is a set which turns out to be the normal cone (under certain technical conditions whose discussions are beyond the scope of this paper).  See Theorem 4.8.5 in \cite{vinter} and Proposition 2.4.2 and Theorem 6.1.1 in \cite{clarke-classic-1990}.
\end{remark}
Finally, we need the following basic lemma to shorten the proof of Theorem~\ref{theorem:ross}.
%
\begin{lemma}\label{lemma:neat}
The following equations hold:
\begin{subequations}
\begin{align}
\partial_{s_a} \left(\bx_0- \bx^\sharp_a - \int^{s_a}_{s^\sharp_a} \bg_0(\bx_0(s), s)\, ds \right)  &= -\bg_0(\bx_0, s_a) \label{eq:neat-1}\\
\partial_{s^\sharp_a} \left(\bx_0- \bx^\sharp_a - \int^{s_a}_{s^\sharp_a} \bg_0(\bx_0(s), s)\, ds \right) &= \bg_0(\bx^\sharp_a, s^\sharp_a) \\
\partial_{s_b} \left(\bx_f- \bx^\sharp_b - \int^{s_b}_{s^\sharp_b} \bg_f(\bx_f(s), s)\, ds \right)  &= -\bg_f(\bx_f, s_b) \\
\partial_{s^\sharp_b} \left(\bx_f- \bx^\sharp_b - \int^{s_b}_{s^\sharp_b} \bg_f(\bx_f(s), s)\, ds \right) &= \bg_f(\bx^\sharp_b, s^\sharp_b)
\end{align}
\end{subequations}
\end{lemma}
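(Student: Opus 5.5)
The four identities follow from the Leibniz rule (fundamental theorem of calculus) applied to a variable-limit integral. The point that needs care is what is held fixed when differentiating. Following Lemma~\ref{lemma:equiv}, the expression
\[
\be_0 := \bx_0- \bx^\sharp_a - \int^{s_a}_{s^\sharp_a} \bg_0(\bx_0(s), s)\, ds
\]
is treated as an algebraic function of the independent arguments $(\bx_0, s_a, \bx^\sharp_a, s^\sharp_a)$. Here $\bx_0$ is the endpoint variable, not the function $\bx_0(\cdot)$ evaluated at $s_a$. The integrand $s\mapsto \bg_0(\bx_0(s),s)$ is taken along the flow of the boundary ODE, so it is a fixed continuous function of $s$ over the window of integration.

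For the partial derivative with respect to $s_a$, the terms $\bx_0$ and $\bx^\sharp_a$ do not depend on $s_a$. The fundamental theorem of calculus then gives
\[
\partial_{s_a}\int^{s_a}_{s^\sharp_a}\bg_0(\bx_0(s),s)\,ds=\bg_0(\bx_0(s_a),s_a).
\]
On the feasible set we have $\bx_0(s_a)=\bx_0$ (the first defining relation of $S_0^{ode+}$), so this equals $\bg_0(\bx_0,s_a)$. With the minus sign in front of the integral, this yields \eqref{eq:neat-1}.

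For the derivative with respect to $s^\sharp_a$, the lower limit produces $-\bg_0(\bx_0(s^\sharp_a),s^\sharp_a)$. The side condition $\bx_0(s^\sharp_a)=\bx^\sharp_a$ turns this into $-\bg_0(\bx^\sharp_a,s^\sharp_a)$, and the outer minus sign gives $+\bg_0(\bx^\sharp_a,s^\sharp_a)$. The two final-time identities follow by the same two computations with $(a,0)$ replaced by $(b,f)$.

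The main obstacle, and where I would spend the words, is justifying this partial-derivative convention. One must argue that the integrand does not vary when $s_a$ or $s^\sharp_a$ is perturbed with $\bx^\sharp_a$ held fixed.

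This is subtle for $s^\sharp_a$. Strictly, changing $s^\sharp_a$ while holding $\bx^\sharp_a$ fixed changes the trajectory, and hence the integrand. I would handle this by taking the trajectory $\bx_0(\cdot)$ in the integrand as frozen at the candidate optimum. This is the same weak, windowed viewpoint as Definition~\ref{def:weakAdj}: the constraint is linearized with $\bx_0(\cdot)$ fixed and only the limits moving.

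Alternatively, I could verify consistency through the flow map $\Phi(s_a;s^\sharp_a,\bx^\sharp_a)$. The standard sensitivity identity is
\[
\partial_{s^\sharp}\Phi=-\partial_{\bx^\sharp}\Phi\,\bg_0(\bx^\sharp_a,s^\sharp_a).
\]
It is consistent with the lemma when the multipliers are interpreted as the weak adjoint covector $\bpsi_0$.

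Continuity of $\bg_0$, which is assumed, or the Carath\'eodory setting of Remark~\ref{rem:picard-cara} with Lebesgue points, suffices for the fundamental theorem of calculus step.
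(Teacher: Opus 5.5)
Your argument matches the paper's proof: apply the fundamental theorem of calculus to the variable-limit integral with the integrand $s\mapsto\bg_0(\bx_0(s),s)$ held fixed, then substitute $\bx_0(s_a)=\bx_0$ and $\bx_0(s^\sharp_a)=\bx^\sharp_a$ on the feasible set. The paper uses this frozen-trajectory convention without saying so, and you are right to state it explicitly.

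One caution on your flow-map aside. The true sensitivity is $\partial_{s^\sharp_a}\bigl(\bx_0-\Phi(s_a;s^\sharp_a,\bx^\sharp_a)\bigr)=\partial_{\bx^\sharp}\Phi\,\bg_0(\bx^\sharp_a,s^\sharp_a)$. This differs from $\bg_0(\bx^\sharp_a,s^\sharp_a)$ unless the transition matrix $\partial_{\bx^\sharp}\Phi$ is the identity. So that computation does not confirm the second and fourth identities with the same $\bpsi_0$ that appears at $\bx_0$; it would need the transported covector $(\partial_{\bx^\sharp}\Phi)^T\bpsi_0$. Those two identities hold only under the frozen-trajectory reading, whereas the first and third are genuine derivatives.
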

\begin{proof}
To prove \eqref{eq:neat-1}, it follows from basic calculus that the following is true:
\begin{equation}\label{eq:neat-1-proof}
\partial_{s_a} \left(\bx_0- \bx^\sharp_a - \int^{s_a}_{s^\sharp_a} \bg_0(\bx_0(s), s)\, ds \right)  = -\bg_0(\bx_0(s_a), s_a)
\end{equation}
From the original definition of $S_0$ given by \eqref{eq:S0=def}, we have $\bx_0(s_a) = \bx_0$.  Substituting $\bx_0$ for $\bx_0(s_a)$ in \eqref{eq:neat-1-proof} yields \eqref{eq:neat-1}.  The proof of the remainder of the equations follow similarly.
\end{proof}

\subsection{Proof of Theorem~\ref{theorem:ross}}

From Lemma~\ref{lemma:equiv}, we have the equivalence relationship, $S_0^{ode+}\times S_f^{ode+} \equiv S_0^\dagger \times S_f^\dagger$. Treating $P_a, Q_a, P_b$ and $Q_b$ in \eqref{eq:S0=equiv} and \eqref{eq:Sf=equiv} as abstract sets, the algebraic component of the endpoint Lagrangian per Lemma~\ref{lemma:TVC} (see also Remark~\ref{rem:Ebar=algOnly}) corresponding to $S_0^{ode+} \times S_f^{ode+}$ (and denoted as $\widetilde{E}$) can be written as,
\begin{multline}\label{eq:proof-TVC-0}
\widetilde{E}(\nu^0, \widetilde{\bpsi}_0, \nu_a^C, \nu_{t_0},  \widetilde{\bpsi}_f, \nu_b^C, \nu_{t_f},   \bx_0, t_0, \bp_a; \bx_f, t_f, \bp_b) :=
\nu^0\,E(\bx_0, t_0, \bp_a; \bx_f, t_f, \bp_b)\\
+  \widetilde{\bpsi}_0^T \left[\bx_0- \bx^\sharp_a - \int^{s_a}_{s^\sharp_a} \bg_0(\bx_0(s), s)\, ds \right]
+ \nu_a^C\, \phi_0(s_a,t_0) + \nu_{t_0}\, t_0\\
+\widetilde{\bpsi}_f^T \left[\bx_f - \bx^\sharp_b - \int^{s_b}_{s^\sharp_b} \bg_f(\bx_f(s), s)\, ds \right]
+ \nu_b^C\, \phi_f(s_b,t_f) + \nu_{t_f}\, t_f
\end{multline}
where, $\widetilde{\bpsi}_0 \in \real{N_x}$  and  $\widetilde{\bpsi}_f \in \real{N_x}$ are the multipliers associated with imposing the integral constraints defined in \eqref{eq:S0=equiv} and \eqref{eq:Sf=equiv} respectively.
Applying Lemma~\ref{lemma:TVC} for the first $(2N_x + 2)$ components of \eqref{eq:TVC-hyb} (i.e., the costates and Hamiltonian values at initial and final times), we get,
\begin{subequations}\label{eq:proof-TVC-1}
\begin{align}
-\blam(t_0) &= \nu^0\,\partial_{\bx_0} E(\bx_0, t_0, \bp_a; \bx_f, t_f, \bp_b) + \widetilde{\bpsi}_0 \label{eq:nu0-sol}\\
\mathcal{H}[@t_0] &= \nu^0\, \partial_{t_0}E(\bx_0, t_0, \bp_a; \bx_f, t_f, \bp_b) + \nu_a^C \partial_{t_0}\phi_0(s_a, t_0) + \nu_{t_0} \label{eq:proof-hvc-0}\\
\blam(t_f) &= \nu^0\, \partial_{\bx_f}E(\bx_0, t_0, \bp_a; \bx_f, t_f, \bp_b) + \widetilde{\bpsi}_f \label{eq:nuf-sol}\\
-\mathcal{H}[@t_f] &= \nu^0\,\partial_{t_f}E(\bx_0, t_0, \bp_a; \bx_f, t_f, \bp_b)  + \nu_b^C \partial_{t_f}\phi_f(s_b, t_f) + \nu_{t_f} \label{eq:proof-hvc-f}
\end{align}
\end{subequations}
Comparing \eqref{eq:nu0-sol} and \eqref{eq:nuf-sol} to \eqref{eq:nu0-def} and \eqref{eq:nuf-def} respectively, it follows that,
\begin{equation}\label{eq:nutilde=nu}
 \widetilde{\bpsi}_0 = \bpsi_0  \quad\text{and} \quad \widetilde{\bpsi}_f = \bpsi_f
\end{equation}
Furthermore, Eqs~(\ref{eq:proof-hvc-0}) and (\ref{eq:proof-hvc-f}) are indeed \eqref{eq:ross-hvc-0} and \eqref{eq:ross-hvc-f} respectively.
Applying Lemma~\ref{lemma:TVC} for the optimization variables $s_a$ and $s_b$, and using Lemma~\ref{lemma:neat}, we get (see \eqref{eq:sigma=bydef-2}),
\begin{subequations}\label{eq:proof-TVC-2}
\begin{align}
-\sigma_{s_a} &= \nu^0\, \partial_{s_a}E(\bx_0, t_0, s_a, \bx^\sharp_a, s^\sharp_a; \bx_f, t_f, s_b, \bx^\sharp_b, s^\sharp_b)
-\widetilde{\bpsi}_0^T\bg_0(\bx_0, s_a)
 + \nu_a^C \partial_{s_a}\phi_0(s_a, t_0) + \eta_{s_a} \label{eq:nu0-prob}\\
-\sigma_{s_b} &= \nu^0\, \partial_{s_b}E(\bx_0, t_0, s_a, \bx^\sharp_a, s^\sharp_a; \bx_f, t_f, s_b, \bx^\sharp_b, s^\sharp_b)
-\widetilde{\bpsi}_f^T\bg_f(\bx_f, s_b)
 + \nu_b^C \partial_{s_b}\phi_f(s_b, t_f) + \eta_{s_b} \label{eq:nuf-prob}
\end{align}
\end{subequations}
Substituting \eqref{eq:nutilde=nu} for $\widetilde{\bpsi}_0$ and $\widetilde{\bpsi}_f$ in  \eqref{eq:nu0-prob} and \eqref{eq:nuf-prob} yields \eqref{eq:ross-tvc-lam0} and \eqref{eq:ross-tvc-lamf} respectively.

Applying Lemma~\ref{lemma:TVC} for the remainder of the optimization variables, namely, $\bx^\sharp_a$, $s^\sharp_a$, $\bx^\sharp_b$ and  $s^\sharp_b$,  and using Lemma~\ref{lemma:neat}, we get,
\begin{subequations}\label{eq:proof-param}
\begin{align}
-\bsigma_{x^\sharp_a} &= \nu^0\, \partial_{\bx^\sharp_a}E(\bx_0, t_0, s_a, \bx^\sharp_a, s^\sharp_a; \bx_f, t_f, s_b, \bx^\sharp_b, s^\sharp_b)
-\widetilde{\bpsi}_0 + \etab_{x^\sharp_a} \\
-\sigma_{s^\sharp_a} &= \nu^0\, \partial_{s^\sharp_a}E(\bx_0, t_0, s_a, \bx^\sharp_a, s^\sharp_a; \bx_f, t_f, s_b, \bx^\sharp_b, s^\sharp_b)
+\widetilde{\bpsi}_0^T\bg_0(\bx^\sharp_a, s^\sharp_a)  + \eta_{s^\sharp_a} \\
-\bsigma_{x^\sharp_b} &= \nu^0\, \partial_{\bx^\sharp_b}E(\bx_0, t_0, s_a, \bx^\sharp_a, s^\sharp_a; \bx_f, t_f, s_b, \bx^\sharp_b, s^\sharp_b)
-\widetilde{\bpsi}_f + \etab_{x^\sharp_b} \\
-\sigma_{s^\sharp_b} &= \nu^0\, \partial_{s^\sharp_b}E(\bx_0, t_0, s_a, \bx^\sharp_a, s^\sharp_a; \bx_f, t_f, s_b, \bx^\sharp_b, s^\sharp_b)
+\widetilde{\bpsi}_f^T\bg_f(\bx^\sharp_b, s^\sharp_b)  + \eta_{s^\sharp_b}
\end{align}
\end{subequations}
Substituting \eqref{eq:nutilde=nu} in \eqref{eq:proof-param} yields \eqref{eq:ross-param}. 

\subsection{Insights on $\bpsi_0$ and $\bpsi_f$ as Weak Adjoint Covectors for the Boundary Differential Equations}

According to \eqref{eq:nutilde=nu} $\bpsi_0$  and $\bpsi_f$ are exactly the same as $\widetilde{\bpsi}_0$ and $\widetilde{\bpsi}_f$.  Hence, $\bpsi_0$  and $\bpsi_f$ are indeed the multipliers associated with imposing the integral constraints defined in \eqref{eq:S0=equiv} and \eqref{eq:Sf=equiv} respectively. Obviously, $\widetilde{\bpsi}_0$ and $\widetilde{\bpsi}_f$, and hence,  $\bpsi_0$  and $\bpsi_f$ are not the usual (i.e., strong) adjoint covectors associated with imposing differential constraints. To better understand these nuances, consider the imposition of the initial boundary differential constraint given by the second term on the right-hand-side of \eqref{eq:proof-TVC-0}.  This term can also be written as,
\begin{equation}\label{eq:interp-1}
  \widetilde{\bpsi}_0^T \left[\bx_0- \bx^\sharp_a - \int^{s_a}_{s^\sharp_a} \bg_0(\bx_0(s), s)\, ds \right] =  \int^{s_a}_{s^\sharp_a}\left(\widetilde{\bpsi}_0\odot \mathbf{1}_{[s^\sharp_a, s_a]}(\xi)\right)^T \left[\bx'_0(\xi) - \bg_0(\bx_0(\xi), \xi) \right]\, d\xi
\end{equation}
where $\mathbf{1}_{[s^\sharp_a, s_a]}(\xi)$ is the indicator function defined in \eqref{eq:indicatorDef}.  The quantity, $\widetilde{\bpsi}_0\odot \mathbf{1}_{[s^\sharp_a, s_a]}(\xi)$, in the right-hand-side of \eqref{eq:interp-1} is exactly the weak adjoint covector defined in Definition~\ref{def:weakAdj}.  Thus, Lemma~\ref{lemma:equiv} is the key result that explains the need for a weak adjoint covector. In principle, we could have proved Theorem~\ref{theorem:ross} without Lemma~\ref{lemma:equiv} by invoking Definition~\ref{def:weakAdj}.  Although this is a valid mathematical route for a proof of Theorem~\ref{theorem:ross}, we deliberately used the alternative path of starting with Lemma~\ref{lemma:equiv} because it naturally motivates \eqref{eq:interp-1} and hence Definition~\ref{def:weakAdj}.

\section{Theorem~\ref{theorem:ross-ext}: A Practical Extension of the Main Result}
Surprisingly, Theorem~\ref{theorem:ross} is not sufficiently general to solve practical problems such as the ones posed in \cite{dixon-diss-2025}.  In \cite{dixon-diss-2025}, additional inequality constraints on $(\bx_0, t_0, \bx_f, t_f)$ are imposed. Suppose we denote these additional constraints abstractly by a set $S^{add}$.  Then to solve practical problems such as the ones posed in \cite{dixon-diss-2025}, \eqref{eq:bc=ode+} must be modified to,
\begin{equation}\label{eq:bc=ode+add}
\big(\bx_0, t_0, \bp_a; \bx_f, t_f, \bp_b; \bp \big) \in  (S_0^{ode+} \times S_f^{ode+})\cap S^{add}
\end{equation}
For \eqref{eq:bc=ode+add} to be meaningful, we must have $(S_0 \times S_f)\cap S^{add} \neq \emptyset$.  That is, $S^{add}$ must not over-constrain the problem.
From Section~\ref{sec:proof} it follows that Theorem~\ref{theorem:ross} can be extended to incorporate \eqref{eq:bc=ode+add} via the normal cone to the set $(S_0^{ode+} \times S_f^{ode+})\cap S^{add}$. The computation of this normal cone is not straightforward for an abstract set $S^{add}$\cite{vinter,clarke-2013book}.  However, if $S^{add} = S^{alg}$ where $S^{alg}$ is the set described by algebraic constraints defined previously by \eqref{eq:S=efun} then from the proof of Theorem~\ref{theorem:ross} it follows that Theorem~\ref{theorem:ross} holds with $E$ replaced by $\overline{E}$ defined in \eqref{eq:Ebar=bydef}.  This result is memorialized in terms of the following extension to Theorem~\ref{theorem:ross}:
%
\begin{theorem}[Extension to Theorem~\ref{theorem:ross}]\label{theorem:ross-ext}
Let the assumptions of Theorem~\ref{theorem:ross} hold. Let Problem~$(\bP^{ode+alg})$  be defined by Problem~$(\bP)$ whose boundary conditions are given by,
\begin{equation}\label{eq:endpt4B++}
\big(\bx_0, t_0, \bp_a; \bx_f, t_f, \bp_b; \bp \big) \in  (S_0^{ode+} \times S_f^{ode+})\cap S^{alg} \neq \emptyset
\end{equation}
where $S^{alg}$ is defined by \eqref{eq:S=efun}.  Let,
\begin{equation}\label{eq:ross-ext-Ebar}
\overline{E}(\bnu; \bx_0, t_0, \bp_a; \bx_f, t_f, \bp_{b}; \bp) :=  \nu^0\,E(\bx_0, t_0, \bp_a; \bx_f, t_f, \bp_{b}; \bp) + \bnu^T \be(\bx_0, t_0, \bx_f, t_f, \bp)
\end{equation}
where $\bnu$ satisfies the complementarity condition given by \eqref{eq:complementarity} and  $(\bp_a; \bp_b):= (s_a, \bx^\sharp_a, s^\sharp_a; s_b, \bx^\sharp_b, s^\sharp_b)$ as before (Cf.~\eqref{eq:pa+pb-constr}).
Let $\bsigma$ be defined by \eqref{eq:sigma=bydef}.
Then the transversality conditions given by Eqs.~(\ref{eq:ross-tvc-lam0+f}), (\ref{eq:ross-hvc}) and (\ref{eq:ross-param}) hold with $E(\cdot)$ replaced by $\overline{E}(\cdot)$ and given explicitly by,
\begin{subequations}\label{eq:ross-ext-tvc-all}
\begin{align}
- \bpsi_0 & =  \blam(t_0) + \partial_{\bx_0} \overline{E}(\bnu; \bx_0, t_0, \bp_a; \bx_f, t_f, \bp_{b}; \bp) \label{eq:beta0-def}\\
 \bpsi_f & = \blam(t_f) - \partial_{\bx_f}\overline{E}(\bnu; \bx_0, t_0, \bp_a; \bx_f, t_f, \bp_{b}; \bp) \label{eq:betaf-def}\\
\bpsi_0^T \bg_0(\bx_0, s_a) &=
 \partial_{s_a}\overline{E}(\bnu; \bx_0, t_0, \bp_a; \bx_f, t_f, \bp_{b}; \bp)
 + \nu_a^C \partial_{s_a}\phi_0(s_a, t_0) + \eta_{s_a}+\sigma_{s_a} \label{eq:ross-ext-lam0}\\
\bpsi_f^T \bg_f(\bx_f, s_b) &=
 \partial_{s_b}\overline{E}(\bnu; \bx_0, t_0, \bp_a; \bx_f, t_f, \bp_{b}; \bp)
 + \nu_b^C \partial_{s_b}\phi_f(s_b, t_f) + \eta_{s_b} + \sigma_{s_b} \label{eq:ross-ext-lamf}\\
\mathcal{H}[@t_0] &= \partial_{t_0}\overline{E}(\bnu; \bx_0, t_0, \bp_a; \bx_f, t_f, \bp_{b}; \bp) + \nu_{t_0} + \nu_a^C \partial_{t_0}\phi_0(s_a, t_0) \label{eq:ross-ext-hvc-0}\\
- \mathcal{H}[@t_f] &=  \partial_{t_f}\overline{E}(\bnu; \bx_0, t_0, \bp_a; \bx_f, t_f, \bp_{b}; \bp) + \nu_{t_f} + \nu_b^C \partial_{t_f}\phi_f(s_b, t_f) \label{eq:ross-ext-hvc-f}\\
 \partial_{\bx^\sharp_a}\overline{E}(\bnu; \bx_0, t_0, \bp_a; \bx_f, t_f, \bp_{b}; \bp)  &=
\bpsi_0 - \etab_{x^\sharp_a} - \bsigma_{x^\sharp_a} \label{eq:ross-ext-param-a1}\\
 \partial_{s^\sharp_a}\overline{E}(\bnu; \bx_0, t_0, \bp_a; \bx_f, t_f, \bp_{b}; \bp) &=
- \left(\bpsi_0^T\bg_0(\bx^\sharp_a, s^\sharp_a) + \eta_{s^\sharp_a} + \sigma_{s^\sharp_a}\right) \label{eq:ross-ext-param-a2}\\
\partial_{\bx^\sharp_b}\overline{E}(\bnu; \bx_0, t_0, \bp_a; \bx_f, t_f, \bp_{b}; \bp)  &=
\bpsi_f - \etab_{x^\sharp_b} - \bsigma_{x^\sharp_b}\\
\partial_{s^\sharp_b}\overline{E}(\bnu; \bx_0, t_0, \bp_a; \bx_f, t_f, \bp_{b}; \bp) &=
-\left(
\bpsi_f^T\bg_f(\bx^\sharp_b, s^\sharp_b)  + \eta_{s^\sharp_b} \right)\\
-\bsigma & = \partial_{\bp} \overline{E}(\bnu; \bx_0, t_0, \bp_a; \bx_f, t_f, \bp_{b}; \bp)\label{eq:ross-ext-p}
\end{align}
\end{subequations}
\end{theorem}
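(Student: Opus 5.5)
The proof will follow the proof of Theorem~\ref{theorem:ross} step by step, with one extra block of algebraic constraints added. By Lemma~\ref{lemma:equiv}, the set $(S_0^{ode+}\times S_f^{ode+})\cap S^{alg}$ can be replaced by $(S_0^\dag\times S_f^\dag)\cap S^{alg}$. All constraints of this set are then either algebraic (equality or inequality) or abstract membership constraints on parameters. The two integral constraints are algebraic in $(\bx_0,s_a,\bx^\sharp_a,s^\sharp_a)$ and $(\bx_f,s_b,\bx^\sharp_b,s^\sharp_b)$. The constraints $\phi_0=0$ and $\phi_f=0$ are algebraic equalities. The constraints $t_0\in I_0$ and $t_f\in I_f$ are box inequalities. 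The constraints $\be^L\le\be\le\be^U$ are algebraic inequalities. The memberships $P_a$, $P_b$, $Q_a$, $Q_b$ are abstract sets. So the whole set has the form $\widetilde S$ in \eqref{eq:S=hyb}. Here $\widetilde{\bp}:=(\bp_a,\bp_b,\bp)$, the algebraic vector $\be$ is enlarged to include the integral, clock-coordination and time-interval constraints, and $S_{\widetilde p}:=Q_a\times P_a\times Q_b\times P_b\times\real{N_p}$, or the constraint set of $\bp$ if one is present.

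Next, I will apply Lemma~\ref{lemma:TVC} to this set. The algebraic part of the endpoint Lagrangian is exactly $\widetilde E$ from \eqref{eq:proof-TVC-0} plus the term $\bnu^T\be(\bx_0,t_0,\bx_f,t_f,\bp)$. Equivalently, it is \eqref{eq:proof-TVC-0} with $\nu^0E$ replaced by $\overline E$ from \eqref{eq:ross-ext-Ebar}. The complementarity conditions \eqref{eq:complementarity} for $\bnu$ are the ones the lemma supplies. The normal-cone vector $\etab_{\widetilde p}$ splits componentwise into $(\eta_{s_a},\etab_{x^\sharp_a},\eta_{s^\sharp_a},\eta_{s_b},\dots)$, since the normal cone of a Cartesian product is the product of the normal cones.

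I then read off the components of \eqref{eq:TVC-hyb} as in the proof of Theorem~\ref{theorem:ross}. The $\bx_0$ and $\bx_f$ components give \eqref{eq:beta0-def} and \eqref{eq:betaf-def}, after identifying $\widetilde{\bpsi}_0=\bpsi_0$ and $\widetilde{\bpsi}_f=\bpsi_f$. The $t_0$ and $t_f$ components give the Hamiltonian value conditions. The $s_a$, $s_b$, $\bx^\sharp$ and $s^\sharp$ components, with Lemma~\ref{lemma:neat}, give the remaining equations in \eqref{eq:ross-ext-tvc-all}. The $\bp$-component is new and gives \eqref{eq:ross-ext-p}: if $\bp$ is free, this is the Hofer--Sagirow condition; otherwise a normal-cone term is added. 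Because $\be$ does not depend on $\bp_a$ or $\bp_b$, the partial derivatives $\partial_{\bp_a}\overline E$ and $\partial_{\bp_b}\overline E$ reduce to $\nu^0\partial E$ there. The $\sigma$-terms therefore carry over unchanged from \eqref{eq:sigma=bydef-2}, and $\bsigma$ for $\bp$ itself comes from \eqref{eq:sigma=bydef}, now with $\bff$ possibly depending on $\bp$.

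The main obstacle is justifying the use of Lemma~\ref{lemma:TVC} on the intersection. This needs a constraint qualification so that the normal cone of the intersection splits into the sum of the normal cones and the multiplier $\nu^0$ is not lost. Without it, only the abnormal form holds. I would state a Mangasarian--Fromovitz-type regularity condition at the optimal point, or work with the nonsmooth multiplier rule of the cited references, which yields the result with $(\nu^0,\bnu,\dots)$ not all zero.
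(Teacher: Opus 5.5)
Your route is the paper's own: Lemma~\ref{lemma:equiv}, then Lemma~\ref{lemma:TVC} with $\nu^0E$ replaced by $\overline{E}$, then Lemma~\ref{lemma:neat} to read off components. Your constraint-qualification caveat is not actually needed, because the statement allows $\nu^0=0$ (Fritz--John form).

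\textbf{The failing step.} The step that fails is one you inherit from the paper: reading off the $\bx^\sharp_a$, $s^\sharp_a$ (and $\bx^\sharp_b$, $s^\sharp_b$) components. You correctly observe that the integral constraint is an algebraic function of $(\bx_0,s_a,\bx^\sharp_a,s^\sharp_a)$. It is $\bx_0-\varphi_0(s_a;\bx^\sharp_a,s^\sharp_a)$, where $\varphi_0$ is the flow of $\bg_0$. The integrand $\bg_0(\bx_0(s),s)$ is evaluated along the solution through the phantom point, so it depends on $(\bx^\sharp_a,s^\sharp_a)$.

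Its partial derivatives in the phantom variables are therefore $-\Phi_a$ and $\Phi_a\bg_0(\bx^\sharp_a,s^\sharp_a)$. Here $\Phi_a:=\partial\varphi_0/\partial\bx^\sharp_a$ is the state-transition matrix of the linearized boundary ODE from $s^\sharp_a$ to $s_a$. They are not $-I$ and $\bg_0(\bx^\sharp_a,s^\sharp_a)$, as used in Lemma~\ref{lemma:neat} and Eq.~(\ref{eq:proof-param}).

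The $\bx_0$- and $s_a$-derivatives ($I$ and $-\bg_0(\bx_0,s_a)$) are right. So Eqs.~(\ref{eq:beta0-def})--(\ref{eq:ross-ext-hvc-f}) and (\ref{eq:ross-ext-p}) do follow from your argument. At the phantom point, however, the argument actually yields $\partial_{\bx^\sharp_a}\overline{E}=\Phi_a^T\bpsi_0-\etab_{x^\sharp_a}-\bsigma_{x^\sharp_a}$ and $\partial_{s^\sharp_a}\overline{E}=-\big(\bpsi_0^T\Phi_a\bg_0(\bx^\sharp_a,s^\sharp_a)+\eta_{s^\sharp_a}+\sigma_{s^\sharp_a}\big)$. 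The same holds at $b$ with $\Phi_b$ and $\bpsi_f$. In other words, $\bpsi_0$ must be transported back to $s^\sharp_a$ by the adjoint of the boundary ODE. The stated equations are recovered only when $\Phi_a=I$, e.g.\ when $\bg_0$ is independent of $\bx$, or $\bg_0\equiv\bzero$ as in Corollary~\ref{corr:1}.

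\textbf{A scalar counterexample.} The discrepancy is not cosmetic. Take:
\begin{itemize}
\item dynamics $\dot x=u$ with $F=u^2/2$, $t_0=0$ and $t_f=1$ fixed, and $x_f=0$ (say $\bg_f\equiv\bzero$, $P_b=\{(0,0)\}$);
\item $E=\tfrac12(x^\sharp_a-1)^2$ and $\bg_0(x,s)=x$;
\item $P_a=\Real\times\{0\}$, $Q_a=\{1\}$, $\phi_0\equiv 0$;
\item $S^{alg}$ inactive, so $\bnu=\bzero$.
\end{itemize}
Then $x_0=\mathrm{e}\,x^\sharp_a$, and the cost reduces to $\tfrac12(x^\sharp_a-1)^2+\tfrac12\mathrm{e}^2(x^\sharp_a)^2$, whose minimizer is $x^\sharp_a=1/(1+\mathrm{e}^2)$. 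The problem is normal: an abnormal multiplier forces $\lambda\equiv0$ and then all multipliers vanish. We get $\lambda\equiv x_0$, so $\bpsi_0=-x_0$, while $\etab_{x^\sharp_a}=0$ and $\bsigma_{x^\sharp_a}=0$.

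Equation~(\ref{eq:ross-ext-param-a1}) as stated would force $x^\sharp_a-1=-\mathrm{e}\,x^\sharp_a$, i.e.\ $x^\sharp_a=1/(1+\mathrm{e})$. The version with $\Phi_a^T\bpsi_0=\mathrm{e}\,\bpsi_0$ returns the true minimizer. The missing ingredient is the sensitivity of the boundary flow to the phantom initial condition. No constraint qualification can repair those four equations as written.
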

The proof of Theorem~\ref{theorem:ross-ext} is fairly straightforward in terms of repeating all the steps delineated in Section~\ref{sec:proof}.
\begin{remark}
Because the constraints on $t_0$ and $t_f$ are already included in the definition of $S_0^{ode+}$ and $S_f^{ode+}$ respectively, it seems superfluous to include additional constraints on these clock times via $S^{alg}$ in \eqref{eq:endpt4B++}.  In fact, at first glance, it appears that including additional clock-time constraints in $S^{alg}$ might lead to the condition $(S_0^{ode+} \times S_f^{ode+})\cap S^{alg} = \emptyset$.  In principle, this is true, which is why we assume $(S_0^{ode+} \times S_f^{ode+})\cap S^{alg} \neq \emptyset$.  However, note that the clock-time constraints required in the definitions of $S_0^{ode+}$ and $S_f^{ode+}$ are independent of each other.  The set $S^{alg}$ allows $t_0$ and $t_f$ to be jointly constrained (via the endpoint function $\be$) thereby adding an additional layer of practicality.  In fact, in \cite{dixon-diss-2025} the flight time $t_f -t_0$ is constrained in one of the problem definitions.  This flight-time constraint cannot be imposed by separately constraining the clock times $t_0$ and $t_f$ in the case of the elliptic, restricted three-body problem and hence also the general $N$-body problem.
\end{remark}

\section{Conclusions}

Conventionally, boundary conditions are defined in terms of algebraic equations. The main problem with this convention is that it assumes integrals of motion, equilibria, or some other special reductions of the differential equations. Per the illustrative three-body problem, it is apparent that there was a fundamental gap in the literature in framing boundary conditions in forms other than algebraic inequalities or abstract sets. To the best of the author's knowledge, this is the first paper to explore and define boundary conditions in terms of differential equations. Once the unconventional idea of boundary differential equations is accepted, the main task is to formulate this mathematical problem. As shown in this paper, formulating this new problem requires one to distinguish and separate local time-like variables and the transfer clock times. Because these quantities are parameters in an optimal control problem, Pontryagin's original theorem rectified by subsequent researchers forms the foundation for the development of the new transversality conditions.  The concept of weak adjoint covectors associated with the boundary differential equations is indispensable to the production of the new transversality conditions.  Per Theorem~\ref{theorem:ross} and its corollaries developed in this paper,  the new transversality conditions can be easily computed because it is stated in terms of the vector field that defines the boundary differential equations. Consequently, they can be used in a computational setting to validate the extremality of a computed solution. It turns out that many practical problems require boundary conditions to be naturally stipulated in terms of both algebraic and differential equations rather than one or the other. This is what makes Theorem~\ref{theorem:ross-ext} the most practical of all the results developed in this paper.





\end{document}